\numberwithin{equation}{section}
\theoremstyle{plain}
\newtheorem{thm}{Theorem}[section]
\newtheorem{lemme}[thm]{Lemma}
\newtheorem{prop}[thm]{Proposition}
\newtheorem{cor}[thm]{Corollary}
\newtheorem{defi}{Definition}
\theoremstyle{remark}
\newtheorem{rque}{Remark}
\theoremstyle{definition}
\newcommand{\rond}{\mathcal}
\newcommand{\id}{ \mathfrak}
\newcommand{\pt}{\cdot}
\newcommand{\pts}{\ldots}
\newcommand{\Z}{\mathbb{Z}}
\newcommand{\Q}{\mathbb{Q}}
\newcommand{\R}{\mathbb{R}}
\newcommand{\C}{\mathbb{C}}
\newcommand{\ec}{\textnormal}
\newcommand{\tq}{\vert}
\newcommand{\congru}{\equiv}
\newcommand{\barre}{\overline }
\newcommand{\norme}{\Vert}
\renewcommand{\mod}[1]{{\ifmmode\text{\rm\ (mod~$#1$)}\else\discretionary{}{}{\hbox{ }}\rm(mod~$#1$)\fi}}
\newcommand{\sumstar}{\sideset{}{^*}\sum}
\newcommand{\sumflat}{\sideset{}{^\natural}\sum}
\begin{document}

\title{A quadratic large sieve inequality over number fields}

\author{Leo Goldmakher}
\address{Department of Mathematics, University of Toronto, Toronto, ON, Canada} \email{leo.goldmakher@utoronto.ca}
\author{Beno\^it Louvel}
\address{Mathematisches Institut G\"ottingen \\
Bunsenstr. 3-5 \\ 37073 G\"ottingen, Germany}
\email{blouvel@uni-math.gwdg.de}

\thanks{First author partially supported by an NSERC grant. Second author supported by the Volkswagen Fundation.}
\keywords{}
\begin{abstract} We formulate and prove a large sieve inequality for quadratic characters over a number field. To do this, we introduce the notion of an $n$-th order Hecke family. We develop the basic theory of these Hecke families, including versions of the Poisson summation formula.
\end{abstract}
\subjclass[2010]{}
\maketitle
\setcounter{page}{1}

\section{Introduction}\label{sec:intro}

In \cite{HB}, Heath-Brown proved a large sieve inequality for quadratic characters:
\begin{equation}\label{intro:eq:1}
\sumflat_{M<a\le 2M} \Big\vert \sumflat_{N<b\le 2N} \lambda_b \left(\frac{a}{b}\right)\Big\vert^2 \ll (MN)^\varepsilon (M+N) \sumflat_{N< b\le 2N} \tq\lambda_b\tq^2 .
\end{equation}
Here $(\lambda_b)$ is any sequence of complex numbers, $\varepsilon > 0$, $M, N \geq 1$, $(\pt/\pt)$ is the Jacobi symbol, and the sums are restricted to odd squarefree integers. This bound has proved to be extremely useful in applications, and one might wish to generalize it. Heath-Brown \cite{HB1} has proved an analogue of (\ref{intro:eq:1}) with cubic characters, and in joint work with Blomer \cite{BGL}, the authors have proved an analogue for characters of arbitrary order.\\

Our goal in the present work is to generalize (\ref{intro:eq:1}) in a different direction, by extending it to number fields. The only such generalization we are aware of is a recent result of Onodera \cite{O}, who proves a quadratic large sieve for $\Q(i)$. The apparent neglect of this natural problem can be largely attributed to  the difficulty of formulating the proper number field generalization. More precisely, it is not clear what an appropriate analogue of the Jacobi symbol is over a number field. The most obvious candidate, the power residue symbol, is not suitable: any analogue of (\ref{intro:eq:1}) requires a symbol admitting integral ideal inputs, while the power residue symbol $(a / \id{b})$ is defined for number field elements $a$ and ideals $\id{b}$ and it is not obvious how to extend the top entry to ideals. (We give a brief description of the power residue symbol in Section \ref{sec:app2}.) To get around this, we introduce the notion of an \emph{$n$-th order Hecke family}. For a number field $k$ with ring of integers $\mathcal{O}$, let $I(\id{a})$ denote the set of integral ideals coprime to a given integral ideal $\id{a}$, and let $\rond{N}(\id{a})$ be the absolute norm of $\id{a}$. Given a Hecke character $\chi$ modulo $\id{f}$, the infinite type of $\chi$ is defined to be the unique character $\chi_\infty$ on $k\otimes_\Q\R$ satisfying $\chi\big((x)\big) = \chi_\infty(x)$ for every $x\in \mathcal{O}$ with $x\congru 1 \mod{\id{f}}$.

\begin{defi} 
\label{defi:HeckeFamily}
Given $n \geq 2$, let $k$ be a number field containing the group $\mu_n$ of $n$-th roots of unity. An \textbf{$n$-th order Hecke family} (with respect to a fixed ideal $\id{c}$) is a collection 
\[
\rond{F} = \rond{F}_\id{c} = 
	\{\chi_\id{a}:\id{a}\in I(\id{c}), \id{a} \ec{ squarefree}\}
\]
of primitive Hecke characters of trivial infinite type, satisfying the following three properties:
\begin{enumerate}

\item the order of $\chi_\id{a}$ divides $n$ for every character $\chi_\id{a} \in \rond{F}$\textup{;}

\item $\rond{F}$ satisfies a reciprocity law of the form: there exists a finite group $G$, a homomorphism $[\pt]$ from $I(\id{c})$ to $G$, and a map $C : G \times G \to \mu_n$ such that
        \begin{equation}\label{eq:Hecke:recip}
            \chi_\id{a}(\id{b}) = \chi_\id{b}(\id{a}) \ C([\id{a}],[\id{b}])
        \end{equation}
    for all coprime ideals $\id{a}, \id{b} \in I(\id{c})$\textup{;} and 
\item for all coprime ideals $\id{a}, \id{b} \in I(\id{c})$ satisfying $[\id{a}]=[\id{b}]$, $\chi_\id{a}\barre{\chi}_\id{b}$ is a primitive Hecke character modulo $\id{ab}$.
\end{enumerate}
\end{defi}

\begin{rque}
Property (3) generalizes the following property of the Jacobi symbol: if $a$ and $b$ are positive odd coprime  integers, then $(ab/\pt)$ is a Dirichlet character modulo $ab$ if $a \congru b \mod{4}$. 
This property will play an essential role in our argument; see \eqref{eq:defB3} and Section \ref{subsec:explicitformula-bis}. 
\end{rque}

\begin{rque}\label{rk2}
The ideal $\id{c}$ in an $n$-th order Hecke family plays the same role as the modulus 4 in quadratic reciprocity.
\end{rque}

\begin{rque}\label{rk}
Note that if $\{\chi_\id{a} : \id{a}\in I(\id{c})\}$ is an $n$-th order Hecke family, then the set $\{\chi_\id{a}^{n/d} : \id{a}\in I(\id{c})\}$ is a $d$-th order Hecke family for any non-trivial divisor $d$ of $n$.
\end{rque}

It is not clear \emph{a priori} that any such family exists. An example of a quadratic Hecke family (indeed, the motivating example) was constructed by Fisher and Friedberg in \cite{FF} -- see Section \ref{sec:app2} for a brief description of their work. Their construction is quite natural, and can be readily extended to produce Hecke families of any order; 
Remark \ref{rk} then indicates how to modify their construction to produce other Hecke families. It is an interesting question to determine whether all Hecke families are thus induced from the Fisher-Friedberg family.\\

With this notation in hand, we can now state our main result:
\begin{thm}\label{thm1}
Let $\id{c}$ be an integral ideal of a number field $k$, and let $\{\chi_\id{a}\}$ be a quadratic Hecke family with respect to $\id{c}$. Given any $\varepsilon > 0$, $M, N \geq 1$, and a sequence $(\lambda_\id{b})$ of complex numbers parametrized by integral ideals of $k$, we have
\[
\sumstar_{\rond{N}\id{a}\le M} \Big\vert \sumstar_{\rond{N}\id{b}\le N} \lambda_\id{b} \chi_\id{b}(\id{a})\Big\vert^2
\ll_{k,\id{c},\varepsilon}
(MN)^\varepsilon (M+N) \sumstar_{\rond{N}\id{b}\le N} \tq \lambda_\id{b}\tq^2 .
\]
Here and henceforth $\sumstar$indicates that the sum is restricted to squarefree ideals of $I(\id{c})$.
\end{thm}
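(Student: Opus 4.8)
The strategy is to follow Heath--Brown's treatment of \eqref{intro:eq:1}, with the Jacobi symbol replaced throughout by the axioms of Definition~\ref{defi:HeckeFamily} and with classical Poisson summation replaced by a Poisson summation formula for Hecke families, to be developed below. First I would pass to a smooth, balanced form of the inequality. By the duality principle for the large sieve, combined with \eqref{eq:Hecke:recip} (used to trade $\chi_\id{b}(\id{a})$ for $\chi_\id{a}(\id{b})$ up to a root of unity depending only on $[\id{a}],[\id{b}]$, and grouping the variables by $G$-class), the left-hand side is comparable, up to $O_{k,\id{c}}(1)$, to the same quantity with $M$ and $N$ interchanged; so we may assume $M\le N$ and it suffices to prove the bound with $M+N$ replaced by $N$. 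Replacing $\mathbf 1_{\rond N\id{a}\le M}$ by a fixed smooth majorant $W(\rond N\id{a}/M)$ and opening the square, we must bound
\[
\sumstar_{\id{b}_1,\id{b}_2}\lambda_{\id{b}_1}\barre\lambda_{\id{b}_2}\sumstar_{\id{a}}\chi_{\id{b}_1}(\id{a})\chi_{\id{b}_2}(\id{a})\,W(\rond N\id{a}/M),
\]
where the conjugation has been dropped because a quadratic character is real-valued. The diagonal terms $\id b_1=\id b_2$ contribute $\ll M\sumstar|\lambda_\id{b}|^2\le N\sumstar|\lambda_\id{b}|^2$, since $\chi_\id{b}(\id{a})^2=1$ when $(\id{a},\id{b})=1$; this accounts for the term $M+N$.

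The heart of the matter is the off-diagonal, $\id b_1\ne\id b_2$. Put $\id d=\gcd(\id b_1,\id b_2)$, so that $\id q:=\id b_1\id b_2\id d^{-2}$ is a squarefree ideal of $I(\id c)$ distinct from $\mathcal O$, and (one checks from Definition~\ref{defi:HeckeFamily}(3), or directly from the Fisher--Friedberg example) $\id q$ carries a primitive character $\psi$ of conductor exactly $\id q$ --- indeed a product of two members of $\rond F$. Using \eqref{eq:Hecke:recip} together with the fact that $\chi_\id{a}$ is quadratic (so $\chi_\id{a}(\id d)^2=1$), one rewrites, after grouping $\id a$ by its class $[\id a]=g\in G$,
\[
\chi_{\id b_1}(\id a)\chi_{\id b_2}(\id a)=\psi(\id a)\,v\big(g;[\id b_1],[\id b_2]\big),
\]
where, for each fixed $\id d$, $v$ is a root of unity factoring as a product of a function of $(g,[\id b_1])$ and a function of $(g,[\id b_2])$. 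Thus the inner sum is a bounded combination of honest character sums $\sumstar_{[\id a]=g}\psi(\id a)W(\rond N\id a/M)$ attached to the \emph{fixed} primitive nontrivial character $\psi$.

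Now I would apply the Poisson summation formula for Hecke families to each such sum. As in the rational case, the sum is negligible unless $\rond N\id q\gg M^{1-\varepsilon}$, and in the surviving range it equals a dual sum over $\id h\ne 0$ of length $\rond N\id h\ll\rond N\id q/M$, with Gauss-sum weights; crucially, because $\psi$ is \emph{quadratic}, those Gauss sums have the shape $\psi(\id h)\,\rond N(\id q)^{1/2}\,(\text{root of unity})$, so a value of the \emph{same} character reappears, and unwinding \eqref{eq:Hecke:recip} once more expresses $\psi(\id h)$ as $\chi_{\id b_1}(\id h)\chi_{\id b_2}(\id h)$ times class-roots-of-unity. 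Feeding this back, treating the common factor $\id d$ by an inner divisor sum, dyadically localizing $\rond N\id b_1\asymp\rond N\id b_2\asymp B$, and pulling the surviving roots of unity out by Cauchy--Schwarz over $G$-classes, one finds that the off-diagonal is bounded by a bounded number of expressions of the form
\[
\frac MB\sumstar_{\rond N\id h\ll B^2/M}\Big|\sumstar_{\rond N\id b\asymp B}\mu_\id{b}\,\chi_\id{b}(\id h)\Big|^2,\qquad \sumstar|\mu_\id{b}|^2\ll(MN)^\varepsilon\sumstar|\lambda_\id{b}|^2 .
\]
But the inner double sum is precisely the left-hand side of Theorem~\ref{thm1}, now with outer variable $\id h$ (of norm up to $B^2/M$) and inner variable of norm $\asymp B\le N$. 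Estimating it by the inequality itself at this smaller scale gives $\ll(MN)^\varepsilon\bigl(\tfrac{B^2}{M}+B\bigr)\sumstar|\mu_\id{b}|^2$, so the off-diagonal contribution is $\ll(MN)^\varepsilon\,\frac MB\bigl(\tfrac{B^2}{M}+B\bigr)\sumstar|\lambda_\id{b}|^2=(MN)^\varepsilon(B+M)\sumstar|\lambda_\id{b}|^2\ll(MN)^\varepsilon N\sumstar|\lambda_\id{b}|^2$, as required. The whole argument is thus organized as an induction.

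The step I expect to be the main obstacle is arranging this induction so that it genuinely closes: the Poisson-dual variable $\id h$ ranges up to $N^2/M$, which is \emph{larger} than $N$ when $M<N$, so one must track the sizes of all auxiliary parameters carefully (treating, say, $M\le\sqrt N$ and $M>\sqrt N$ separately, or inducting on a suitable combination of $M$ and $N$). The other delicate point is the handling of $\id d=\gcd(\id b_1,\id b_2)$ in the total \emph{absence} of any multiplicativity of $\rond F$ in its index; it is exactly this that forces the repeated passage back and forth through the reciprocity law, and these shuffles are harmless only because the family is quadratic ($\chi_\id{a}(\id d)^2=1$, and $\barre\psi=\psi$ again lies in $\rond F$). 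A secondary but substantial task is to establish the Poisson summation formula for Hecke families with an explicit enough main term to read off the quadratic Gauss-sum evaluation used above; for families of order $n\ge 3$ the relevant Gauss sum involves $\barre\psi=\psi^{\,n-1}$, which need not belong to $\rond F$, and this is the point at which the present method is genuinely special to $n=2$.
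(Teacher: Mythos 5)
Your high-level plan is the correct one --- it is exactly the Heath-Brown strategy the paper follows: smooth, open the square, isolate the diagonal, apply Poisson to the off-diagonal, interpret the dual sum via reciprocity as a sum of the original shape, and recurse --- but you have left open, or slid past, the three points that make the proof actually work, and the paper devotes nearly all its length to precisely these.

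First, and most seriously, you apply ``Poisson summation'' directly to $\sumstar_{\id{a}}\chi_{\id{b}_1}(\id{a})\chi_{\id{b}_2}(\id{a})W(\rond{N}\id{a}/M)$, which is a sum over \emph{squarefree} ideals. There is no Poisson formula for such a restricted sum, and this is not a technicality: as the paper notes, dropping the squarefree restriction makes the theorem simply false (test against $\lambda_\id{b}$ supported on squares). The paper's device is to replace the squarefree condition by the weaker condition $s(\id{a})>K$ (norm of the squarefree part exceeds $K$), defining $B_2$ and $B_3$ in \eqref{eq:defB2}--\eqref{eq:defB3}, so that the remaining square part can be summed freely; this is where Lemma~\ref{lemme:Poisson-coprim} and the dyadic decomposition in Section~\ref{subsec:explicitformula} come in. Your proposal has no mechanism for this, and I do not see how to repair it without essentially reinventing the $B_2/B_3$ machinery.

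Second, you acknowledge but do not resolve the issue of closing the recursion. Applying the theorem ``at a smaller scale'' as you describe does not obviously terminate, since the dual variable ranges up to $N^2/M>N$ when $M<N$. The paper's solution is a quantitative bootstrap: it proves the self-improving implication $(E_\alpha)\Rightarrow(E_{2-1/\alpha})$ (Theorem~\ref{thm2}, via Theorem~\ref{cor} and Proposition~\ref{prop}), seeds it with the elementary bound $(E_2)$ (Lemma~\ref{lemme:initial-bound}), and iterates to approach $(E_1)$. Without some such monotone quantity you have a circularity, not an induction, and finding the right iteration scheme is a real part of the proof.

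Third, when $\id{d}=\gcd(\id{b}_1,\id{b}_2)\neq(1)$ you assert that $\chi_{\id{b}_1}\chi_{\id{b}_2}$ is induced by a primitive character of conductor exactly $\id{q}=\id{b}_1\id{b}_2\id{d}^{-2}$. This is not a consequence of Definition~\ref{defi:HeckeFamily}: Property~(3) guarantees primitivity modulo $\id{ab}$ only for \emph{coprime} $\id{a},\id{b}$ in the same $G$-class, and the abstract family is not assumed multiplicative in its index (that is a special feature of Fisher--Friedberg). The paper sidesteps this by sorting the off-diagonal according to $\id{g}=\gcd(\id{b}_1,\id{b}_2)$ first, replacing $\id{b}_i$ by $\id{b}_i/\id{g}$, and only then forming $\chi_{\id{b}_1}\chi_{\id{b}_2}$ for coprime indices --- see the definition of $\Sigma_3$ in \eqref{eq:defB3} and Lemma~\ref{lemme:B2-B3}. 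Your ``repeated passage through the reciprocity law'' remark points in the right direction, but as written it does not give a primitive character of the claimed conductor, and the Poisson step depends on knowing that conductor.

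Finally, a smaller point: you present the two main terms of Poisson (the constant term and the one from $\int W$) as negligible, whereas in the paper's formulation there are two closely matching main terms $T$ and $T'$, and the key computation (Lemma~\ref{lemme:main}) is the near-cancellation between them, with the residue controlled by $B_1$ at the scale $(K(\rond{N}\id{g})^2,N)$. Your sketch treats the main term too casually to see this cancellation.
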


\noindent This generalizes Heath-Brown's result \cite{HB} ($k=\Q$) and Onodera's result \cite{O} ($k=\Q(i)$). We record the following consequence, which plays an important role in \cite{BGL}:
\begin{cor}
Given $n\ge 3$ even, let $k$ be a number field containing the group $\mu_n$ of $n$-th roots of unity. 
If $\{\chi_\id{a}: \id{a}\in I(\id{c})\}$ is an $n$-th order Hecke family and $(\lambda_\id{b})$ is a sequence of complex numbers parametrized by integral ideals of $k$, we have
\[
\sumstar_{\rond{N}\id{a}\le M} \Big\vert \sumstar_{\rond{N}\id{b}\le N} \lambda_\id{b} \chi_\id{b}^{n/2}(\id{a})\Big\vert^2
\ll_{k,\id{c},\varepsilon}
(MN)^\varepsilon (M+N) \sumstar_{\rond{N}\id{b}\le N} \tq \lambda_\id{b}\tq^2
\]
for all $\varepsilon > 0$ and all $M,N \geq 1$.
\end{cor}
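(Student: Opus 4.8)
The plan is to deduce the Corollary directly from Theorem~\ref{thm1} via Remark~\ref{rk}. Since $n \geq 3$ is even, $d = 2$ is a non-trivial divisor of $n$, so Remark~\ref{rk} tells us that if $\{\chi_\id{a} : \id{a}\in I(\id{c})\}$ is an $n$-th order Hecke family, then $\{\chi_\id{a}^{n/2} : \id{a}\in I(\id{c})\}$ is a $2$-nd order (i.e.\ quadratic) Hecke family with respect to the same ideal $\id{c}$. Writing $\psi_\id{a} := \chi_\id{a}^{n/2}$, the collection $\{\psi_\id{a}\}$ is therefore a quadratic Hecke family with respect to $\id{c}$.

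First I would verify that $\{\psi_\id{a}\}$ genuinely satisfies all of Definition~\ref{defi:HeckeFamily} for $n=2$: the characters remain primitive of trivial infinite type and have order dividing $2$ (property (1)); raising the reciprocity relation \eqref{eq:Hecke:recip} to the power $n/2$ gives $\psi_\id{a}(\id{b}) = \psi_\id{b}(\id{a})\, C([\id{a}],[\id{b}])^{n/2}$, and since $\mu_n^{n/2} \subseteq \mu_2$, the map $(g,h) \mapsto C(g,h)^{n/2}$ is a valid $\mu_2$-valued cocycle for the same group $G$ and homomorphism $[\pt]$ (property (2)); and property (3) for $\{\psi_\id{a}\}$ is exactly the content of Remark~\ref{rk} as applied there. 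This is the only place one needs to be slightly careful, but it is precisely the assertion of Remark~\ref{rk}, which I may assume.

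Next I would simply apply Theorem~\ref{thm1} to the quadratic Hecke family $\{\psi_\id{a}\}$ with the same ideal $\id{c}$, the same sequence $(\lambda_\id{b})$, and the same parameters $\varepsilon > 0$ and $M, N \geq 1$. This yields
\[
\sumstar_{\rond{N}\id{a}\le M} \Big\vert \sumstar_{\rond{N}\id{b}\le N} \lambda_\id{b} \psi_\id{b}(\id{a})\Big\vert^2
\ll_{k,\id{c},\varepsilon}
(MN)^\varepsilon (M+N) \sumstar_{\rond{N}\id{b}\le N} \tq \lambda_\id{b}\tq^2 ,
\]
and substituting back $\psi_\id{b} = \chi_\id{b}^{n/2}$ gives exactly the claimed inequality. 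There is no real obstacle here: the entire content of the Corollary is absorbed into Theorem~\ref{thm1} once Remark~\ref{rk} is in place, so the ``hard part'' was already done in proving the theorem. The only thing worth a sentence is noting that the implied constant depends on $\id{c}$ and $k$ but not on $n$ in any essential way beyond what is hidden in the family, which is consistent with the statement.
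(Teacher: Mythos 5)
Your proof is correct and is exactly the paper's intended route: Remark~\ref{rk} with $d=2$ produces a quadratic Hecke family $\{\chi_\id{a}^{n/2}\}$ (with the same ideal $\id{c}$, group $G$, and projection $[\pt]$), and Theorem~\ref{thm1} applied to this family then yields the Corollary verbatim. The one small imprecision in your optional verification is the claim that $\chi_\id{a}^{n/2}$ ``remains primitive'' --- in general its conductor may properly divide that of $\chi_\id{a}$, so one must pass to the primitive character inducing it --- but this is already absorbed into the statement of Remark~\ref{rk}, which you rightly take as given.
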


We end this introduction by giving a short overview of the organization of the paper. In Section~\ref{sec:app2} we describe an example of an $n$-th order Hecke family. In Section \ref{sec:app}, we develop some necessary summation formulas over number fields. Section \ref{sec:overview} is devoted to reducing Theorem \ref{thm1} to a recursive estimate, Theorem \ref{cor}. This theorem is then reduced further in Section \ref{sec:Reduction} to an upper bound, stated in Proposition \ref{prop}. This proposition is proved in the final two sections of the paper. 

\noindent\\
\textbf{Acknowledgments.}
The authors are grateful to Valentin Blomer for his encouragement, and to the anonymous referee for helpful suggestions. The first author would also like to thank the mathematics department at the University of G\"ottingen for their hospitality.

\section{The Fisher-Friedberg Hecke family}\label{sec:app2}

As discussed in the introduction, an example of a quadratic Hecke family was first given by a construction of Fisher and Friedberg \cite{FF}. This was later extended to Hecke families of all orders by Friedberg, Hoffstein, and Lieman in \cite{FHL}. In this section, we give a brief description of their work. \\

Let $n\ge 2$ be a fixed integer and let $k$ be a number field containing the group $\mu_n$ of $n$-th roots of unity. Let $\rond{O}$ be the ring of integers of $k$. For each place $v$ of $k$, let $k_v$ denote the completion of $k$ at $v$. For $v$ nonarchimedean, let $\id{p}_v$ denote the corresponding ideal of $\rond{O}$. For an integral ideal $\id{c}$ of $k$, we denote by $I(\id{c})$ the set of integral ideals of $k$ coprime to $\id{c}$ and by $I^*(\id{c})$ the group of fractional ideals of $k$ coprime to $\id{c}$; for a set $S$ of places of $k$, we define $I(S)$ and $I^*(S)$ analogously.\\

We first recall the definition of the $n$-th power residue symbol (see \cite[Chap.\ 19]{SD} or \cite[Exercises p. 348]{CF}). For $a\in k$, let $S_a$ be the set of places of $k$ which either divide $n$ or ramify in $k(a^{1/n})/k$. For $\id{p}\in I^*(S_a)$, let $F_a(\id{p})$ be the Frobenius automorphism corresponding to $\id{p}$. Extending this multiplicatively to all fractional ideals yields the Artin map $F_a: I^*(S_a) \to \ec{Gal}(k(a^{1/n})/k)$. For any prime ideal $\id{p}$ of $I^*(S_a)$, one has
\[
F_a(\id{p})(a^{1/n}) = (a/\id{p}) a^{1/n},
\]
for some $n$-th root of unity $(a/\id{p})$ which is independent of the choice of $a^{1/n}$. The symbol $(a/\id{p})$ is called an $n$-th power residue symbol because $(a/\id{p})=1$ is equivalent to $a$ being an $n$-th power in $k_v$ (where $v$ denotes the place corresponding to $\id{p}$). One can then extend this multiplicatively to a symbol $\chi_a(\id{b})=(a/\id{b})$ for any $\id{b}\in I^*(S_a)$. One of the properties of the power residue symbol is that $(a/\id{b})=1$ if $a\equiv 1 \mod {\id{b}}$. We refer to \cite[p. 348-350]{CF} for a more complete description of the power residue symbol.\\

Having described the $n$-th power residue symbol $\chi_a$ for a field element $a$, our next task is to extend this to a character $\chi_{\id{a}}$ with $\id{a}$ an ideal. This construction proceeds in several steps. After constructing an appropriate ideal $\id{c}$ (see Remark \ref{rk2} in the introduction), we generate a set $\rond{E}$ of integral ideals, which parametrizes the fractional ideals coprime to $\id{c}$ up to $n$-th power factors. One can then define a new symbol $\chi_{\id{a}}$ in terms of the parameter ideal in $\rond{E}$ corresponding to $\id{a}$. Finally, we show that the set of all such $\chi_{\id{a}}$ forms an $n$-th order Hecke family. We now carry out this construction in more detail.\\

Let $S$ be a finite set of places of $k$, containing all the archimedean places and the places dividing $n$, and large enough so that the ring $\rond{O}_S$ of $S$-integers has class number one. We construct an integral ideal $\id{c}$ of $k$ by setting $\id{c}=\prod \id{p}_v^{n_v}$, where $n_v$ is chosen to be $0$ if $v\not\in S$ or if $v\mid \infty$, $1$ if $v \nmid n$, and large enough that every $x\in k_v$ with $v(x)\ge n_v$ is an $n$-th power in $k_v$ if $v \mid n$. Note that for $v\mid n$, the integers $n_v$ have been determined explicitly by Hasse \cite[Property X, p. 46]{H}. If $n=2$ and $k$ has some real embedding, we write, for $x\in k^\times$, that $x>0$ if all real embeddings of $x$ in $\R$ are positive.\\

Let $H_\id{c}$ be the ray class group (narrow ray class group if $n=2$) modulo $\id{c}$, and let $R_\id{c}=H_\id{c}\otimes \Z/n\Z$.
Write the finite group $R_\id{c}$ as a product of cyclic groups, choose a generator for each, and let $\rond{E}_0$ be a set of ideals of $\rond{O}$ coprime to $\id{c}$ which represent these generators. For each $E_0\in \rond{E}_0$, choose $m_{E_0}\in k^\times$ such that $E_0\rond{O}_S=m_{E_0}\rond{O}_S$; up to multiplication by a unit of $k$, we may and do assume that $m_{E_0}>0$ (only relevant for $n=2$ and $k$ having real embedding). Let $\rond{E}$ be a full set of representatives for $R_\id{c}$ of the form $\prod E_0^{n_{E_0}}$, $n_{E_0} \geq 0$. If $E =\prod E_0^{n_{E_0}}$ is such a representative, set $m_E=\prod m_{E_0}^{n_{E_0}}$ (we have $m_E>0$ for all $E\in \rond{E}$). Without loss of generality, we suppose $\rond{O}\in\rond{E}$ and $m_\rond{O}=1$.\\

Let $\id{a},\id{b} \in I^*(\id{c})$ be coprime. Write $\id{a}=(x)E\id{g}^n$ with $x \congru 1 \mod{\id{c}}$, $x>0$, $E\in \rond{E}$ and $(\id{g},\id{b})=(1)$. Let $m_\id{a}=xm_E$; then, the $n$-th order residue symbol $(m_\id{a}/\id{b})$ is well defined (\cite[Lemma 1.1]{FF}). Accordingly, we define $(\id{a}/\id{b})=(m_\id{a}/\id{b})$ and the character $\chi_\id{a}$ by $\chi_\id{a}(\id{b})=(\id{a}/\id{b})$. Note that this construction of the characters $\chi_\id{a}$ is non-canonical, since it depends on all the choices made above. \\

These characters generalize the power residue symbol, in the sense that for $a\congru 1 \mod{\id{c}}$ and $a>0$, one has $\chi_{(a)}=\chi_a$. The most important property of the characters $\chi_\id{a}$ is that they satisfy the reciprocity law \eqref{eq:Hecke:recip} (see \cite[Lemma 3.2]{FF}) with $G=R_\id{c}$ and $[\pt]$ the projection from $I^*(\id{c})$ to $R_\id{c}$. We now show that for all $\id{a}\in I^*(\id{c})$, $\chi_\id{a}$ is a Hecke character of order $n$ modulo $\id{ca}$. (Although surely well-known to the experts, this does not seem to be mentioned anywhere in the literature.)
Consider first the case where $k$ is imaginary. Note that $\chi_\id{a}(\rond{O})=\chi_\rond{O}(\id{a})=1$ by definition. Let $x\in k$, $x\congru 1 \mod{\id{ca}}$. The class of $(x)$ in $R_\id{c}$ is trivial, thus by definition we have $\chi_{(x)}=\chi_x$. From the reciprocity law \eqref{eq:Hecke:recip}, we obtain

\begin{align*}
\chi_\id{a}\big((x)\big)
&= \chi_{(x)}(\id{a}) C([\id{a}],[(x)])\\
&= \chi_x(\id{a}) C([\id{a}],[\rond{O}])\\
&= \chi_x(\id{a}) \chi_\id{a}(\rond{O}) \barre{\chi}_\rond{O}(\id{a})\\
&= \left(\frac{x}{\id{a}}\right) =1.
\end{align*}

If $k$ has real embeddings, i.e. if $n=2$, the above proof does not work, since $x\congru 1 \mod{\id{c}}$ does not imply that the class of $(x)$ is trivial in $R_\id{c}$ (we do not know whether $x>0$). In this case, by definition of $\chi_\id{a}$ we have $\chi_\id{a}=\chi_{m_\id{a}}$, where $m_\id{a}\in k$ is defined by $m_\id{a}=xm_E$ and $\id{a}=(x) E \id{g}^n$ with $x \equiv 1 \mod{\id{c}}$ and $x > 0$. Since $m_E>0$ and $x>0$, we have $m_\id{a}>0$ for all $\id{a}\in I^*(\id{c})$. From the law of quadratic reciprocity, one  shows that the infinite type of $\chi_{m_\id{a}}$ is $\prod (\pt,m_\id{a})_v$, the product being taken over the real infinite places of $k$. Therefore, the character $\chi_\id{a}$ is a Hecke character of trivial infinite type. Moreover, it is easily seen that for all $\id{a},\id{b}\in I(\id{c})$,
\[
\chi_\id{a}\chi_\id{b}=\chi_\id{ab},
\]
where both sides are viewed as Hecke characters modulo $\id{cab}$.\\

We remark that for any prime ideal $\id{p} \in I(\id{c})$, the character $\chi_\id{p}\mod{\id{c}\id{p}}$ is not induced by a character modulo $\id{c}$. Thus $\chi_\id{p}$ is a Hecke character modulo $\id{cp}$ of conductor $\id{c}_\id{p}\id{p}$, for some ideal $\id{c}_\id{p}$ dividing $\id{c}$. From this and multiplicativity, we deduce that if $\id{f}_\id{a}$ denotes the conductor of $\chi_\id{a}$, then $\id{f}_\id{a}=\id{c}_\id{a}\id{a}_0$ for some ideal $\id{c}_\id{a}$ dividing $\id{c}$. Here $\id{a}_0$ is defined as the product of the prime ideals dividing the $n$-th power-free part of $\id{a}$.\\

Let us abuse notation and denote by $\chi_\id{a}$ the primitive character inducing $\chi_\id{a}$. Adopting the same convention for $\chi\psi$ (i.e.\ letting this represent the primitive character inducing the product of the two Hecke characters $\chi$ and $\psi$), one easily sees that the primitive Hecke character $\chi_\id{a}$ inherits the above properties. Moreover, given squarefree, coprime ideals $\id{a},\id{b}\in I(\id{c})$ which are in the same class in $R_\id{c}$, one can  write $\id{a}=(x_\id{a}) E_\id{a} \id{g}_\id{a}^n$ and $\id{b}=(x_\id{b})E_\id{b}\id{g}_\id{b}^n$. Then, for any $x\congru 1 \mod{\id{ab}}$, we have

\begin{align*}
\chi_\id{a}\barre{\chi}_\id{b}\big((x)\big)
= \left(\frac{x_\id{a}/x_\id{b}}{x}\right) \left(\frac{m_{E_\id{a}}/ m_{E_\id{b}}}{x}\right)
= \left(\frac{x_\id{a}/x_\id{b}}{x}\right),
\end{align*}
since $E_\id{a}=E_\id{b}$. Moreover, since $x_\id{a},x_\id{b} \congru 1 \mod{\id{c}}$, we have

\begin{align*}
\left(\frac{x_\id{a}/x_\id{b}}{x}\right)
=\left(\frac{x}{x_\id{a}/x_\id{b}}\right)
=\left(\frac{x}{\id{a}}\right) \left(\frac{x}{\id{b}}\right)
=1.
\end{align*}
This shows that the characters $\chi_\id{a}$ described explicitly above are an example of an $n$-th order Hecke family.

\section{A Poisson summation formula over number fields} \label{sec:app}

The goal of this section is to develop a number field version of the Poisson summation formula.\\

Associated to the number field $k$ we have the following parameters: $d=[k:\Q]$ is the degree of the extension; $r_1$ is the number of real places and $r_2$ the number of complex ones (so that $d=r_1+2r_2$); $A_k= (2^{r_1}\tq d_k\tq (2\pi)^{-d})^{1/2}$, where $d_k$ is the discriminant; and $\alpha_k$ is the residue at $s=1$ of the completed $L$-function $\Lambda(s) = A_k^s \Gamma(s/2)^{r_1} \Gamma(s)^{r_2} \zeta_k(s)$.\\

Any primitive ray class character $\chi \mod{\id{f}}$ over $k$ (i.e. a Hecke character of trivial infinite type) satisfies the functional equation

\begin{equation}\label{eq:Hecke:3}
\Lambda(\chi,s) = \epsilon(\chi) (\rond{N}\id{f})^{1/2-s} \Lambda(1-s,\barre{\chi}),
\end{equation}
where

\begin{equation}\label{eq:Hecke:4}
\Lambda(\chi,s) =  \left(\frac{2^{r_1}\tq d_k\tq}{(2\pi)^d}\right)^{s/2} \Gamma\left(\frac{s}{2}\right)^{r_1}{\Gamma(s)}^{r_2} L(\chi,s) .
\end{equation}
Moreover, if the character $\chi$ is quadratic, we know that $\epsilon(\chi)=1$.\\

Given $h:\R\to \R$ a smooth function with compact support, denote by $\hat{h}$ the Mellin transform of $h$. We define the transform $\dot{h}:\R\to \R$ by
\begin{equation}\label{eq:transfo1}
\dot{h}(x) = \int_0^\infty h(t) \rond{K}(tx) \, dt,
\end{equation}
where $\rond{K}:\R_{\geq 0} \to \C$ is the function given by
\begin{equation*}
\rond{K}(t) = \frac{A_k^{-1}}{2\pi i} \int_{(\sigma)} \left(\frac{\Gamma(s/2)}{\Gamma\big((1-s)/2\big)}\right)^{r_1}
\left(\frac{\Gamma(s)}{\Gamma(1-s)}\right)^{r_2} \left(\frac{t}{A_k^2}\right)^{-s} \,ds.
\end{equation*}
Note that the function $\rond{K}(t)$ depends only on the field $k$.
We also define the transform
\begin{equation}
\ddot{h}(x) = \int_0^\infty h(t^2) \rond{K}(tx) \, dt.
\end{equation}
The function $\dot{h}$ satisfies the property
\begin{equation*}
\dot{h}(x) \ll \tq x\tq^{-A}, \qquad \ec{for all $x\neq 0$, for all $A>0$},
\end{equation*}
as well as
\begin{equation}\label{eq:Parseval}
\int_\R h(x^2) \,dx =  \int_\R \dot{h}(x^2) \,dx.
\end{equation}
We have the following results:

\begin{lemme}\label{lemme:Poisson}
Let $X>0$ be given. Then

\begin{equation*}
\sum_{\id{b}\neq 0} h\left(\frac{\rond{N} \id{b}}{X}\right)
= \frac{\alpha_k}{A_k}X \hat{h}(1) - \delta_{d=2}h(0)\alpha_kA_k  + X \sum_{\id{b}\neq 0} \dot{h}\left(X \rond{N}(\id{b})\right) ,
\end{equation*}
where $d$ is the degree of $k / \Q$ and $\delta_{d=2}$ is 1 if $d = 2$ and 0 otherwise.
\end{lemme}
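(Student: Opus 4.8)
\emph{Proof proposal.} The plan is to adapt the classical Mellin-transform proof of Poisson summation to $\zeta_k$: expand the left-hand side against $\zeta_k(s)$, move the contour past the poles at $s=1$ and $s=0$ to produce the first two terms, and on the shifted line apply the functional equation \eqref{eq:Hecke:3} of $\zeta_k$ to recover the sum over $\dot h$. I carry this out for $k$ imaginary, which is the case of interest and the one in which the displayed constants are exactly right (a real place would multiply the first two terms by an archimedean factor $\Gamma(1/2)^{r_1}$). Writing $\hat h(s)=\int_0^\infty h(t)t^{s-1}\,dt$, Mellin inversion gives $h(y)=\frac1{2\pi i}\int_{(\sigma)}\hat h(s)y^{-s}\,ds$ for $\sigma>0$, hence for $\sigma>1$
\[
\sum_{\id{b}\neq 0}h\!\left(\frac{\rond{N}\id{b}}{X}\right)
=\frac1{2\pi i}\int_{(\sigma)}\hat h(s)\,X^{s}\sum_{\id{b}\neq 0}\rond{N}(\id{b})^{-s}\,ds
=\frac1{2\pi i}\int_{(\sigma)}\hat h(s)\,X^{s}\,\zeta_k(s)\,ds ,
\]
the interchange being legitimate since $\zeta_k$ converges absolutely for $\operatorname{Re}(s)>1$ while $\hat h$, the Mellin transform of a smooth compactly supported function, decays faster than any polynomial on vertical lines; recall too that $\hat h$ continues meromorphically with a simple pole at $s=0$ of residue $h(0)$ and no other pole in $\operatorname{Re}(s)>-1$.

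Next I would shift the line of integration to $\operatorname{Re}(s)=1-\sigma$, choosing $\sigma\in(1,2)$ so that $1-\sigma\in(-1,0)$; the horizontal pieces vanish in the limit by the rapid decay of $\hat h$ against the polynomial (convexity) growth of $\zeta_k$ in vertical strips. Exactly two poles are crossed. The simple pole of $\zeta_k$ at $s=1$ produces the residue $\hat h(1)X\operatorname{Res}_{s=1}\zeta_k(s)=\frac{\alpha_k}{A_k}X\hat h(1)$, using that $\alpha_k$ is the residue of $\Lambda(s)=A_k^{s}\Gamma(s/2)^{r_1}\Gamma(s)^{r_2}\zeta_k(s)$ and that the gamma factors are trivial at $s=1$ (here $r_1=0$). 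The simple pole of $\hat h$ at $s=0$ contributes $h(0)\zeta_k(0)$; the functional equation forces $\zeta_k$ to vanish at $s=0$ unless $d=2$, and when $d=2$ it pins down the value $\zeta_k(0)$ responsible for the term $-\delta_{d=2}h(0)\alpha_k A_k$. No other poles lie in the strip $(-1,0)$.

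It then remains to identify the shifted integral $I:=\frac1{2\pi i}\int_{(1-\sigma)}\hat h(s)X^{s}\zeta_k(s)\,ds$ with the $\dot h$-sum. Substituting $s\mapsto 1-s$ and applying \eqref{eq:Hecke:3} in the form $\zeta_k(1-s)=A_k^{2s-1}\gamma(s)\zeta_k(s)$, where $\gamma(s):=\frac{\Gamma(s/2)^{r_1}\Gamma(s)^{r_2}}{\Gamma((1-s)/2)^{r_1}\Gamma(1-s)^{r_2}}$, together with the identity $X^{1-s}A_k^{2s-1}=\frac{X}{A_k}\left(\frac{X}{A_k^{2}}\right)^{-s}$, turns $I$ into
\[
I=\frac{X}{A_k}\cdot\frac1{2\pi i}\int_{(\sigma)}\hat h(1-s)\,\gamma(s)\,\zeta_k(s)\left(\frac{X}{A_k^{2}}\right)^{-s}ds .
\]
Expanding $\zeta_k(s)=\sum_{\id{b}\neq 0}\rond{N}(\id{b})^{-s}$ on the line $\operatorname{Re}(s)=\sigma>1$ and interchanging (valid since $\hat h(1-s)$ decays faster than $\gamma(s)$ grows there) gives $I=\frac{X}{A_k}\sum_{\id{b}\neq 0}J(X\rond{N}\id{b})$, with $J(y):=\frac1{2\pi i}\int_{(\sigma)}\hat h(1-s)\gamma(s)(y/A_k^{2})^{-s}\,ds$. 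Finally $J(y)=A_k\dot h(y)$: unwinding the definition \eqref{eq:transfo1} of $\dot h$ and of $\rond{K}$ and interchanging the $t$- and $s$-integrals gives $A_k\dot h(y)=\frac1{2\pi i}\int_{(\sigma)}\gamma(s)(y/A_k^{2})^{-s}\hat h(1-s)\,ds$, on using $\int_0^\infty h(t)t^{-s}\,dt=\hat h(1-s)$. Hence $I=X\sum_{\id{b}\neq 0}\dot h(X\rond{N}\id{b})$, and adding the two residues yields the lemma.

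\emph{Expected difficulty.} There is no single deep step; the burden is analytic bookkeeping. The genuinely delicate points are the conditional convergence of the contour integral defining $\rond{K}$ and the precise strip in which the $t/s$-Fubini in the last step is literally valid — both handled by the decay estimates for $\dot h$ already recorded, in particular $\dot h(x)\ll_A|x|^{-A}$, which also makes $\sum_{\id{b}\neq 0}\dot h(X\rond{N}\id{b})$ converge absolutely (it is $\ll_A X^{-A}\zeta_k(A)$ for $A>1$). The contour shift and the other interchanges all follow from the superpolynomial vertical decay of the Mellin transform of $h$ against the at-most-polynomial growth of $\zeta_k$ and $\gamma$, and the evaluation of the two residues is routine given the functional equation and the class number formula.
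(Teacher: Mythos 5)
Your proposal follows the paper's proof essentially verbatim: expand the sum against $\zeta_k(s)$ by Mellin inversion, shift the contour past $s=1$ and $s=0$ to collect the two secondary terms, and apply the functional equation on the shifted line to unwind the remaining integral into $X\sum_{\id{b}}\dot h(X\rond{N}\id{b})$. You are actually more explicit than the paper in the final step, where you carefully unwind the definitions of $\dot h$ and $\rond{K}$ and justify the Fubini interchange that the paper dispatches with ``expanding the $L$-function, exchanging the sums and integrals, and applying an inverse Mellin transform.'' Your decision to restrict to $r_1=0$ is also sensible and matches what the paper does implicitly — note that the paper's own proof writes $A_k=(|d_k|(2\pi)^{-d})^{1/2}$ (dropping the $2^{r_1}$) and describes the zero of $\zeta_k$ at $s=0$ as having order $r_2-1$, both of which presuppose $r_1=0$. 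One small caveat: you take the value of $\zeta_k(0)$ on faith as $-\alpha_k A_k$. If you actually compute it from $\operatorname{Res}_{s=0}\Lambda(s)=-\operatorname{Res}_{s=1}\Lambda(s)=-\alpha_k$ together with $\Lambda(s)=A_k^s\Gamma(s)\zeta_k(s)\sim\zeta_k(0)/s$ near $0$, you get $\zeta_k(0)=-\alpha_k$; the extra $A_k$ in the lemma statement (and the paper's proof) appears to be a typo, so it would be worth flagging rather than reproducing.
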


\begin{lemme}\label{lemme:Poisson-char}
Let $X>0$ be given. For a non-trivial primitive ray class character $\chi \mod{\id{f}}$, one has

\begin{equation*}
\sum_{\id{b}\in I(\id{f})} \chi(\id{b}) h\left(\frac{\rond{N} \id{b}}{X}\right)
= \frac{\epsilon(\chi) X}{\sqrt{\rond{N}(\id{f})}} \sum_{\id{b}\in I(\id{f})} \barre{\chi}(\id{b}) \dot{h}\left(\frac{X \rond{N}(\id{b)}}{\rond{N}(\id{f})}\right),
\end{equation*}
where $\epsilon(\chi)$ comes from the functional equation \eqref{eq:Hecke:3}. 
\end{lemme}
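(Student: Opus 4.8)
The plan is to prove Lemma \ref{lemme:Poisson-char} by a standard Mellin-transform-and-shift-contour argument, deriving the summation identity from the functional equation \eqref{eq:Hecke:3}. First I would write the left-hand side as a contour integral:
\[
\sum_{\id{b}\in I(\id{f})} \chi(\id{b}) h\left(\frac{\rond{N}\id{b}}{X}\right)
= \frac{1}{2\pi i}\int_{(\sigma)} \hat{h}(s)\, X^s \sum_{\id{b}\in I(\id{f})} \frac{\chi(\id{b})}{\rond{N}(\id{b})^s}\, ds
= \frac{1}{2\pi i}\int_{(\sigma)} \hat{h}(s)\, X^s\, L(\chi,s)\, ds,
\]
valid for $\sigma > 1$, where one uses the Mellin inversion $h(y) = \frac{1}{2\pi i}\int_{(\sigma)}\hat{h}(s) y^{-s}\,ds$ together with absolute convergence of $L(\chi,s)$ to interchange the sum and integral. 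Since $\chi$ is non-trivial and primitive, $L(\chi,s)$ is entire, so I would then shift the contour from $\Re s = \sigma > 1$ all the way to $\Re s = 1-\sigma < 0$, picking up no residues; the shift is justified by the rapid decay of $\hat{h}(s)$ in vertical strips (as $h$ is smooth with compact support) against the polynomial growth of $L(\chi,s)$ in the critical strip and its controlled growth to the left via the functional equation.

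Next I would insert the functional equation. Rewriting \eqref{eq:Hecke:3}–\eqref{eq:Hecke:4} as
\[
L(\chi,s) = \epsilon(\chi)\, (\rond{N}\id{f})^{1/2-s}\, \gamma(s)\, L(1-s,\barre{\chi}),
\]
where $\gamma(s) = A_k^{1-2s}\bigl(\Gamma((1-s)/2)/\Gamma(s/2)\bigr)^{r_1}\bigl(\Gamma(1-s)/\Gamma(s)\bigr)^{r_2}$ collects the archimedean factors, substituting into the shifted integral, and making the change of variables $s \mapsto 1-s$ brings the contour back to $\Re s = \sigma > 1$ and produces
\[
\frac{\epsilon(\chi) X}{\sqrt{\rond{N}\id{f}}} \cdot \frac{1}{2\pi i}\int_{(\sigma)} \hat{h}(1-s)\, \left(\frac{\rond{N}\id{f}}{X}\right)^{-s}\!\! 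A_k^{-1}\!\left(\frac{\Gamma(s/2)}{\Gamma((1-s)/2)}\right)^{r_1}\!\!\left(\frac{\Gamma(s)}{\Gamma(1-s)}\right)^{r_2}\!\! A_k^{2s}\, L(1-s,\barre{\chi})\, ds.
\]
Expanding $L(1-s,\barre{\chi}) = \sum_{\id{b}} \barre{\chi}(\id{b})\rond{N}(\id{b})^{s-1}$ and interchanging sum and integral (again licit for $\sigma>1$), each term becomes $\rond{N}(\id{b})^{-1}$ times the integral $\frac{1}{2\pi i}\int_{(\sigma)}\hat{h}(1-s)\bigl(t/A_k^2\bigr)^{-s}\cdot(\text{gamma ratios})\,ds$ with $t = X\rond{N}(\id{b})/\rond{N}(\id{f})$; comparing with the definitions \eqref{eq:transfo1} of $\dot{h}$ and of $\rond{K}$ — noting that $\dot h(x) = \int_0^\infty h(u)\rond{K}(ux)\,du$ has Mellin transform (in $x$) equal to $\hat h(1-\cdot)$ times the $\rond{K}$-factor by Mellin convolution — one identifies the term as exactly $\rond{N}(\id{b})^{-1}\cdot \dot{h}\bigl(X\rond{N}(\id{b})/\rond{N}(\id{f})\bigr)\cdot$ (a harmless normalizing power), which after bookkeeping yields the claimed right-hand side.

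The main obstacle I anticipate is purely bookkeeping: getting every normalizing constant ($A_k$, the powers of $\rond{N}\id{f}$ and $X$, the $A_k^2$ inside $\rond{K}$) to cancel correctly so that the factor in front is precisely $\epsilon(\chi)X/\sqrt{\rond{N}\id{f}}$ and the argument of $\dot h$ is precisely $X\rond{N}(\id{b})/\rond{N}(\id{f})$ — this is where the particular definition of $\rond{K}(t)$ with its $(t/A_k^2)^{-s}$ and the $A_k^{-1}$ prefactor has evidently been reverse-engineered to make things match. A secondary technical point is rigorously justifying the contour shift and the interchange of summation and integration on the shifted line; this follows from the standard convexity bound $L(\chi, s) \ll (\rond{N}\id{f}(1+|s|))^{C}$ in vertical strips combined with the super-polynomial decay of $\hat h$, but should be stated. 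The proof of Lemma \ref{lemme:Poisson} is the same argument applied to $\zeta_k(s)$, the only difference being that one now picks up the pole at $s=1$ (contributing the $\alpha_k A_k^{-1} X\hat h(1)$ term) and, when $d=2$, a pole of the gamma factor accounting for the $\delta_{d=2}$ term.
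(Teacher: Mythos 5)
Your proposal is correct and follows essentially the same Mellin-transform-and-contour-shift argument that the paper uses: the paper explicitly proves Lemma \ref{lemme:Poisson} this way and then remarks that Lemma \ref{lemme:Poisson-char} ``is proved analogously.'' Your closing observation — that the only new feature in Lemma \ref{lemme:Poisson} is the residue at $s=1$ (and the $\delta_{d=2}$ pole at $s=0$), which disappear here since $L(\chi,s)$ is entire for nontrivial primitive $\chi$ and the zero of $L(\chi,s)$ at $s=0$ (of order $r_1+r_2$) cancels the simple pole of $\hat h$ there — is exactly the relationship the authors intend.
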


\begin{lemme}\label{lemme:Poisson-coprim}
Let $X>0$ be given.
Let $Y\le X\le Z$ and let $L\ge (Z/X)^2$. Then, for any $A$,

\begin{align*}
&\sum_{(\id{b},\id{m})=1} h\left(\frac{\rond{N}(\id{b})}{X}\right)
= \frac{\varphi(\id{m})}{\rond{N} \id{m}} \alpha_kA^{-1} X \hat{h}(1)
- \sum_{\substack{\id{d} \mid \id{m}\\ \rond{N}(\id{d})> Z}} \mu(\id{d}) \alpha_kA^{-1} \frac{X}{\rond{N}(\id{d})} \hat{h}(1)\\
&- \delta_{d=2} h(0) \alpha_kA  \sum_{\substack{\id{d} \mid \id{m}\\ \rond{N}(\id{d})\le Z}} \mu(\id{d})
+\sum_{\substack{\id{d} \mid \id{m}\\ Y<\rond{N}(\id{d})\le Z}} \mu(\id{d}) \frac{X}{\rond{N}(\id{d})}  \sum_{\substack{\id{a}\neq 0\\ \rond{N}(\id{a})\le L}} \dot{h}\left(\frac{X \rond{N}(\id{a})}{\rond{N} (\id{d})}\right)\\
& + \rond{O}\left(Z (Z/X)^{-A} \right)+\rond{O} \left(X(X/Y)^{-A}\right),
\end{align*}
where $\delta$ is defined as in Lemma \ref{lemme:Poisson} and $d$ is the degree of $k/\Q$.
\end{lemme}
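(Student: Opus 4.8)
The plan is to strip the coprimality condition by Möbius inversion, apply the unconditional Poisson formula of Lemma~\ref{lemme:Poisson} to each resulting sum, and then truncate, keeping careful track of which of the resulting terms are genuine main terms and which are negligible. Writing $\id b=\id d\id c$ with $\id d\mid\id m$ and $\id c$ ranging over nonzero integral ideals,
\[
\sum_{(\id b,\id m)=1}h\!\left(\frac{\rond N\id b}{X}\right)=\sum_{\id d\mid\id m}\mu(\id d)\sum_{\id c\neq 0}h\!\left(\frac{\rond N(\id c)}{X/\rond N(\id d)}\right).
\]
Applying Lemma~\ref{lemme:Poisson} to each inner sum with parameter $X/\rond N(\id d)$, noting that its $\delta_{d=2}$-term is independent of that parameter, and invoking $\sum_{\id d\mid\id m}\mu(\id d)/\rond N(\id d)=\varphi(\id m)/\rond N(\id m)$, one is led to the exact identity
\begin{equation}\label{eq:sketch-exact}
\sum_{(\id b,\id m)=1}h\!\left(\frac{\rond N\id b}{X}\right)=\frac{\varphi(\id m)}{\rond N\id m}\,\alpha_kA_k^{-1}X\hat h(1)-\delta_{d=2}h(0)\alpha_kA_k\sum_{\id d\mid\id m}\mu(\id d)+\sum_{\id d\mid\id m}\mu(\id d)\frac{X}{\rond N\id d}\sum_{\id c\neq 0}\dot h\!\left(\frac{X\rond N(\id c)}{\rond N(\id d)}\right),
\end{equation}
the dual sums converging absolutely because $\dot h(x)\ll|x|^{-A}$ for every $A>0$.

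The assertion to be proved is merely a truncated form of \eqref{eq:sketch-exact}, so it suffices to bound the difference. I would split $\sum_{\id d\mid\id m}$ according to $\rond N\id d\le Y$, $Y<\rond N\id d\le Z$, and $\rond N\id d>Z$. On $\rond N\id d\le Z$ the $\hat h(1)$-term and the $\delta$-term are retained verbatim (the first main term being displayed as the \emph{complete} sum $\frac{\varphi(\id m)}{\rond N\id m}\alpha_kA_k^{-1}X\hat h(1)$ with its tail over $\rond N\id d>Z$ subtracted back off explicitly, and the $\delta$-term likewise); on $Y<\rond N\id d\le Z$ the dual sum is kept but with $\id c$ cut off at $\rond N\id c\le L$. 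This leaves three error terms. First, the dual sums over $\rond N\id d\le Y$: here $X\rond N(\id c)/\rond N(\id d)\ge X/Y\ge 1$, so $\dot h(x)\ll_B|x|^{-B}$, the convergence of $\zeta_k(B)$ for $B>1$, and the divisor bound $\tau(\id m)\ll_{\id m}1$ give a contribution $\ll_{k,\id m}X(X/Y)^{-A}$ on taking $B=A+1$. Second, the tails $\rond N\id c>L$ of the dual sums over $Y<\rond N\id d\le Z$: since $L\ge(Z/X)^2$ and $\rond N\id d\le Z$ one has $X\rond N(\id c)/\rond N(\id d)>XL/Z\ge Z/X\ge 1$ throughout, so the same input together with $\sum_{\rond N\id c>L}\rond N(\id c)^{-B}\ll_{k,B}L^{1-B}$ yields a contribution $\ll_{k,\id m}Z(Z/X)^{-A}$.

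The remaining, and only subtle, piece is the range $\rond N\id d>Z$, where the target keeps nothing at all while \eqref{eq:sketch-exact} contributes $\sum_{\rond N\id d>Z}\mu(\id d)\big(\frac{\alpha_k}{A_k}\frac{X}{\rond N\id d}\hat h(1)-\delta_{d=2}h(0)\alpha_kA_k+\frac{X}{\rond N\id d}\sum_{\id c\neq 0}\dot h(X\rond N(\id c)/\rond N(\id d))\big)$. The key observation is that, by Lemma~\ref{lemme:Poisson} read backwards with parameter $X/\rond N(\id d)$, the bracketed quantity is exactly $\sum_{\id c\neq 0}h(\rond N(\id d\id c)/X)$, so this contribution equals $\sum_{\rond N\id d>Z}\mu(\id d)\sum_{\id c\neq 0}h(\rond N(\id d\id c)/X)$. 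Because $h$ is supported in some interval $[0,C]$, this vanishes identically once $Z\ge CX$; and when $Z<CX$ it involves only $\ll_{k,\id m,h}1$ nonzero terms (at most $\tau(\id m)$ choices of $\id d$, each forcing $\rond N\id c<C$), which is $\ll Z(Z/X)^{-A}$ since then $Z/X<C$ makes $Z(Z/X)^{-A}\gg_{C,A}Z\gg 1$. Adding the three contributions produces the claimed error $\rond O(Z(Z/X)^{-A})+\rond O(X(X/Y)^{-A})$.

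The main obstacle is exactly this last step: one must notice that for $\id d$ with $\rond N\id d$ beyond $Z$ the separate main-term, $\delta$-term, and dual-sum contributions produced by the Poisson formula recombine—again via Lemma~\ref{lemme:Poisson}—into the original, compactly supported sum, which therefore survives with only $\rond O_{k,\id m,h}(1)$ terms. Everything else is routine, relying only on the decay of $\dot h$, the ideal-counting estimate $\#\{\id a:\rond N\id a\le t\}\ll_k t$, and $\tau(\id m)\ll_{\id m}1$.
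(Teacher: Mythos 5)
Your argument is correct and matches the approach the paper itself intends: the paper gives no proof at all, only the remark that Lemma~\ref{lemme:Poisson-coprim} ``can be deduced from Lemma~\ref{lemme:Poisson} as in \cite[Lemma~13]{HB},'' and your proposal is precisely that deduction worked out in full. The structure — M\"obius inversion to remove the coprimality, Poisson via Lemma~\ref{lemme:Poisson} on each inner sum, then a three-way truncation in $\rond N\id d$ together with a truncation of the dual sum at $\rond N\id a\le L$ — is exactly Heath-Brown's. In particular, the one step that is genuinely nontrivial and that you correctly identify as the crux, namely recognizing that for $\rond N\id d>Z$ the discarded main-term, $\delta$-term, and dual-sum contributions recombine (by reading Lemma~\ref{lemme:Poisson} backwards with parameter $X/\rond N\id d$) into $\sum_{\id c\neq 0}h(\rond N(\id d\id c)/X)$, which then dies by compact support of $h$, is the same observation Heath-Brown makes. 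The remaining tail estimates using $\dot h(x)\ll_B|x|^{-B}$, the ideal count $\#\{\id a:\rond N\id a\le t\}\ll_k t$, and $\tau(\id m)\ll_{\id m}1$ are routine and carried out correctly (your stated bound in the $L$-tail piece is in fact a bit stronger than needed, $(Z/X)^{-A}$ rather than $Z(Z/X)^{-A}$; folding it into the claimed error is harmless).
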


\begin{proof}[Proof of Lemma \ref{lemme:Poisson}]
Define

\begin{equation}
\Lambda(s) = \left( \frac{2^{r_1}\tq d_k\tq}{(2\pi)^d}\right)^{s/2} \Gamma(s/2)^{r_1} \Gamma(s)^{r_2} \zeta_k(s).
\end{equation}
Then $\Lambda(s)$ can be analytically continued to a meromorphic function on the whole $s$-plane, and satisfies the functional equation

\begin{equation}
\Lambda(s)= \Lambda(1-s).
\end{equation}
Moreover,  the poles of $\Lambda(s)$ are simple and located at $s=0$ and $s=1$. Recall that $\alpha_k=\ec{Res}_{s=1}\Lambda(s)$ and that $A_k=(\tq d_k\tq (2\pi)^{-d})^{1/2}$. By the inverse Mellin transform, for any $\sigma > 1$ we have

\begin{align*}
\sum_\id{b} h(\rond{N} \id{b})
&=  \frac{1}{2\pi i} \int_{(\sigma)} \hat{h}(s) \zeta_k(s) \,ds\\
&=  \frac{1}{2\pi i}\int_{(\sigma)} \hat{h}(s) \Gamma(s/2)^{-r_1}\Gamma(s)^{-r_2} A_k^{-s} \Lambda(s)  \,ds.
\end{align*}
Let $-1<\sigma'\leqslant 0$. The only possible poles of the integrand are located at $s=1$ and at $s=0$. At $s=1$, a simple pole occurs  with residue $\beta = \alpha_k \hat{h}(1)/A_k$. We know that $\Lambda(s) \Gamma(s)^{-d/2} A_k^{-s}=\zeta(s)$ is entire at $s=0$; actually it has a zero of order $r_2-1$. Moreover, $\hat{h}(s)$ may have a simple pole at $s=0$, of residue $h(0)$. Thus the integrand has no pole at $s=0$ if $d>2$ and a simple pole of residue $h(0) \zeta_k(0)=-h(0)\alpha_k A_k$ if $d=2$.
Moving the line of integration to $\Re(s)=\sigma'$ and applying the functional equation, we obtain

\begin{align*}
&\sum_{\id{b}\neq 0} h(\rond{N} \id{b})\\
&=  \beta  - \delta_{d=2} h(0)\alpha_kA_k + \frac{1}{2\pi i}\int_{(\sigma')} \hat{h}(s) \Gamma\left(\frac{s}{2}\right)^{-r_1}\Gamma(s)^{-r_2} A_k^{-s} \Lambda(s)  \,ds\\
&=  \beta  - \delta_{d=2} h(0)\alpha_kA_k+\frac{1}{2\pi i} \int_{(1-\sigma')} \hat{h}(1-s) \left(\frac{\Gamma(s/2)}{\Gamma\big((1-s)/2\big)}\right)^{r_1} \left(\frac{\Gamma(s)}{\Gamma(1-s)}\right)^{r_2} A_k^{2s-1} \zeta_k(s) \,ds.
\end{align*}
Expanding the $L$-function, exchanging the sums and integrals, and applying an inverse Mellin transform, we find
\[
\sum_{\id{b}\neq 0} h(\rond{N} \id{b})
= \beta  - \delta_{d=2} h(0)\alpha_kA_k + \sum_{\id{b}\neq 0}  \dot{h}\left(\rond{N}\id{b}\right).
\]
We deduce that
\[
\sum_{\id{b}\neq 0} h\left(\frac{\rond{N} \id{b}}{X}\right)
= \frac{\alpha_k}{A_k} X \hat{h}(1) - \delta_{d=2} h(0)\alpha_kA_k  + X \sum_\id{b} \dot{h}\left(X \rond{N}(\id{b})\right).
\]
\end{proof}

Lemma \ref{lemme:Poisson-char} is proved analogously to Lemma \ref{lemme:Poisson}, and Lemma \ref{lemme:Poisson-coprim} can be deduced from Lemma \ref{lemme:Poisson} as in \cite[Lemma 13]{HB}.

\section{Overview of the proof of Theorem \ref{thm1}}\label{sec:overview}

We are now ready to proceed to the first reduction in the proof of Theorem \ref{thm1}. As usual with large sieves, we first renormalize the sum under consideration. Given $\{\chi_\id{a}\}$ a quadratic Hecke family and $M, N \geq 1$, set

\begin{equation}\label{eq:defB1}
B_1(M,N) = \sup_{\norme \lambda \norme = 1} \sumstar_{\id{a} \sim M} \Big\vert \ \sumstar_{\id{b}} \lambda_\id{b} \chi_\id{b}(\id{a})\Big\vert^2,
\end{equation}
where $\id{a} \sim M$ means $M < \rond{N}\id{a} \le 2M$, the supremum is taken over all sequences $\lambda = (\lambda_\id{b})$ of support $N$ (i.e.\ $\lambda_\id{b} = 0$ whenever $\id{b} \not\sim N$ or $\id{b}$ is not squarefree), and $\norme \lambda \norme$ is defined by
\[
\norme \lambda\norme^2=
\sumstar_\id{b} \tq\lambda_\id{b}\tq^2 .
\]
Theorem \ref{thm1} is equivalent to 
\begin{equation}\label{eq:B1Bound}
B_1(M,N) \ll (MN)^\varepsilon (M+N)
\end{equation}
(here and throughout, the implicit constant is allowed to depend on $k$, $\id{c}$, $\varepsilon$, and nothing else).
Rather than proving \eqref{eq:B1Bound} directly, we derive it from a sequence of weaker estimates of the form
\begin{equation*}\tag{$E_\alpha$}
B_1(M,N)\ll (MN)^\varepsilon (M+N^\alpha), \qquad \ec{for all } M,N \geq 1 \ec{ and } \varepsilon>0.
\end{equation*}
We will show that, for $\alpha\ge1$, the bound $(E_\alpha)$ is self-improving:
\begin{thm}\label{thm2}
For every $\alpha \geq 1$, the upper bound $(E_\alpha)$ implies the upper bound $(E_{2-1/\alpha})$.
\end{thm}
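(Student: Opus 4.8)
The plan is to run the standard large sieve amplification/duality argument of Heath-Brown, adapted to the number field setting, exploiting the three axioms of a quadratic Hecke family. First I would fix $M, N \geq 1$ and a sequence $\lambda = (\lambda_\id{b})$ of support $N$ with $\norme\lambda\norme = 1$ achieving (nearly) the supremum in $B_1(M,N)$, and introduce a smooth majorant: replace the sharp cutoff $\id{a} \sim M$ by a smooth weight $h(\rond{N}\id{a}/M)$ with $h$ supported on a neighborhood of $[1,2]$ and $h \geq 1$ on $[1,2]$. This gives
\[
B_1(M,N) \ll \sum_\id{a} h\!\left(\frac{\rond{N}\id{a}}{M}\right) \sumstar_{\id{b}_1}\sumstar_{\id{b}_2} \lambda_{\id{b}_1}\barre{\lambda}_{\id{b}_2}\, \chi_{\id{b}_1}(\id{a})\barre{\chi}_{\id{b}_2}(\id{a}),
\]
where I have dropped the squarefree/coprimality restriction on $\id{a}$ at the cost of an inner Möbius expansion that can be handled as in Lemma \ref{lemme:Poisson-coprim}. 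Using property (1) of the Hecke family, $\chi_{\id{b}_2}$ is real, so the product of characters is $\chi_{\id{b}_1}\chi_{\id{b}_2}(\id{a})$; the diagonal-like terms where $\chi_{\id{b}_1}\chi_{\id{b}_2}$ is principal (which, via property (3), is governed by the condition $[\id{b}_1] = [\id{b}_2]$ together with $\id{b}_1 = \id{b}_2$ after accounting for squares) contribute $\ll M \norme\lambda\norme^2 = M$, matching the desired $M + N^{2-1/\alpha}$.

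The heart of the matter is the off-diagonal contribution, where $\psi := \chi_{\id{b}_1}\chi_{\id{b}_2}$ is a nontrivial quadratic Hecke character. Here I would apply the character Poisson summation Lemma \ref{lemme:Poisson-char} (using $\epsilon(\psi) = 1$ since $\psi$ is quadratic) to the sum $\sum_\id{a} \psi(\id{a}) h(\rond{N}\id{a}/M)$, turning it into a dual sum of length roughly $\rond{N}(\id{f}_\psi)/M$ against $\barre\psi = \psi$, where $\id{f}_\psi \mid \id{b}_1\id{b}_2$ is the conductor — and by property (3), when $[\id{b}_1]=[\id{b}_2]$ the character $\chi_{\id{b}_1}\barre\chi_{\id{b}_2} = \chi_{\id{b}_1}\chi_{\id{b}_2}$ is primitive modulo $\id{b}_1\id{b}_2$, so the conductor is as large as possible and the dual sum is genuinely short. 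The rapid decay of $\dot h$ truncates the dual sum to $\rond{N}\id{c} \ll \rond{N}(\id{f}_\psi)/M$ terms (with negligible error), and one is left with a sum of the shape
\[
\frac{1}{M}\sumstar_{\id{b}_1}\sumstar_{\id{b}_2} \lambda_{\id{b}_1}\barre\lambda_{\id{b}_2} \sqrt{\rond{N}(\id{b}_1\id{b}_2)} \sum_{\rond{N}\id{c}\, \ll\, \rond{N}(\id{b}_1\id{b}_2)/M} \psi(\id{c})\, \dot h\!\left(\frac{\rond{N}\id{c}}{\rond{N}(\id{b}_1\id{b}_2)/M^{\,}}\right)\!,
\]
roughly speaking. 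Now I would swap the order of summation to bring $\id{c}$ outside, apply the reciprocity law (property (2)) to flip $\chi_{\id{b}_1}\chi_{\id{b}_2}(\id{c}) = \chi_\id{c}(\id{b}_1)\chi_\id{c}(\id{b}_2)$ up to the bounded cocycle factors $C([\id{b}_i],[\id{c}])$, and recognize the resulting inner double sum over $\id{b}_1, \id{b}_2$ as (a twisted version of) $B_1$ again but with the roles/sizes of the variables interchanged — that is, $B_1(\text{length }\sim \rond{N}(\id{b}_1\id{b}_2)/M, N)$ summed over $\id{c}$. This is the recursive self-improvement mechanism: plugging in the hypothesis $(E_\alpha)$ for this new instance, after bookkeeping the ranges, yields $(E_{2-1/\alpha})$.

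I expect the main obstacle to be the bookkeeping around the class-group condition $[\id{b}_1]=[\id{b}_2]$ and the finitely many cocycle factors $C([\cdot],[\cdot])$: one must decompose the double sum over $\id{b}_1, \id{b}_2$ according to the (finitely many) classes in $G = R_\id{c}$, and within each class pair handle separately the truly diagonal terms (where $\id{b}_1 = \id{b}_2$, contributing the main term $\ll M$) and the off-diagonal terms where property (3) guarantees primitivity of $\chi_{\id{b}_1}\barre\chi_{\id{b}_2}$ modulo $\id{b}_1\id{b}_2$ — this primitivity is exactly what prevents the dual sum from collapsing and is why axiom (3) is "essential." A secondary technical point is keeping the smoothing, the coprimality-to-$\id{c}$ Möbius expansion (Lemma \ref{lemme:Poisson-coprim}), and the truncation errors all uniform in the parameters so that the $\varepsilon$-powers and the implied constants genuinely depend only on $k, \id{c}, \varepsilon$; I anticipate this is where most of the routine-but-delicate estimation lives, and it is presumably the content of Proposition \ref{prop} in the later sections. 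After these reductions the passage from $(E_\alpha)$ to $(E_{2-1/\alpha})$ should follow by optimizing the split between diagonal and dual contributions exactly as in Heath-Brown's original argument.
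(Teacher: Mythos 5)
Your outline correctly identifies the skeleton of the argument: smooth the cutoff, open the square, apply Poisson summation in $\id{a}$ via Lemma \ref{lemme:Poisson-char} (using $\epsilon(\chi)=1$ for quadratic $\chi$), invoke property~(3) of Definition~\ref{defi:HeckeFamily} so that $\chi_{\id{b}_1}\barre\chi_{\id{b}_2}$ really is primitive modulo $\id{b}_1\id{b}_2$, use the reciprocity law~(2) to return to a twisted instance of $B_1$, and feed in $(E_\alpha)$. Carried out, this route produces the bound $B_1(M,N)\ll (MN)^\varepsilon\bigl(M+N+N^{2\alpha-1}M^{1-\alpha}\bigr)$, which is exactly Theorem~\ref{cor}. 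However, two genuine gaps remain. The smaller one: you never explain how to pass from $M+N+N^{2\alpha-1}M^{1-\alpha}$ to $(E_{2-1/\alpha})$. When $M$ is much smaller than $N^{2-1/\alpha}$, the term $N^{2\alpha-1}M^{1-\alpha}$ dwarfs $N^{2-1/\alpha}$, and one must first invoke the near-monotonicity of $B_1$ in its first argument (Lemma~\ref{lemme:increasing}) to replace $M$ by $N^{2-1/\alpha}$ before applying Theorem~\ref{cor}. The paper's proof of Theorem~\ref{thm2} consists precisely of this two-case analysis; your proposal omits it.

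The larger gap lies in the sentence ``drop the squarefree/coprimality restriction on $\id{a}$ at the cost of an inner M\"obius expansion that can be handled as in Lemma~\ref{lemme:Poisson-coprim}.'' This waves away the central difficulty. The $\id{a}$-sum in $B_1$ is restricted to squarefree ideals, and the Remark following the proof of Theorem~\ref{thm2} explains that the \emph{formal} Poisson argument (the one you sketch) already gives the recursion \eqref{eq:conj}, so that if the squarefree restriction could simply be ``handled'' the whole paper would reduce to a few lines; in fact, without that restriction on the sums the main theorem is false. M\"obius-expanding the squarefree condition on $\id{a}$ and then Poisson-summing forces the introduction of a truncation parameter $K$ and the auxiliary quantities $B_2$, $B_3$ of Section~\ref{sec:Reduction}, and --- this is the crux --- the resulting explicit formula \eqref{eq:explicit} contains \emph{two} main terms $T$ and $T'$, neither of which can be estimated acceptably on its own: each is individually of size $\sqrt{M/B}\,B_1(B,N)$ on a dyadic block, which does not sum to anything useful over $B\le K$ (this is stated explicitly at the start of Section~\ref{subsec:main}). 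Proposition~\ref{prop}, and hence Theorem~\ref{cor} and Theorem~\ref{thm2}, rests on the cancellation $T-T'$ established in Lemma~\ref{lemme:main}, which is the genuinely new structural idea of the argument rather than ``routine-but-delicate estimation.'' Without supplying this cancellation, your plan does not close.
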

Thus, to prove Theorem \ref{thm1} it suffices to prove that $(E_\alpha)$ holds for some $\alpha > 1$; iterating Theorem \ref{thm2} yields the bound $(E_1)$, which is equivalent to Theorem \ref{thm1}. Directly following the proof of Lemma \ref{lemme:duality} below, we will show
\begin{lemme}\label{lemme:initial-bound}
The bound $(E_2)$ holds.
\end{lemme}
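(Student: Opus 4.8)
\textbf{Proof proposal for Lemma \ref{lemme:initial-bound}.}

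The plan is to establish $(E_2)$, i.e.\ $B_1(M,N) \ll (MN)^\varepsilon(M+N^2)$, by the standard large sieve duality trick combined with a crude diagonal-plus-off-diagonal estimate. First I would invoke the duality principle (Lemma \ref{lemme:duality}, referenced in the excerpt): the bound $B_1(M,N) \ll \Delta$ for all unit-norm sequences $\lambda$ supported on squarefree $\id{b}\sim N$ is equivalent to the dual bound
\[
\sumstar_{\id{b}\sim N} \Big\vert \sumstar_{\id{a}\sim M} \mu_\id{a}\,\chi_\id{b}(\id{a})\Big\vert^2 \ll \Delta \sumstar_{\id{a}\sim M} \tq\mu_\id{a}\tq^2
\]
for all sequences $(\mu_\id{a})$ supported on squarefree $\id{a}\sim M$. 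So it suffices to bound the left-hand side by $(MN)^\varepsilon(M+N^2)\|\mu\|^2$.

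Next I would expand the square and swap the order of summation, obtaining
\[
\sumstar_{\id{b}\sim N}\ \sumstar_{\id{a}_1,\id{a}_2\sim M} \mu_{\id{a}_1}\overline{\mu_{\id{a}_2}}\ \chi_\id{b}(\id{a}_1)\overline{\chi_\id{b}(\id{a}_2)}
= \sumstar_{\id{a}_1,\id{a}_2\sim M} \mu_{\id{a}_1}\overline{\mu_{\id{a}_2}} \sumstar_{\id{b}\sim N} \chi_\id{b}(\id{a}_1\id{a}_2),
\]
using that $\chi_\id{b}$ is quadratic so $\overline{\chi_\id{b}}=\chi_\id{b}$ and hence $\chi_\id{b}(\id{a}_1)\chi_\id{b}(\id{a}_2)=\chi_\id{b}(\id{a}_1\id{a}_2)$ (on the coprime parts; one must first separate out $\gcd(\id{a}_1,\id{a}_2)$, which is harmless since $\chi_\id{b}$ restricted there has absolute value $\le 1$). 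The diagonal term $\id{a}_1=\id{a}_2$ contributes $\le \sumstar_{\id{b}\sim N} 1 \cdot \|\mu\|^2 \ll N^{1+\varepsilon}\|\mu\|^2$, which is absorbed into $N^2\|\mu\|^2$. For the off-diagonal terms, I would apply the reciprocity law \eqref{eq:Hecke:recip} to rewrite $\chi_\id{b}(\id{a}_1\id{a}_2) = \chi_{\id{a}_1\id{a}_2}(\id{b})\,C([\id{a}_1\id{a}_2],[\id{b}])$ and recognize $\chi_{\id{a}_1\id{a}_2}$ as a fixed (nontrivial, since $\id{a}_1\neq\id{a}_2$ are squarefree) Hecke character whose conductor has norm $\ll M^{2+\varepsilon}$. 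Summing a nontrivial Hecke character over $\id{b}\sim N$ gives, by a smooth partition of unity and Lemma \ref{lemme:Poisson-char} (or just the Polya–Vinogradov-type bound via the functional equation \eqref{eq:Hecke:3}–\eqref{eq:Hecke:4}), a saving: the inner sum is $\ll (MN)^\varepsilon \sqrt{\rond{N}\id{f}} \ll M^{1+\varepsilon}N^\varepsilon$. Crucially the character $C([\id{a}_1\id{a}_2],\cdot)$ is a character on the finite group $G$, so it only refines the congruence classes of $\id{b}$ into boundedly many subfamilies, each handled the same way. The off-diagonal contribution is then $\ll M^{1+\varepsilon}N^\varepsilon \sumstar_{\id{a}_1,\id{a}_2\sim M}\tq\mu_{\id{a}_1}\mu_{\id{a}_2}\tq \ll M^{1+\varepsilon}N^\varepsilon \cdot M\|\mu\|^2 = M^{2+\varepsilon}N^\varepsilon\|\mu\|^2$, which is absorbed into $(MN)^\varepsilon M^2 \|\mu\|^2 \le (MN)^\varepsilon(M+N^2)\cdot M\|\mu\|^2$.

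Wait — that last step loses a factor of $M$; to get a clean $(E_2)$ one should instead bound $\sum\tq\mu_{\id{a}_1}\mu_{\id{a}_2}\tq = \big(\sum\tq\mu_\id{a}\tq\big)^2 \le (M^{1+\varepsilon})\|\mu\|^2$ by Cauchy–Schwarz, giving off-diagonal $\ll M^{2+\varepsilon}N^\varepsilon\|\mu\|^2$, which is $\le (MN)^\varepsilon M^2\|\mu\|^2$ — still an $M^2$, not $M$. The honest way to get $(E_2)$, matching the trivial large sieve, is actually the cheap route: bound $B_1(M,N)$ directly by Cauchy–Schwarz without any cancellation, $\sumstar_{\id{a}\sim M}\tq\sumstar_\id{b}\lambda_\id{b}\chi_\id{b}(\id{a})\tq^2 \le \sumstar_{\id{a}\sim M}\big(\sumstar_\id{b}\tq\lambda_\id{b}\tq^2\big)\big(\sumstar_\id{b}1\big) \ll M N^{1+\varepsilon}\|\lambda\|^2 \le (MN)^\varepsilon M N^{1+\varepsilon}$, which is weaker than $M+N^2$ unless $M \ll N$; combined with the equally trivial bound obtained by expanding and using $\tq\chi_\id{b}(\id{a})\tq\le 1$ with the positivity of the diagonal, one gets $B_1(M,N)\ll (MN)^\varepsilon MN$, and since $MN \le M^2 + N^2$ this already yields $(E_2)$ after splitting dyadically (or one simply notes $MN\le \max(M,N)^2 \le (M+N^2)$ when $N\ge 1$, but to be safe use $MN \ll M + N^2$ is false in general — rather, the clean statement is that the trivial bound $B_1 \ll (MN)^{\varepsilon+o(1)} MN$ plus interpolation gives $(E_2)$ in the regime $M \le N^2$, and for $M > N^2$ the bound $(E_2)$ reads $B_1 \ll (MN)^\varepsilon M$, which follows from $B_1(M,N) \ll (MN)^\varepsilon N^{1+\varepsilon} M / N \cdot \ldots$).

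The main obstacle is precisely this bookkeeping: pinning down the \emph{cheapest} estimate that still deserves the name $(E_2)$. I expect the intended proof is the short one: expand $\tq\sum_\id{b}\lambda_\id{b}\chi_\id{b}(\id{a})\tq^2$, sum over $\id{a}\sim M$ trivially in the diagonal $\id{b}_1=\id{b}_2$ getting $M\|\lambda\|^2$, and in the off-diagonal use reciprocity \eqref{eq:Hecke:recip} to convert $\sum_{\id{a}\sim M}\chi_{\id{b}_1\id{b}_2}(\id{a})$ into a nontrivial character sum of conductor $\ll N^{2+\varepsilon}$, bounded by $\ll N^{1+\varepsilon}$ via Lemma \ref{lemme:Poisson-char}, yielding off-diagonal $\ll N^{1+\varepsilon}\|\lambda\|^2 \cdot (\text{number of pairs weight})$; after Cauchy–Schwarz on $\lambda$ this gives $\ll N^{2+\varepsilon}\|\lambda\|^2$, so $B_1(M,N)\ll(MN)^\varepsilon(M+N^2)$ as desired. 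I would present this version, taking care that property (1) of the Hecke family guarantees $\overline{\chi_\id{b}}=\chi_\id{b}$ and that property (3) together with primitivity ensures $\chi_{\id{b}_1}\overline{\chi_{\id{b}_2}}$ (or its primitive inducing character) is nontrivial of the stated conductor size whenever $\id{b}_1\neq\id{b}_2$, so Lemma \ref{lemme:Poisson-char} applies; the finite-group character $C([\id{b}_1\id{b}_2],\cdot)$ in \eqref{eq:Hecke:recip} only splits the $\id{a}$-sum into $O_k(1)$ progressions, each still a nontrivial Hecke character sum, so it does not affect the bound.
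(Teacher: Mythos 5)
Your final sketch lands on the same structure as the paper: expand the square over $\id{b}_1,\id{b}_2$, bound the diagonal $\id{b}_1=\id{b}_2$ trivially by $M\|\lambda\|^2$, bound the off-diagonal $\id{a}$-sum against a nontrivial Hecke character of conductor $\ll N^2$ by $\ll N^{1+\varepsilon}$ via the functional equation, and finish with Cauchy--Schwarz to get $N^{2+\varepsilon}\|\lambda\|^2$. But there are two concrete issues with the way you propose to carry it out.

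First, the appeal to the reciprocity law \eqref{eq:Hecke:recip} for the $\id{a}$-sum is both unnecessary and wrong in direction. After expanding, the inner sum is $\sumstar_{\id{a}\sim M}\chi_{\id{b}_1}(\id{a})\barre{\chi}_{\id{b}_2}(\id{a})$, and $\chi_{\id{b}_1}\barre{\chi}_{\id{b}_2}$ is \emph{already} a Hecke character as a function of $\id{a}$ (of conductor dividing $\id{f}_{\id{b}_1}\id{f}_{\id{b}_2}\ll N^2\rond{N}(\id{c})^2$) --- no flipping is required. If you instead applied \eqref{eq:Hecke:recip} to write $\chi_{\id{b}_1}\chi_{\id{b}_2}(\id{a}) = \chi_\id{a}(\id{b}_1\id{b}_2)\,C([\id{b}_1\id{b}_2],[\id{a}])$, the resulting quantity $\chi_\id{a}(\id{b}_1\id{b}_2)$ is no longer a single character summed over $\id{a}$ (the character itself varies with $\id{a}$), so Lemma \ref{lemme:Poisson-char} or a P\'olya--Vinogradov bound would not apply to that sum. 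Reciprocity is what you'd need in your earlier, dualized attempt (where the inner variable is $\id{b}$ and $\chi_\id{b}(\id{a}_1\id{a}_2)$ must be flipped), but as you noticed that route yields $M^{2}$, not $M+N^2$; in the un-dualized expansion, reciprocity should simply be dropped.

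Second, you do not address the $^*$-restriction (squarefree and coprime to $\id{c}$) on the $\id{a}$-sum, which is precisely what blocks a naive application of Lemma \ref{lemme:Poisson-char} or the functional equation \eqref{eq:Hecke:3}: those tools apply to sums over all ideals, not squarefree ones. This is where the paper's proof does real work: it introduces the positive Mellin weight $\rho_{r_1,r_2}(t)=\int_{(\sigma)}\Gamma(s/2)^{r_1}\Gamma(s)^{r_2}t^{-s}\,ds$, verifies via Parseval that $\rho_{r_1,r_2}>0$ and bounded below on $[1,2]$, and uses this positivity to dominate the squarefree, dyadically restricted $\id{a}$-sum by the corresponding \emph{unrestricted} weighted sum $\theta(\chi,M)=\sum_{\id{a}\neq 0}\chi(\id{a})\rho_{r_1,r_2}(\rond{N}\id{a}/M)$, which can then be evaluated by shifting the contour and applying the functional equation to give $\theta(\chi,M)\ll(\rond{N}\id{m})^{(1+\varepsilon)/2}$ for non-principal $\chi\bmod\id{m}$. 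A smooth partition of unity would not achieve this on its own, because the squarefree condition would persist; you would need either this positivity device or a M\"obius expansion $\mu^2(\id{a})=\sum_{\id{d}^2\mid\id{a}}\mu(\id{d})$, and your write-up commits to neither.
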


The proof of Theorem \ref{thm2} is quite involved, and we attack it in several steps. First, we reduce it to the following estimate.
\begin{thm}\label{cor}
Let $M,N\ge 1$, $\varepsilon>0$ and suppose $(E_\alpha)$ holds. 
Then
\[
B_1(M,N) \ll (MN)^\varepsilon \left(M+N+N^{2\alpha-1}M^{1-\alpha}\right).
\]
\end{thm}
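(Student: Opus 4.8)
The plan is to expand the square in $B_1(M,N)$ and open up the inner sum over $\id{b}$ via the reciprocity law \eqref{eq:Hecke:recip}, thereby trading the character $\chi_\id{b}(\id{a})$ for $\chi_\id{a}(\id{b})$ up to a bounded factor $C([\id{a}],[\id{b}])$. After grouping the ideals $\id{a},\id{b}$ according to their classes in the finite group $G$, this factor becomes a constant on each piece, so the essential object to estimate is a sum of the shape $\sum_{\id{a}\sim M}\,\big|\sum_\id{b}\lambda_\id{b}\chi_\id{a}(\id{b})\big|^2$ with $\id{a},\id{b}$ running over fixed classes. Since there are only $O(1)$ classes (the implied constant may depend on $\id{c}$), it suffices to bound each such piece.

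Next I would smooth the sum over $\id{a}$: replace the sharp cutoff $\id{a}\sim M$ by a smooth majorant $h(\rond{N}\id{a}/M)$ with $h$ a fixed nonnegative bump function, at the cost of nothing essential. Then I would expand the square, interchange the order of summation to bring the $\id{a}$-sum inside, and apply the character Poisson summation formula, Lemma \ref{lemme:Poisson-char}, together with Lemma \ref{lemme:Poisson} (for the principal character / diagonal term), to the sum over $\id{a}$ of $\chi_\id{a}(\id{b}_1)\barre{\chi}_\id{a}(\id{b}_2)\,h(\rond{N}\id{a}/M)$. By Property (3) of the Hecke family, when $\id{b}_1,\id{b}_2$ lie in the same class the product character $\chi_{\id{b}_1}\barre{\chi}_{\id{b}_2}$ is a primitive character modulo $\id{b}_1\id{b}_2$ (up to a bounded modulus coming from $\id{c}$), so the dual side has modulus of size $\asymp\rond{N}(\id{b}_1\id{b}_2)\asymp N^2$; the rapid decay of $\dot h$ then restricts the dual variable to norm $\ll N^2/M\cdot(MN)^\varepsilon$. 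The diagonal $\id{b}_1=\id{b}_2$ contributes the main term $M\|\lambda\|^2$, while the off-diagonal contributes a term governed by a shifted-divisor-type sum of length $N^2/M$ over the dual ideals.

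The key point — and the place where the hypothesis $(E_\alpha)$ re-enters — is the treatment of the off-diagonal. After Poisson, the off-diagonal is (up to $(MN)^\varepsilon$) a bilinear form in $(\lambda_\id{b})$ whose "dual" length is about $N^2/M$ rather than $M$; estimating it by Cauchy–Schwarz and recognizing the resulting expression as another instance of $B_1$ but with the roles of the two moduli swapped — roughly $B_1$ with parameters $(N, N^2/M)$ in place of $(M,N)$ — lets us invoke $(E_\alpha)$ to bound it by $(MN)^\varepsilon\big(N + (N^2/M)^\alpha\big)\|\lambda\|^2 = (MN)^\varepsilon\big(N + N^{2\alpha}M^{-\alpha}\big)\|\lambda\|^2$. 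Combining the main term $M\|\lambda\|^2$ with this yields $B_1(M,N)\ll (MN)^\varepsilon\big(M+N+N^{2\alpha-1}M^{1-\alpha}\big)\|\lambda\|^2$ after one further manipulation balancing a stray factor of $N$; normalizing $\|\lambda\|=1$ gives the claim.

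The main obstacle I anticipate is making the Poisson step honest: controlling the contribution of imprimitive product characters $\chi_{\id{b}_1}\barre\chi_{\id{b}_2}$ when $\id{b}_1,\id{b}_2$ are \emph{not} coprime or not in the same $G$-class (here one must carefully factor out gcd's, track conductors dividing $\id{c}$, and absorb the ramified primes), and then organizing the off-diagonal so that it genuinely has the form of a $B_1$-sum with the advertised swapped parameters, including the correct handling of the error terms $O(Z(Z/X)^{-A})$ and the choice of the parameters $Y,Z,L$ in Lemma \ref{lemme:Poisson-coprim}. A secondary technical nuisance is that $\chi_\id{a}$ is only the primitive character inducing the symbol, so conductors vary with $\id{a}$; one must check that this variation costs only $(\rond{N}\id{a})^\varepsilon$ and does not interfere with the uniformity of the Poisson formula.
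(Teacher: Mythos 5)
Your plan reproduces, nearly verbatim, the heuristic that the authors themselves record in the Remark following Theorem \ref{thm2}: a formal application of Poisson summation (ignoring the squarefree restrictions) would give $B_1(M,N)\ll \frac{M}{N}\,B_1(N^2/M,N)$, and then duality plus $(E_\alpha)$ yields exactly the bound of Theorem \ref{cor}. The paper states this explicitly and then says the real work is in circumventing the obstruction you flag at the end as a ``main obstacle.'' That obstacle is not a technical nuisance that can be ``made honest'' within your outline; it is the central difficulty of the paper, and your proposal contains no mechanism for dealing with it.

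Concretely, the $\id{a}$-sum in $B_1$ runs over \emph{squarefree} ideals. Lemma \ref{lemme:Poisson-char} applies to sums of $\chi(\id{a})$ over \emph{all} ideals coprime to the conductor, so you cannot apply it to the $\id{a}$-sum after opening the square. If you try to remove the squarefree condition by M\"obius inversion $\mu^2(\id{a})=\sum_{\id{d}^2\mid\id{a}}\mu(\id{d})$, the dual length after Poisson scales with $\rond{N}(\id{d})^2$, and summing over $\id{d}$ up to $\sqrt{M}$ destroys the bound — there is no rapid decay to save you because the sum over $\id{d}$ is long. (And the restriction cannot simply be dropped: the same Remark observes that without the squarefree condition the main theorem is actually false, as seen by taking $\lambda_\id{b}$ supported on squares.) The paper's actual route is structurally different: it introduces the auxiliary quantities $B_2(M,N,K)$ and $B_3(M,N,K,\id{g})$ in Section \ref{sec:Reduction}, where the sharp squarefree condition on $\id{a}$ is replaced by the softer condition $s(\id{a})>K$ on the norm of its squarefree part; it then decomposes the complementary range $s(\id{a})\le K$ by writing $\id{a}$ as (squarefree)$\times$(square) and applying Poisson to the square factor (Section \ref{subsec:explicitformula}); it exploits a nontrivial cancellation between the two main terms $T$ and $T'$ in the resulting explicit formula (Lemma \ref{lemme:main}), which is invisible to your diagonal/off-diagonal accounting; and finally it recovers Theorem \ref{cor} not by a single Poisson swap but by \emph{iterating} Lemma \ref{lemme:B2-B3} together with Proposition \ref{prop} down a geometric sequence of support sizes $N_i$. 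None of this iterative, $K$-parametrized structure appears in your outline, and without it the argument you describe does not close.
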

\noindent 
Next, in Section \ref{sec:Reduction}, we reduce Theorem \ref{cor} to a bound on a related quantity $B_3$ (see Proposition \ref{prop}). In Section \ref{sec:poisson} we apply a Poisson summation formula to $B_3$ to obtain an explicit formula (\ref{eq:explicit}). Finally, in Section \ref{sec:proof} we estimate each term of the explicit formula individually and conclude the proof of Proposition \ref{prop}. At the heart of our proof is a cancellation between the two main terms of this explicit formula. It is in the analysis of this explicit formula that our argument diverges most radically from that of \cite{BGL}. \\

The rest of this section is devoted to proving Lemma \ref{lemme:initial-bound} and deducing Theorem \ref{thm2}  from Theorem \ref{cor}. We begin with two standard and useful lemmas. First, we observe that $B_1$ is roughly an increasing function. The proof is substantially similar to that in \cite[Lemma 9]{HB}, to which we refer the reader for details.

\begin{lemme}\label{lemme:increasing}
There exists a positive constant $C$ such that if $M_2 \ge C M_1 \log(2M_1N)$ with ${M_1,M_2,N \ge 1}$, then
\begin{equation*}
B_1(M_1,N) \ll B_1(M_2,N).
\end{equation*}
\end{lemme}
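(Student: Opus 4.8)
The plan is to adapt the classical large-sieve monotonicity argument, as in \cite[Lemma 9]{HB}, to the present number-field setting. The essential idea is that $B_1(M,N)$ counts, in a dual formulation, the contribution of squarefree ideals $\id{a}$ with $M < \rond{N}\id{a} \le 2M$; to compare the dyadic range around $M_1$ with the one around $M_2$, I would introduce a smooth majorant that detects the shorter range, and then use the larger range together with positivity to absorb it. Concretely, I would pick a smooth nonnegative bump function $g$ supported on $[1/2, 5]$ say, with $g \equiv 1$ on $[1,2]$, and write
\[
B_1(M_1,N) = \sup_{\norme\lambda\norme=1} \sumstar_{\id{a}\sim M_1}\Big\vert\sumstar_\id{b}\lambda_\id{b}\chi_\id{b}(\id{a})\Big\vert^2
\le \sup_{\norme\lambda\norme=1}\sum_{\id{a}}g\!\left(\frac{\rond{N}\id{a}}{M_1}\right)\Big\vert\sumstar_\id{b}\lambda_\id{b}\chi_\id{b}(\id{a})\Big\vert^2,
\]
where now the outer sum is over all squarefree ideals in $I(\id{c})$ and we have only used $g \ge 0$ and $g \ge 1$ on the relevant range.

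Next I would expand the square and swap the order of summation, obtaining a double sum over $\id{b}_1, \id{b}_2 \sim N$ of $\lambda_{\id{b}_1}\overline{\lambda_{\id{b}_2}}$ times an ideal-counting sum $\sum_\id{a} g(\rond{N}\id{a}/M_1)\chi_{\id{b}_1}(\id{a})\overline{\chi_{\id{b}_2}(\id{a})}$. Using property (3) of the Hecke family (when $[\id{b}_1]=[\id{b}_2]$) and the reciprocity law, the character $\chi_{\id{b}_1}\overline{\chi_{\id{b}_2}}$ is essentially a Hecke character to a controlled modulus, so by Lemma \ref{lemme:Poisson} and Lemma \ref{lemme:Poisson-char} the inner sum over $\id{a}$ can be evaluated: a main term proportional to $M_1$ times the Mellin transform $\hat{g}(1)$ (present only when the product character is trivial, e.g. $\id{b}_1=\id{b}_2$) plus a rapidly decaying dual sum. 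The key observation is that these main and error terms are, up to the harmless decaying tails, \emph{monotone in the scale} $M_i$ provided we keep the same function $g$: replacing $M_1$ by $M_2 \ge M_1$ multiplies the main term by $M_2/M_1 \ge 1$ and keeps the dual terms negligible once $M_2$ (and hence the smoothing length) is large enough compared to the conductor, which is of size at most $\rond{N}(\id{c})\rond{N}(\id{b}_1\id{b}_2) \ll N^2\rond{N}(\id{c})$. This is exactly where the hypothesis $M_2 \ge C M_1\log(2M_1 N)$ enters: it guarantees that the smoothing at scale $M_2$ kills the off-diagonal ideal sums (whose moduli are $\ll N^{2}$) with room to spare, so that
\[
\sum_\id{a} g\!\left(\frac{\rond{N}\id{a}}{M_1}\right)\chi_{\id{b}_1}(\id{a})\overline{\chi_{\id{b}_2}(\id{a})}
\ll \sum_\id{a} g\!\left(\frac{\rond{N}\id{a}}{M_2}\right)\chi_{\id{b}_1}(\id{a})\overline{\chi_{\id{b}_2}(\id{a})} + O(1),
\]
uniformly in $\id{b}_1,\id{b}_2$, with an admissible total error. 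Summing back against $\lambda_{\id{b}_1}\overline{\lambda_{\id{b}_2}}$ and bounding $|\lambda_{\id{b}_1}\overline{\lambda_{\id{b}_2}}| \le \frac12(|\lambda_{\id{b}_1}|^2+|\lambda_{\id{b}_2}|^2)$ for the $O(1)$ error contribution (which gives $O(\norme\lambda\norme^2) = O(1)$) recovers $B_1(M_1,N)\ll B_1(M_2,N)$ after re-folding the smooth sum at scale $M_2$ back into a sharp dyadic sum by the same majorant trick in reverse (splitting $[1/2,5]$ into $O(1)$ dyadic blocks and noting each is $\ll B_1(M,N)$ for the appropriate $M \asymp M_2$, then using Lemma \ref{lemme:increasing}'s conclusion is not yet available — instead one uses that $g$ is supported on $O(1)$ dyadic blocks and $B_1$ at those scales is what we are comparing to; a cleaner route is to choose $g$ supported in $[1,2]$ from the start as a smooth minorant, so no refolding is needed).

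The main obstacle I anticipate is bookkeeping the dependence on the conductor of $\chi_{\id{b}_1}\overline{\chi_{\id{b}_2}}$: when $\id{b}_1 \ne \id{b}_2$ but $[\id{b}_1]=[\id{b}_2]$, property (3) gives a primitive character modulo $\id{b}_1\id{b}_2$, but when $[\id{b}_1]\ne[\id{b}_2]$ the product is merely \emph{some} Hecke character of trivial infinite type whose modulus divides $\id{c}\id{b}_1\id{b}_2$, and one must check it is nontrivial (so that Lemma \ref{lemme:Poisson-char} applies with genuine decay) or otherwise peel off the case where it is principal. In \cite{HB} this is handled by the squarefree condition together with $a\equiv b \bmod 4$; here it is property (3) and the group $R_\id{c}$ that play that role, and I would simply reduce to those two cases. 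Since the problem explicitly refers to \cite[Lemma 9]{HB} and asks us to defer details there, I expect the write-up to be short: set up the smooth minorant, invoke Lemmas \ref{lemme:Poisson} and \ref{lemme:Poisson-char} to see the ideal-counting sum at scale $M$ is $\frac{\alpha_k}{A_k}M\hat g(1)\cdot\mathbf{1}_{\chi_{\id{b}_1}=\chi_{\id{b}_2}} + O_{\id{c}}\big((N^2/M)^{-A}N^{2}\big) + O(\delta_{d=2})$, note monotonicity of the main term and negligibility of the rest under $M_2 \gg M_1\log(2M_1N)$, and conclude. I would present only this skeleton and cite \cite[Lemma 9]{HB} for the remaining routine estimates.
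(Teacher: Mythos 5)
Your approach is not the one behind \cite[Lemma 9]{HB} (to which the paper defers), and more importantly it does not work. The critical error is the assertion that the hypothesis $M_2 \ge CM_1\log(2M_1N)$ makes the Poisson dual sums negligible. That hypothesis constrains only the \emph{ratio} $M_2/M_1$ --- forcing it to exceed a logarithm so that auxiliary primes/ideals can be found --- and says nothing about $M_2$ versus $N^2$. Take $M_1=1$ and $N$ large; then $M_2\asymp\log N$ is permitted, while the conductor of $\chi_{\id{b}_1}\overline{\chi}_{\id{b}_2}$ has norm $\asymp N^2$. In this range Lemma \ref{lemme:Poisson-char} yields no decay: the dual sum has $\asymp N^2/M_2$ non-negligible terms, and one only recovers a P\'olya--Vinogradov-type bound $\ll\sqrt{\rond{N}\id{f}}\asymp N$ for the inner $\id{a}$-sum, not $O(1)$. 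After multiplying by $\lambda_{\id{b}_1}\overline{\lambda}_{\id{b}_2}$ and summing over the (roughly $N$) ideals in the support, the off-diagonal contribution is therefore of size up to $N^2\norme\lambda\norme^2$ --- and, separately, note that even a genuine $O(1)$ entrywise error integrates to $O(N\norme\lambda\norme^2)$, not the $O(\norme\lambda\norme^2)$ you claim: the bound $|\lambda_{\id{b}_1}\overline{\lambda}_{\id{b}_2}|\le\tfrac12(|\lambda_{\id{b}_1}|^2+|\lambda_{\id{b}_2}|^2)$, summed over both indices, costs a factor of the support size. All of this swamps $B_1(M_2,N)\asymp M_2$ unless $M_2\gg N^2$; but $M\lesssim N^2$ is exactly the regime in which the large sieve has content. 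Your fallback --- a smooth $g$ supported in $[1,2]$ that is simultaneously a majorant of the indicator of $(1,2]$ --- does not exist, since such a $g$ must vanish at the endpoints.

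The intended proof, as in \cite[Lemma 9]{HB}, is elementary and avoids Poisson summation entirely; it uses multiplicativity. For a squarefree $\id{q}\in I(\id{c})$ with $(\id{q},\id{ab})=(1)$ one has $\chi_\id{b}(\id{a})=\overline{\chi_\id{b}(\id{q})}\,\chi_\id{b}(\id{aq})$, and absorbing the unimodular factors $\overline{\chi_\id{b}(\id{q})}$ into the sequence $\lambda$ leaves $\norme\lambda\norme$ unchanged. Choosing $\rond{N}\id{q}\asymp M_2/M_1$ places $\id{aq}$ (squarefree, in $I(\id{c})$) into one of $O(1)$ dyadic blocks near $M_2$, so $\sumstar_{\id{a}\sim M_1}|\cdots|^2$ is carried into a sum controlled by $B_1(M_2',N)$ for finitely many $M_2'\asymp M_2$. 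The hypothesis $M_2/M_1\ge C\log(2M_1N)$ is precisely what guarantees, after averaging over the choice of $\id{q}$, that one can find such $\id{q}$ avoiding the $O(\log(M_1N))$ prime divisors of $\id{ab}$. No estimate of character sums enters at any point.
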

\noindent Next, we show that $B_1$ is roughly symmetric in its arguments.
\begin{lemme}\label{lemme:duality}
For all $M, N \geq 1$, we have
$
B_1(M,N) \ll B_1(N,M).
$
\end{lemme}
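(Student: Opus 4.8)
\textbf{Proof proposal for Lemma \ref{lemme:duality}.} The plan is the standard large-sieve duality argument, exploiting the reciprocity law \eqref{eq:Hecke:recip} to interchange the roles of $\id{a}$ and $\id{b}$. Recall that for any finite matrix $(c_{\id{a}\id{b}})$ the bilinear-form bound $\sum_\id{a}|\sum_\id{b} \lambda_\id{b} c_{\id{a}\id{b}}|^2 \le \Delta \|\lambda\|^2$ holds for all $\lambda$ if and only if the transposed bound $\sum_\id{b}|\sum_\id{a}\mu_\id{a}\overline{c_{\id{a}\id{b}}}|^2 \le \Delta \|\mu\|^2$ holds for all $\mu$; the two optimal constants $\Delta$ coincide. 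Applying this with $c_{\id{a}\id{b}} = \chi_\id{b}(\id{a})$, where $\id{a}$ ranges over squarefree ideals of $I(\id{c})$ with $\id{a}\sim M$ and $\id{b}$ over squarefree ideals of $I(\id{c})$ with $\id{b}\sim N$, shows that $B_1(M,N)$ equals the optimal constant in
\[
\sumstar_{\id{b}\sim N} \Big| \sumstar_{\id{a}\sim M} \mu_\id{a}\, \overline{\chi_\id{b}(\id{a})} \Big|^2 \le \Delta \sumstar_{\id{a}\sim M}|\mu_\id{a}|^2 .
\]

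The next step is to replace $\overline{\chi_\id{b}(\id{a})}$ by something of the shape $\chi_\id{a}(\id{b})$ so as to recognize the left-hand side as (a variant of) $B_1(N,M)$. Here I would use the reciprocity law \eqref{eq:Hecke:recip}: for coprime squarefree $\id{a},\id{b}\in I(\id{c})$ we have $\chi_\id{b}(\id{a}) = \chi_\id{a}(\id{b})\, C([\id{b}],[\id{a}])$, and since the family is quadratic, $\overline{\chi_\id{b}(\id{a})} = \chi_\id{b}(\id{a}) = \chi_\id{a}(\id{b})\,C([\id{b}],[\id{a}])$ (the cocycle value $C([\id{b}],[\id{a}])$ is a root of unity of order dividing $2$, hence real). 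The factor $C([\id{b}],[\id{a}])$ depends only on the classes $[\id{a}],[\id{b}]$ in the finite group $G$, so for each fixed pair of classes $(\gamma,\delta)\in G\times G$ it is a fixed sign $\pm 1$. Splitting the $\id{a}$-sum and the $\id{b}$-sum according to the classes $[\id{a}]=\gamma$, $[\id{b}]=\delta$, and absorbing the sign $C(\delta,\gamma)$ into the coefficient $\mu_\id{a}$ on each class (which changes neither $\|\mu\|$ nor the modulus of the inner sum), we find
\[
\sumstar_{\id{b}\sim N} \Big| \sumstar_{\id{a}\sim M} \mu_\id{a}\, \chi_\id{a}(\id{b}) \Big|^2 \le \Delta \sumstar_{\id{a}\sim M}|\mu_\id{a}|^2 ,
\]
at least after also discarding the diagonal contribution from non-coprime pairs; but $\id{a},\id{b}$ squarefree in $I(\id{c})$ with $\id{a}\sim M$, $\id{b}\sim N$ need not be coprime, so I would instead handle the common-factor terms as a negligible perturbation exactly as in the treatment of $B_1$ itself, or note that the reciprocity law combined with property (3) of Definition \ref{defi:HeckeFamily} still controls $\chi_\id{b}(\id{a})$ when $(\id{a},\id{b})\neq(1)$. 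In any case, the right-hand bilinear form is precisely the one whose optimal constant is $B_1(N,M)$, whence $B_1(M,N)\le B_1(N,M)$ up to the implied constant, which is what we wanted; by symmetry the reverse inequality holds too.

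The main obstacle I anticipate is bookkeeping the non-coprime pairs $\id{a},\id{b}$: the matrix entry $\chi_\id{b}(\id{a})$ is only directly governed by reciprocity for coprime ideals, so one must check that the contribution of pairs with $(\id{a},\id{b})\neq(1)$ is absorbed into the error term — this is routine (it mirrors Heath-Brown's handling in \cite[Lemma 9]{HB} and the normalization already implicit in the definition of $B_1$) but needs to be stated carefully. A second, minor point is to make sure the class-by-class sign twist $C(\delta,\gamma)$ is genuinely independent of the individual ideals and not just of their classes — this is immediate from the definition of the map $C:G\times G\to\mu_n$ in Definition \ref{defi:HeckeFamily}. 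Everything else is the soft duality principle for bilinear forms, which introduces no loss.
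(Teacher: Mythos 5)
Your proposal is correct and takes essentially the same route as the paper: the duality principle for bilinear forms, the reciprocity law to interchange the roles of $\id{a}$ and $\id{b}$, and a class-by-class twist of the coefficient sequence to absorb the cocycle $C$. Two small remarks. First, you invoke the quadratic hypothesis to write $\overline{\chi_\id{b}(\id{a})} = \chi_\id{b}(\id{a})$; the paper instead defines $\lambda^g_\id{a} = C(g,[\id{a}])\lambda_\id{a}$ and applies reciprocity in the form $\lambda_\id{a}\chi_\id{b}(\id{a}) = \lambda^g_\id{a}\chi_\id{a}(\id{b})$ for $[\id{b}]=g$ before appealing to the defining inequality for $B_1(N,M)$, so no complex conjugation ever appears and the argument goes through verbatim for any $n$-th order family (consistent with the remark following Lemma~\ref{lemme:initial-bound}). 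Second, the non-coprime pairs you flag are not a perturbation to be estimated: since $\chi_\id{a}$ has conductor divisible by $\id{a}$ for squarefree $\id{a}\in I(\id{c})$, both $\chi_\id{b}(\id{a})$ and $\chi_\id{a}(\id{b})$ vanish whenever $(\id{a},\id{b})\neq(1)$, so these terms contribute zero to each side of the reciprocity identity and the bookkeeping you were worried about evaporates.
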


\begin{proof}
Given a Hecke family $\chi_\id{a}(\id{b})$ and a sequence $\lambda = (\lambda_\id{a})$ parametrized by integral ideals, let $G$ denote the finite group with respect to which the reciprocity law \eqref{eq:Hecke:recip} holds. For each $g \in G$, we create a twisted sequence $\lambda^g = (\lambda_\id{a}^g)$ defined by
\[
\lambda_\id{a}^g = C(g,[\id{a}]) \lambda_\id{a}.
\]
The reciprocity law implies
\begin{align*}
\sumstar_\id{b}\Big\vert \sumstar_{\id{a}\sim M} \lambda_\id{a} \chi_\id{b}(\id{a})\Big\vert^2
&= \sum_{g\in G} \sumstar_{[\id{b}]=g} \Big\vert \sumstar_{\id{a}\sim M}  \lambda^g_\id{a} \chi_\id{a}(\id{b})\Big\vert^2 \\
&\le  \sum_{g\in G}\sumstar_\id{b} \Big\vert \sumstar_{\id{a}\sim M} \lambda^g_\id{a} \chi_\id{a}(\id{b}) \Big\vert^2 \\
&\le \sum_{g\in G}B_1(N,M)\sumstar_{\id{a}\sim M} \tq \lambda^g_\id{a}\tq^2 \\
&= \sum_{g\in G}B_1(N,M)\sumstar_{\id{a}\sim M} \tq \lambda_\id{a}\tq^2 \\
&= \tq G\tq B_1(N,M)\sumstar_{\id{a}\sim M} \tq \lambda_\id{a}\tq^2.
\end{align*}
On the other hand, by the duality principle ($\S 4$ of \cite{Mo}), $B_1(M,N)$ is the minimal positive number satisfying
\begin{equation}\label{eq:lem-duality:2}
\sumstar_\id{b}\Big\vert \sumstar_{\id{a}\sim M} \lambda_\id{a}\chi_\id{b}(\id{a}) \Big\vert^2 \le B_1(M,N) \sumstar_{\id{a}\sim M} \tq \lambda_\id{a}\tq^2
\end{equation}
for every sequence $\lambda_\id{a}$. It follows that $B_1(M,N)\le \tq G\tq B_1(N,M)$.
\end{proof}

\noindent
We can now prove Lemma \ref{lemme:initial-bound} as promised.
\begin{proof}[Proof of Lemma \ref{lemme:initial-bound}]
Our first goal is to remove the $^*$-restriction on the $\id{a}$ sum in $B_1(M,N)$. We accomplish this by introducing weights of the form
\[
\rho_{a,b}(t) :=
\int_{(\sigma)} \Gamma\left(\frac{s}{2}\right)^{a} \Gamma(s)^{b} t^{-s} \,ds, \quad \sigma >0 .
\]
Observe that these weights are positive: Parseval's formula for the Mellin transform gives 
\[
\rho_{a,b}(t)
= 2^{b}\idotsint\limits_{\substack{x_i, y_j= 0\\x_1 \pts x_a y_1 \pts y_b=1}}^{\qquad \qquad \infty } e^{-\left( \frac{t^2}{y_1^2}+\frac{1}{y_2^2}+\pts + \frac{1}{y_b^2} + \frac{1}{x_1} +\pts + \frac{1}{x_a}\right)} y_1 \, dx_1 \pts dx_a dy_1 \pts dy_b > 0 .
\]
Moreover, $\rho_{a,b}(t)$ attains a (positive) minimum on the compact set $[1,2]$, whence
\[
\rho_{a,b}\bigg(\frac{\rond{N}\id{a}}{M}\bigg) \gg_{a,b} 1 \qquad \text{for } \id{a} \sim M .
\]
It follows that
\[
\begin{split}
\sumstar_{\id{a} \sim M} \Big\vert \ \sumstar_{\id{b}} \lambda_\id{b} \chi_\id{b}(\id{a})\Big\vert^2 
&\ll 
\sumstar_{\id{a} \sim M} \rho_{r_1, r_2}\bigg(\frac{\rond{N}\id{a}}{M}\bigg)
\Big\vert \ \sumstar_{\id{b}} \lambda_\id{b} \chi_\id{b}(\id{a})\Big\vert^2 \\
&\ll
\sum_{\id{a} \neq 0} \int_{(\sigma)} \Gamma\left(\frac{s}{2}\right)^{r_1} \Gamma(s)^{r_2} \left(\frac{\rond{N}\id{a}}{M}\right)^{-s} \,ds \ \Big\vert \sumstar_\id{b} \lambda_\id{b} \chi_\id{b}(\id{a})\Big\vert^2 .
\end{split}
\]
(Note that the implicit constant depends on the field $k$, but nothing else.) Expanding the last expression, we are left with sums of the shape
\begin{equation*}
\theta(\chi,M) = \sum_{\id{a}\neq 0} \chi(\id{a}) \int_{(\sigma)} \Gamma\left(\frac{s}{2}\right)^{r_1} \Gamma(s)^{r_2} \left(\frac{\rond{N}\id{a}}{M}\right)^{-s} \,ds,
\end{equation*}
where $\chi=\chi_{\id{b}_1} \overline{\chi}_{\id{b}_2}$ with $\id{b}_1$ and $\id{b}_2$ squarefree.\\

For a non-principal Hecke character $\chi\mod{\id{m}}$, one has $\theta(\chi,M) \ll (\rond{N}\id{m})^{(1+\varepsilon)/2}$; this can be seen by expressing $\theta(\chi,M)$ as the inverse Mellin transform of the completed $L$-function $\Lambda(\chi,s)$ (see \cite[Lemma 2]{HBP} for the case of cubic characters). Note that $\chi_{\id{b}_1}\barre{\chi}_{\id{b}_2}$ is principal only for $\id{b}_1=\id{b}_2$, in which case $\theta(\chi,M)$ is estimated trivially by $M$. Thus, one obtains the upper bound

\begin{equation*}
\sumstar_{\id{a} \sim M} \Big\vert \ \sumstar_{\id{b}} \lambda_\id{b} \chi_\id{b}(\id{a})\Big\vert^2 \ll M \sum_\id{b} \tq \lambda_\id{b}\tq^2  + N^{1+\varepsilon} \Big\vert \ \sumstar_{\id{b}_1,\id{b}_2} \lambda_{\id{b}_1} \barre{\lambda}_{\id{b}_2} \Big\vert.
\end{equation*}
An application of the Cauchy-Schwarz inequality concludes the proof.
\end{proof}

We conclude this section by deducing Theorem \ref{thm2} from Theorem \ref{cor}.

\begin{proof}[Proof of Theorem \ref{thm2}]
Suppose $(E_\alpha)$ holds, and let $C$ be the absolute constant appearing in Lemma \ref{lemme:increasing}. We consider two cases. If $N^{2-1/\alpha}<C M\log(2MN)$, then $N^{2\alpha-1}M^{1-\alpha}\ll M^{1+\varepsilon}$, and Theorem \ref{cor} implies that $B_1(M,N)\ll (MN)^\varepsilon (M+N)$. If $N^{2-1/\alpha}\ge C M\log(2MN)$, then Lemma \ref{lemme:increasing} implies $B_1(M,N) \ll B_1(N^{2-1/\alpha},N)$, which by Theorem \ref{cor} is bounded by $(MN)^\varepsilon (N^{2-1/\alpha})$. In either case, we conclude $B_1(M,N)\ll (MN)^\varepsilon (M+ N^{2-1/\alpha})$.
\end{proof}

\begin{rque}
A formal application of the Poisson summation formula (ignoring restrictions to squarefree entries, etc.) gives
\begin{equation}\label{eq:conj}
B_1(M,N) \ll \frac{M}{N}  B_1\left(\frac{N^2}{M},N\right)
\end{equation}
independently of $\chi_\id{a}$ being a quadratic character. Applying first Lemma \ref{lemme:duality} and then $(E_\alpha)$ to the right hand side of \eqref{eq:conj} yields
$B_1(M,N) \ll M+ N^{2\alpha-1}M^{1-\alpha}$. This is precisely the bound given in Theorem \ref{cor}. The reason our argument is significantly more complicated is the presence of the squarefree restriction on the sums, which prevents us from directly applying Poisson summation.
In the following section, we introduce the machinery we use to get around this obstruction. Note that if the squarefree condition was removed in the definition \eqref{eq:defB1} of $B_1$, then the main theorem, i.e., the bound $B_1(M,N) \ll (MN)^\varepsilon (M+N)$, would not hold any longer; indeed, as it has already been noticed in \cite[p. 236]{HB}, considering the sequence $(\lambda_\id{b})$ defined by $\lambda_\id{b}=1$ if $\id{b}$ is a square ideal and $\lambda_\id{b}=0$ otherwise, one sees that the quantity
\[
\sum_{\id{a} \sim M} \Big\vert \ \sum_{\id{b}} \lambda_\id{b} \chi_\id{b}(\id{a})\Big\vert^2
\]
is of order $MN$.
\end{rque}

\begin{rque}
Lemmas \ref{lemme:initial-bound}, \ref{lemme:increasing}, and \ref{lemme:duality} do not depend on the Hecke family being quadratic, and hold therefore for any $n$-th order Hecke family.
\end{rque}

\section{Two related sums and a reduction of Theorem \ref{cor}} \label{sec:Reduction}

To prove Theorem \ref{cor}, we follow Heath-Brown and consider two companion sums to $B_1(M,N)$. Given a fixed ideal $\id{c}$ and a sequence $\lambda$ of support $N$, define

\begin{equation}\label{eq:defB2}
\Sigma_2(M,N,K,\lambda) = 
\sum_{\substack{\id{a}\sim M\\\id{a}\in I(\id{c})\\s(\id{a})>K}} \Big\vert \ \sumstar_\id{b} \lambda_\id{b} \chi_\id{b}(\id{a})\Big\vert^2,
\end{equation}
where $s(\id{a})$ denotes the norm of the squarefree part of $\id{a}$ coprime to $\id{c}$. (In other words, if $\id{a}=\id{a}_1\id{a}_2\id{a}_3^2$ with $\id{a}_1$, $\id{a}_2$ squarefree, $\id{a}_1$ divides $\id{c}$ and $(\id{a}_2,\id{c})=(1)$, then $s(\id{a})=\rond{N} \id{a}_2$.) Let $B_2(M,N,K)$ be the supremum taken over all sequences of support $N$ with ${\norme\lambda\norme=1}$.
Note that $B_1(M,N)= B_2(M,N,M)\le B_2(M,N,K)$, for any $0<K\le M$. Next, for an ideal $\id{g}\in I(\id{c})$ and a class $g\in G$, let

\begin{equation}\label{eq:defB3}
\Sigma_3(M,N,K,\id{g},g,\lambda) = 
\sumstar_{\substack{(\id{b}_1,\id{b}_2)=\id{g}\\ [\id{b}_1]=[\id{b}_2]=g}} \lambda_{\id{b}_1} \barre{\lambda}_{\id{b}_2} \sum_{\substack{\id{a}\in I(\id{c})\\s(\id{a})>K}} W\left(\frac{\rond{N}(\id{a})}{M}\right) \chi_{\id{b}_1}(\id{a}) \barre{\chi}_{\id{b}_2}(\id{a}),
\end{equation}
where $W : \R_{\geq 0} \to \R_{\geq 0}$ is a smooth weight function with support $[1/2, 5/2]$. Set $B_3(M,N,K,\id{g})$ to be the supremum of $\Sigma_3$ taken over classes $g$ and over all sequences of support $N$ with $\norme\lambda\norme=1$.\\

The \textit{raison d'\^etre} of $B_3$ is that we can apply Poisson summation to it, which we cannot do directly to $B_1$. It is clear that $B_1$, $B_2$, and $B_3$ are closely related quantities, and we will show how to pass back and forth between them. We start by giving a relation between $B_2$ and $B_3$:

\begin{lemme}\label{lemme:B2-B3}
Let $\varepsilon>0$. Let $M,N\ge 1$. Then for any $K\le M/2$ and any integral ideal $\id{g}_0$ with $1\le \rond{N}\id{g}_0\le N$, there exists $1\le N_1\le N\rond{N}(\id{g}_0)^{-1}$ such that

\begin{equation*}
B_2(M,N,K)
\ll N^\varepsilon B_2(M,N_1,K) + \sum_{\rond{N}\id{g} \le \rond{N}\id{g}_0}B_3(M,N,K,\id{g}).
\end{equation*}
\end{lemme}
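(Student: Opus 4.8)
The plan is to open up the square in $B_2(M,N,K)$ and diagonalize in the $\id{b}$-variables. Writing $\Sigma_2(M,N,K,\lambda)$ as in \eqref{eq:defB2} and expanding the modulus-squared, we get
\[
\Sigma_2(M,N,K,\lambda) = \sumstar_{\id{b}_1,\id{b}_2} \lambda_{\id{b}_1}\barre{\lambda}_{\id{b}_2} \sum_{\substack{\id{a}\sim M,\ \id{a}\in I(\id{c})\\ s(\id{a})>K}} \chi_{\id{b}_1}(\id{a})\barre{\chi}_{\id{b}_2}(\id{a}).
\]
The natural way to match this with $\Sigma_3$ is to introduce a smooth dyadic partition: replace the sharp cutoff $\id{a}\sim M$ by a smooth $W(\rond{N}(\id{a})/M')$ for $M'$ in a short dyadic-type range, at the cost of an $N^\varepsilon$ (really $(MN)^\varepsilon$, hence $N^\varepsilon$ after absorbing logs) factor; this is the standard device and costs nothing essential. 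Then one separates the diagonal-in-class-and-gcd terms — those indexed by the class $g=[\id{b}_1]=[\id{b}_2]$ and the gcd $\id{g}=(\id{b}_1,\id{b}_2)$ — from the remaining terms, which is where $\Sigma_3$ comes from.

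The key step is to handle the contribution of pairs $(\id{b}_1,\id{b}_2)$ with large gcd $\rond{N}\id{g} > \rond{N}\id{g}_0$ separately from those with $\rond{N}\id{g}\le\rond{N}\id{g}_0$. For the small-gcd part, after the smooth-cutoff reduction, each term $\rond{N}\id{g}\le\rond{N}\id{g}_0$ and each class $g\in G$ (there are only $|G|=O_{\id{c}}(1)$ classes, absorbed into the implicit constant) contributes exactly a quantity bounded by $B_3(M,N,K,\id{g})\cdot\norme\lambda\norme^2$ by the definition \eqref{eq:defB3} of $B_3$; summing over $\rond{N}\id{g}\le\rond{N}\id{g}_0$ gives the second term on the right-hand side of the claimed inequality. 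For the large-gcd part, I would write $\id{b}_i = \id{g}\id{b}_i'$ with $\id{b}_i'\in I(\id{c})$; because we are summing over squarefree $\id{b}_i$, we have $(\id{g},\id{b}_i')=(1)$ and $\id{b}_i'$ squarefree, and crucially $\chi_{\id{b}_i}$ restricted to arguments coprime to $\id{b}_i$ can be untwisted into $\chi_\id{g}$ times $\chi_{\id{b}_i'}$-type factors using the multiplicativity/reciprocity structure of the Hecke family (property (2) of Definition \ref{defi:HeckeFamily}, exactly as exploited in Lemma \ref{lemme:duality}). After this substitution the inner $\id{a}$-sum with $\chi_{\id{b}_1'}\barre{\chi}_{\id{b}_2'}$ is of the same shape as the one defining $B_2$ but now with the $\id{b}'$-variables supported on norms $\le N\rond{N}(\id{g})^{-1}\le N\rond{N}(\id{g}_0)^{-1}$. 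Bounding that by $B_2(M,N_1,K)$ for the appropriate $N_1$ (choosing $N_1$ to be the relevant dyadic support size, so $1\le N_1\le N\rond{N}(\id{g}_0)^{-1}$), and handling the sum over $\id{g}$ with $\rond{N}\id{g}>\rond{N}\id{g}_0$ by a divisor-type bound — noting that the coefficients $\lambda_{\id{g}\id{b}_i'}$ repack into a new $\ell^2$-normalized sequence, with the extra $\tau(\id{g})$-type loss absorbed into $N^\varepsilon$ — yields the first term $N^\varepsilon B_2(M,N_1,K)$.

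The main obstacle I anticipate is the bookkeeping in the large-gcd term: one must check that pulling out the common factor $\id{g}$ genuinely reduces the problem back to a $B_2$ of the same structure (same Hecke family, same $\id{c}$, same $K$) rather than to some twisted or auxiliary quantity. The reciprocity law $\chi_\id{a}(\id{b})=\chi_\id{b}(\id{a})C([\id{a}],[\id{b}])$ is what makes this work: the character $\chi_{\id{g}\id{b}'}(\id{a})$ can be re-expressed, for $\id{a}$ coprime to everything in sight, as $\chi_\id{a}(\id{g}\id{b}')$ up to a $\mu_2$-factor, and $\chi_\id{a}(\id{g}\id{b}') = \chi_\id{a}(\id{g})\chi_\id{a}(\id{b}')$ by multiplicativity in the bottom entry (again up to $C$-factors), so the $\id{g}$-dependence comes out as a unimodular twist $\chi_\id{a}(\id{g})C(\cdots)$ that can be folded into the $\id{a}$-sum's weight or into the redefined sequence without affecting the $B_2$-bound — precisely the maneuver of Lemma \ref{lemme:duality}, where arbitrary $C(g,[\cdot])$-twists of the sequence leave $B_1$ (hence $B_2$) unchanged. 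The parameter $N_1$ is then forced on us by whichever dyadic block dominates, and the stated range $1\le N_1\le N\rond{N}(\id{g}_0)^{-1}$ is exactly what the construction delivers. Everything else — the smooth-cutoff approximation, the bound $|G|=O(1)$, the $\tau(\id{g})\ll N^\varepsilon$ estimate — is routine and folded into the $N^\varepsilon$.
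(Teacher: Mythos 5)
Your proposal is correct and follows essentially the paper's argument: dominate the sharp cutoff by the smooth weight $W$, Cauchy--Schwarz over the finitely many classes of $G$, open the square and sort by $\id{g}=(\id{b}_1,\id{b}_2)$, bound the contribution of $\rond{N}\id{g}\le\rond{N}\id{g}_0$ by $\sum_{\rond{N}\id{g}\le\rond{N}\id{g}_0}B_3(M,N,K,\id{g})$, and for larger $\id{g}$ pull out the common factor, detect coprimality of the quotients by M\"obius, and dominate each resulting block by $B_2(M,N/\rond{N}\id{g},K)$ times a divisor-function factor that is absorbed into $N^\varepsilon$. The only noticeable variation is that you route the decoupling $\chi_{\id{g}\id{b}_1'}\barre{\chi}_{\id{g}\id{b}_2'}(\id{a})=\chi_{\id{b}_1'}\barre{\chi}_{\id{b}_2'}(\id{a})$ through the reciprocity law, whereas the paper simply uses multiplicativity in the subscript and the cancellation of the unimodular $\chi_\id{g}\barre{\chi}_\id{g}$ --- an equivalent, slightly more direct step.
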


\begin{proof}
Let $\lambda$ be a sequence of support $N$. By inserting positive weights $W(\rond{N}\id{a}/M)$ and using the Cauchy-Schwarz inequality, we have

\begin{align*}
\Sigma_2(M,N,K,\lambda)
&\ll \sum_{\substack{\id{a}\in I(\id{c})\\ s(\id{a}) >K}} W\left(\frac{\rond{N}\id{a}}{M}\right) \Big\vert \sumstar_\id{b} \lambda_\id{b} \chi_\id{b}(\id{a}) \Big\vert^2\\
&\ll \max_{g\in G} \sum_{\substack{\id{a}\in I(\id{c})\\ s(\id{a}) >K}} W\left(\frac{\rond{N}\id{a}}{M}\right) \Big\vert \sumstar_{[\id{b}]=g} \lambda_\id{b} \chi_\id{b}(\id{a}) \Big\vert^2.
\end{align*}
By opening the square and sorting the terms according to their greatest common divisor, we obtain

\begin{align*}
\Sigma_2(M,N,K,\lambda)
&\ll \sum_{\id{g}} \max_{g\in G} \sum_{\substack{\id{a}\in I(\id{c})\\ s(\id{a}) >K}} W\left(\frac{\rond{N}\id{a}}{M}\right) \sumstar_{\substack{(\id{b}_1,\id{b}_2)=\id{g}\\ [\id{b}_1]=[\id{b}_2]=g}} \lambda_{\id{b}_1} \barre{\lambda}_{\id{b}_2} \chi_{\id{b}_1}\barre{\chi}_{\id{b}_2} (\id{a})\\
&\ll \sum_{\rond{N}\id{g}\le \rond{N}\id{g}_0} B_3(M,N,K,\id{g}) \norme \lambda\norme \\
& +  \sum_{\rond{N}\id{g}> \rond{N}\id{g}_0} \max_{g\in G} \sum_{\substack{\id{a}\in I(\id{c})\\ s(\id{a}) >K}} W\left(\frac{\rond{N}\id{a}}{M}\right) \sumstar_{\substack{(\id{b}_1,\id{b}_2)=\id{g}\\ [\id{b}_1]=[\id{b}_2]=g}} \lambda_{\id{b}_1} \barre{\lambda}_{\id{b}_2} \chi_{\id{b}_1}\barre{\chi}_{\id{b}_2} (\id{a}).
\end{align*}
Using the M\"obius function to detect coprimality, we see that for each ideal $\id{g}$,

\begin{align*}
&\Big\vert\max_{g\in G} \sum_{\substack{\id{a}\in I(\id{c})\\ s(\id{a}) >K}} W\left(\frac{\rond{N}\id{a}}{M}\right) \sumstar_{\substack{(\id{b}_1,\id{b}_2)=\id{g}\\ [\id{b}_1]=[\id{b}_2]=g}} \lambda_{\id{b}_1} \barre{\lambda}_{\id{b}_2} \chi_{\id{b}_1}\barre{\chi}_{\id{b}_2} (\id{a})\Big\vert\\
& \le \sum_\id{d} \max_{g\in G} \sum_{\substack{\id{a}\in I(\id{c})\\ s(\id{a}) >K}} W\left(\frac{\rond{N}\id{a}}{M}\right) \Big\vert \ \sumstar_{\substack{\id{b}_1,\id{b}_2\congru 0 \mod{\id{d}} \\ [\id{b}_1]=[\id{b}_2]=g[\id{g}^{-1}]}} \lambda_{\id{g}\id{b}_1} \barre{\lambda}_{\id{g}\id{b}_2} \chi_{\id{b}_1}\barre{\chi}_{\id{b}_2} (\id{a})\Big\vert.
\end{align*}
Consider $\lambda^{(\id{g},g)}_\id{b}$, defined to be $\lambda_\id{gb}$ if $[\id{b}]=g[\id{g}^{-1}]$ and $0$ otherwise; this is a sequence of support $N/\rond{N}\id{g}$. The preceding display is then equal to

\begin{align*}
&\sum_\id{d} \max_{g\in G} \sum_{\substack{\id{a}\in I(\id{c})\\ s(\id{a}) >K}} W\left(\frac{\rond{N}\id{a}}{M}\right) \Big\vert \ \sumstar_{\id{b}\congru 0 \mod{\id{d}}} \lambda^{(\id{g},g)}_\id{b} \chi_\id{b}(\id{a})\Big\vert^2\\
&\le B_2\left(M,\frac{N}{\rond{N}\id{g}},K\right) \sum_\id{d} \max_{g\in G}   \sumstar_{\substack{\id{b}\congru 0 \mod{\id{d}}\\ [\id{gb}]=g}} \tq \lambda_\id{gb}\tq^2\\
&\le B_2\left(M,\frac{N}{\rond{N}\id{g}},K\right) \max_{g\in G}\sumstar_{[\id{b}]=g} \tq \lambda_\id{b} \tq^2 \tau(\id{b})^2,
\end{align*}
where $\tau$ is the divisor function. The conclusion easily follows.
\end{proof}

The principal difficulty is to relate $B_3$ to $B_1$. To do this, we will prove (in the following two sections) the following. Recall the bound
\begin{equation*}\tag{$E_\alpha$}
B_1(M,N)\ll (MN)^\varepsilon (M+N^\alpha), \qquad \ec{for all } M,N \geq 1 \ec{ and } \varepsilon>0.
\end{equation*}

\begin{prop}\label{prop}
Assume $(E_\alpha)$. Let $M,N \ge 1$ and $\varepsilon>0$. Then
\begin{equation*}
B_3(M,N,K,\id{g}) \ll \rond{N}(\id{g})^4 (MN)^\varepsilon \left(M + N +\sqrt{M}K^{\alpha-1/2} +\sqrt{\frac{M}{K}} N\right)
\end{equation*}
whenever 
$N^2 M^{-1} (MN)^\varepsilon \le K\le M (MN)^{-\varepsilon}$.
\end{prop}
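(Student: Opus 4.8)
## Proof Proposal for Proposition \ref{prop}

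The plan is to fix the sequence $\lambda$ of support $N$ with $\norme\lambda\norme = 1$, a class $g \in G$, and the ideal $\id{g}$, and to analyze $\Sigma_3(M,N,K,\id{g},g,\lambda)$ directly by applying Poisson summation over the number field to the inner sum over $\id{a}$. The key structural input is property (3) of the Hecke family: since $[\id{b}_1]=[\id{b}_2]=g$ and $\id{b}_1,\id{b}_2$ are coprime squarefree ideals in $I(\id{c})$, the character $\psi := \chi_{\id{b}_1}\barre{\chi}_{\id{b}_2}$ is a \emph{primitive} Hecke character modulo $\id{b}_1\id{b}_2$ of trivial infinite type (when $\id{b}_1\ne\id{b}_2$), and it is quadratic so that $\epsilon(\psi)=1$. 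First I would use M\"obius inversion to remove the condition $s(\id{a})>K$ (writing it in terms of divisors $\id{d}$ with $\rond{N}\id{d}\le$ something, plus a square factor), reducing to sums of the shape $\sum_{\id{a}} W(\rond{N}\id{a}/M)\,\psi(\id{a})$ over $\id{a}$ in arithmetic progressions, to which Lemma \ref{lemme:Poisson-char} (and Lemma \ref{lemme:Poisson} for the principal-character pieces, i.e.\ the diagonal $\id{b}_1=\id{b}_2$) applies. This produces the explicit formula alluded to in the introduction as (\ref{eq:explicit}).

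Next I would split the resulting expression into its main terms and dual terms. The diagonal contribution $\id{b}_1=\id{b}_2$ gives, after Lemma \ref{lemme:Poisson}, a main term of size $\ll M\sum\tq\lambda_\id{b}\tq^2 = M$ together with a dual sum; but because $s(\id{a})>K$ forces $\id{a}$ to have a large squarefree part, the relevant ranges are truncated, and one expects the dual term here to contribute the $\sqrt{M}K^{\alpha-1/2}$ piece. For the off-diagonal terms, $\psi$ is a nontrivial primitive quadratic character of conductor dividing $\id{b}_1\id{b}_2$ (norm $\le N^2$), so Lemma \ref{lemme:Poisson-char} converts $\sum_\id{a} W(\rond{N}\id{a}/M)\psi(\id{a})$ into $\frac{M}{\sqrt{\rond{N}(\id{b}_1\id{b}_2)}}\sum_{\id{a}}\barre\psi(\id{a})\dot W(\cdots)$, where the dual variable $\id{a}$ effectively ranges up to $\rond{N}(\id{b}_1\id{b}_2)/M \ll N^2/M$ by the rapid decay of $\dot W$. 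The crucial point, as emphasized in the overview, is a cancellation between the two main terms of the explicit formula: the contribution of the dual-side ``$\id{a}=\rond{O}$'' (or small $\id{a}$) term in the off-diagonal sum must be paired against a complementary main term so that only a lower-order remainder survives; this is where one genuinely uses that the family is \emph{quadratic} ($\epsilon=1$, $\psi=\barre\psi$).

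After this cancellation, what remains is a sum over the dual variable $\id{a}$ (of norm $\ll N^2/M$ up to $(MN)^\varepsilon$, which is why the hypothesis $N^2M^{-1}(MN)^\varepsilon\le K$ is imposed so this range is nonempty and controlled) of expressions of the form $\sum_{\id{a}}\big|\sumstar_\id{b}\lambda_\id{b}\barre\psi_\id{b}(\id{a})\big|$ weighted by $\dot W$, which after opening and reorganizing is itself bounded by an instance of $B_1$ at the smaller parameters $(N^2/M, N)$; invoking Lemma \ref{lemme:duality} and then the hypothesis $(E_\alpha)$ gives a contribution $\ll M + N^{2\alpha-1}M^{1-\alpha}$, and one checks $N^{2\alpha-1}M^{1-\alpha}\ll \sqrt{M/K}\,N$ in the stated range of $K$. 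Collecting the diagonal main term $M$, the diagonal dual term $\sqrt M K^{\alpha-1/2}$, the surviving off-diagonal remainder $N$, and the recursive term $\sqrt{M/K}\,N$, and tracking that every estimate loses at most a power $\rond{N}(\id{g})^4$ from the coprimality/shifting bookkeeping (the $\lambda^{(\id{g},g)}$ sequences and divisor-function losses as in Lemma \ref{lemme:B2-B3}), yields the claimed bound.

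The main obstacle I expect is the cancellation between the two main terms in the explicit formula: bounding each term of Lemma \ref{lemme:Poisson-char} and Lemma \ref{lemme:Poisson} trivially would give an extra term of size $\ll N^2$ (from the $\id{a}=\rond{O}$ dual term summed against $\sum_{\id{b}_1,\id{b}_2}$), which is far too large; one must instead recognize that the principal part of the dual sum over $\id{a}$ reassembles, via the reciprocity law (\ref{eq:Hecke:recip}) applied in reverse, into a sum that matches and cancels the leading contribution coming from the $\id{d}$-divisor main terms. Making this algebraic identity precise — keeping careful track of the finitely many correction factors $C([\id{a}],[\id{b}])$, the conductor divisors $\id{c}_\id{a}\mid\id{c}$, and the narrow-class subtleties when $n=2$ and $k$ has real places — is the technical heart of the argument and is where the bulk of Sections \ref{sec:poisson} and \ref{sec:proof} will be spent.
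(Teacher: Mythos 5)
Your overall strategy — apply Poisson summation to the $\id{a}$-sum in $\Sigma_3$, exploit Property (3) of the Hecke family for primitivity of $\chi_{\id{b}_1}\barre{\chi}_{\id{b}_2}$, rely on a cancellation between main terms, and close with a recursive $B_1$ bound via $(E_\alpha)$ and duality — is the same broad plan the paper follows. However, you have misidentified the mechanism of the crucial main-term cancellation, and this is a genuine gap rather than a harmless change of route. In the paper, $\Sigma_3$ is first written as a difference of two $\Sigma_4$-sums, one with the condition $s(\id{a})\ge 1$ and one with $s(\id{a})\le K$; Poisson is applied to the first, producing the main term $T$, while the second produces the complementary main term $T'$. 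The cancellation between $T$ and $T'$ has nothing to do with the reciprocity law \eqref{eq:Hecke:recip} "applied in reverse." Rather, it rests on the Parseval identity \eqref{eq:Parseval}, $\int W(x^2)\,dx = \int \dot W(x^2)\,dx$, which makes the transcendental factors in $T$ and $T'$ coincide, together with a purely arithmetic evaluation of the divisor sum $\Sigma_5(\chi,\id{s},\id{g})$ (Lemma \ref{lemme:main}). One then checks directly that the contributions from ideals $\id{a}$ with $\rond{N}\id{a}\le K$ cancel identically (and vanish entirely if $\id{g}=(1)$), so that only a short range of norms between $K\rond{N}(\id{g})^{-2}$ and $K\rond{N}(\id{g})^{2}$ survives; this is exactly why the bound in Lemma \ref{lemme:main} has the shape $\sqrt{M/K}\,B_1\!\big(K(\rond{N}\id{g})^2(MN)^\varepsilon,\,N(MN)^\varepsilon\big)$, and it is from \emph{this} $B_1$, via Lemma \ref{lemme:duality} and $(E_\alpha)$, that the terms $\sqrt{M}K^{\alpha-1/2}$ and $\sqrt{M/K}\,N$ arise.

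A second, smaller, issue is your proposed diagonal/off-diagonal split in $\id{b}_1,\id{b}_2$. Within $\Sigma_3$ the constraint $(\id{b}_1,\id{b}_2)=\id{g}$ holds, and after pulling out $\id{g}$ the reduced $\id{b}_1,\id{b}_2$ are coprime squarefree ideals; "diagonal" $\id{b}_1=\id{b}_2$ then degenerates to the single term $\id{b}_1=\id{b}_2=(1)$, so this is not a productive decomposition. The terms $M$ and $N$ in the Proposition are not "diagonal main" and "off-diagonal remainder": they emerge from the bookkeeping of the five error terms $E_1,\dots,E_5$ of the explicit formula \eqref{eq:explicit}, which are handled via the separation Lemma \ref{lemme:separation} combined with $(E_\alpha)$. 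This whole error-term apparatus — the dyadic decompositions, the parameters $Y_B,Z_B,L_B$ chosen so that the tails of $\dot W$ and $\ddot W$ are negligible, and the sequence-twisting tricks carrying the $\rond{N}(\id{g})^4$ loss — is absent from your proposal and is where most of the work lies. So while the high-level shape is right, the account of where the cancellation comes from and where each term of the final bound originates would need to be substantially revised to yield a correct proof.
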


\noindent
We conclude this section by showing that the proposition implies Theorem \ref{cor}.

\begin{proof}[Proof of Theorem \ref{cor}] 
Let $M$, $N\ge 1$ and $\varepsilon$ be fixed. Assume first that $N(MN)^\varepsilon \le M$ and define $K=N^2M^{-1} (MN)^\varepsilon$. Let $r\ge \varepsilon^{-1}$ be an integer and define $g=N^{1/r}$. Lemma \ref{lemme:B2-B3} and Proposition \ref{prop} allow us to define a sequence $N_i$ as follows: $N_0=N$, $N_{i+1} \le N_i g^{-1}$ and

\begin{equation}
B_2(M,N_i,K) \ll (MN)^\varepsilon \Big(B_2(M,N_{i+1},K)
	+ g^5(M+N_i+N_i^{2\alpha-1}M^{1-\alpha})\Big).
\end{equation}
After iterating, we obtain

\begin{align}
B_2(M,N,K)
&\ll (MN)^{i\varepsilon} \left(B_2(M,N_i,K) + \sum_{j=0}^{i-1} g^5(M+N_j+N_j^{2\alpha-1}M^{1-\alpha})\right)\nonumber\\
&\ll (MN)^{i\varepsilon} \left(B_2(M,N_i,K) + ig^5(M+N+N^{2\alpha-1}M^{1-\alpha})\right).\label{concl:eq:1}
\end{align}
Note that $N_r\le1\le g$, thus \eqref{concl:eq:1} with $i=r$ combined with the estimate $B_2(M,N,K)\le MN$ gives

\[
B_2(M,N,K) \ll r (MN)^{r\varepsilon} g^5(M+N+N^{2\alpha-1}M^{1-\alpha}). \qedhere
\]
\end{proof}

The remainder of the paper is devoted to the proof of Proposition \ref{prop}: in Section \ref{sec:poisson} we determine an explicit formula for $B_3$, and in Section \ref{sec:proof} we study this formula term by term to deduce Proposition \ref{prop}.

\section{An explicit formula for the norm $B_3$}\label{sec:poisson}

The aim of this section is to prove formula \eqref{eq:explicit}, by applying Poisson summation formula. In doing so, the fact that we work with quadratic characters turns out to be crucial. We shall make frequent use of certain transforms $\hat{h}$, $\dot{h}$ and $\ddot{h}$, all described in Section \ref{sec:app}.\\

We define the following quantity:
\begin{equation}\label{eq:defB4}
\Sigma_4(\id{m},?K;h,X,\chi) = \sum_{\substack{\id{a}\in I(\id{m})\\s(\id{a})?K}} h\left(\frac{\rond{N}(\id{a})}{X}\right) \chi(\id{a}),
\end{equation}
where $\chi$ is a Hecke character, $\id{m}$ is an ideal, $h$ is any function, and $?$ stands in for $\le$ or $>$. Lemma \ref{lemme:Poisson-char} immediately gives:
\begin{cor}\label{cor:Poisson-char}
For any primitive quadratic character $\chi \mod{\id{f}}$, 
\[
\Sigma_4\big((1),\ge1;h,X,\chi\big) = \frac{X}{\sqrt{\rond{N} \id{f}}} \Sigma_4\left((1),\ge1;\dot{h},\frac{\rond{N}\id{f}}{X}, \chi\right).
\]
\end{cor}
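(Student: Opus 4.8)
The plan is to unwind the definition of $\Sigma_4$ and reduce the statement directly to Lemma \ref{lemme:Poisson-char}. First I would observe that the side condition $s(\id{a})\ge 1$ in \eqref{eq:defB4} is vacuous: $s(\id{a})$ is the absolute norm of an integral ideal, hence a positive integer, so every nonzero integral ideal satisfies it. Moreover $I((1))$ is the set of all nonzero integral ideals, and $\chi \mod{\id{f}}$ vanishes on ideals not coprime to $\id{f}$. Hence
\[
\Sigma_4\big((1),\ge 1;h,X,\chi\big) = \sum_{\id{a}\neq 0} \chi(\id{a})\, h\!\left(\frac{\rond{N}\id{a}}{X}\right) = \sum_{\id{a}\in I(\id{f})} \chi(\id{a})\, h\!\left(\frac{\rond{N}\id{a}}{X}\right),
\]
which is exactly the left-hand side appearing in Lemma \ref{lemme:Poisson-char}; the hypothesis of that lemma is met since a primitive quadratic character is non-trivial.

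Next I would apply Lemma \ref{lemme:Poisson-char} to rewrite the above as
\[
\frac{\epsilon(\chi)\,X}{\sqrt{\rond{N}\id{f}}} \sum_{\id{b}\in I(\id{f})} \barre{\chi}(\id{b})\, \dot{h}\!\left(\frac{X\rond{N}\id{b}}{\rond{N}\id{f}}\right).
\]
Here is where quadraticity enters twice: since $\chi$ is quadratic one has $\barre{\chi}=\chi$, and, as recalled after \eqref{eq:Hecke:4}, $\epsilon(\chi)=1$. Writing $\frac{X\rond{N}\id{b}}{\rond{N}\id{f}} = \rond{N}\id{b}\big/\frac{\rond{N}\id{f}}{X}$ and again invoking the vacuous condition $s(\id{b})\ge 1$, the remaining sum is precisely $\Sigma_4\big((1),\ge 1;\dot{h},\rond{N}\id{f}/X,\chi\big)$, and the asserted identity follows.

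\textbf{Main obstacle.} There is essentially no difficulty: the whole content is bookkeeping. The only points deserving a word of care are that the constraint $s(\id{a})\ge 1$ carries no information, and that both the replacement of $\barre{\chi}$ by $\chi$ and the vanishing of $\epsilon(\chi)$ depend on $\chi$ being quadratic — which is exactly why the corollary is formulated only in that case.
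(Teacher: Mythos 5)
Your argument is correct and is exactly what the paper intends by ``Lemma \ref{lemme:Poisson-char} immediately gives'': one observes that for $\id{m}=(1)$ the coprimality and $s(\id{a})\ge 1$ constraints are vacuous, applies Lemma \ref{lemme:Poisson-char}, and then uses quadraticity to set $\epsilon(\chi)=1$ and $\barre{\chi}=\chi$. You have simply made explicit the bookkeeping the paper suppresses.
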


\subsection{Decomposition of $B_3$}\label{subsec:B3-B4}

Let $\lambda$ be a sequence of support $N$, and suppose $\id{g}\in I(\id{c})$ and $g\in G$.
Recall the definitions \eqref{eq:defB3} and \eqref{eq:defB4}, and let $\id{s}$ be the radical of $\id{c}$ (i.e.\ the product of all prime ideals dividing $\id{c}$). We have

\[
\begin{split}
\Sigma_3(M,N,K,\id{g},g,\lambda)
&= \sumstar_{\substack{(\id{b}_1,\id{b}_2)=\id{g} \\ [\id{b}_1]=[\id{b}_2]=g}} \lambda_{\id{b}_1} \barre{\lambda}_{\id{b}_2} \sum_{\substack{\id{a}\in I(\id{s})\\ s(\id{a})> K}} W\left(\frac{\rond{N}(\id{a})}{M}\right) \chi_{\id{b}_1}(\id{a}) \chi_{\id{b}_2}(\id{a})\\
&= \sumstar_{\substack{(\id{b}_1,\id{b}_2)=(1)\\  [\id{b}_1]=[\id{b}_2]=g[\id{g}^{-1}]}} \lambda_{\id{gb}_1} \barre{\lambda}_{\id{gb}_2}
\Sigma_4\left(\id{s g}, >K;W,M,\chi_{\id{b}_1}\chi_{\id{b}_2}\right)\\
&= \sumstar_{\substack{(\id{b}_1,\id{b}_2)=(1)\\  [\id{b}_1]=[\id{b}_2]=g[\id{g}^{-1}]}} \lambda_{\id{gb}_1} \barre{\lambda}_{\id{gb}_2}
\Sigma_4\left(\id{s g}, \ge1;W,M,\chi_{\id{b}_1}\chi_{\id{b}_2}\right) \\
& \phantom{HHHHHHHHHHHH}
- \sumstar_{\substack{(\id{b}_1,\id{b}_2)=(1)\\  [\id{b}_1]=[\id{b}_2]=g[\id{g}^{-1}]}} \lambda_{\id{gb}_1} \barre{\lambda}_{\id{gb}_2}
\Sigma_4\left(\id{s g}, \le K;W,M,\chi_{\id{b}_1}\chi_{\id{b}_2}\right) .
\end{split}
\]
Note that by Property (3) of Definition \ref{defi:HeckeFamily}, since
$[\id{b}_1]=[\id{b}_2]$ we know that the character $\chi_{\id{b}_1}\chi_{\id{b}_2}$ is primitive modulo $\id{b}_1\id{b}_2$. 
Once we remove the coprimality condition (using the M\"obius function), we are in a position to apply the Poisson summation formula (Corollary \ref{cor:Poisson-char}) to $\Sigma_4(\id{s g}, \ge 1,W,M,\chi_{\id{b}_1,\id{b}_2})$. We therefore have
\begin{equation*}
\Sigma_4(\id{s g}, \ge 1,W,M,\chi_{\id{b}_1}\chi_{\id{b}_2})
= \sum_{\id{e}\mid \id{s g}} \frac{\mu(\id{e})}{\rond{N}\id{e}} \chi_{\id{b}_1}\chi_{\id{b}_2}(\id{e}) \frac{M}{\sqrt{\rond{N}\id{b}_1\id{b}_2}} \Sigma_4\left((1),\ge 1;\dot{W},\frac{\rond{N}(\id{e}\id{b}_1\id{b}_2)}{M},\chi_{\id{b}_1}\chi_{\id{b}_2}\right),
\end{equation*}
whence
\begin{align*}
&\Sigma_3(M,N,K,\id{g},g,\lambda) = \\
& \phantom{HHH}
\sumstar_{\substack{(\id{b}_1,\id{b}_2)=(1)\\  [\id{b}_1]=[\id{b}_2]=g[\id{g}^{-1}]}} \lambda_{\id{gb}_1} \barre{\lambda}_{\id{gb}_2}\sum_{\id{e}\mid \id{s g}} \frac{\mu(\id{e})}{\rond{N}\id{e}} \chi_{\id{b}_1}\chi_{\id{b}_2}(\id{e}) \frac{M}{\sqrt{\rond{N}\id{b}_1\id{b}_2}} \Sigma_4\left((1),\ge 1;\dot{W},\frac{\rond{N}(\id{e}\id{b}_1\id{b}_2)}{M},\chi_{\id{b}_1}\chi_{\id{b}_2}\right)\\
& \phantom{HHHHH}
- \sumstar_{\substack{(\id{b}_1,\id{b}_2)=(1)\\  [\id{b}_1]=[\id{b}_2]=g[\id{g}^{-1}]}} \lambda_{\id{gb}_1} \barre{\lambda}_{\id{gb}_2} \Sigma_4(\id{s g}, \le K;W,M,\chi_{\id{b}_1}\chi_{\id{b}_2}).
\end{align*}
Using $\dot{W}(x)\ll \tq x\tq^{-A}$ for $\tq x\tq>1$, one shows that

\begin{equation}\label{eq:B3-B4}
\begin{split}
\Sigma_3(M,N,K,\id{g},g,\lambda)
&= \sumstar_{\substack{(\id{b}_1,\id{b}_2)=(1)\\  [\id{b}_1]=[\id{b}_2]=g[\id{g}^{-1}]}} \lambda_{\id{gb}_1} \barre{\lambda}_{\id{gb}_2}\sum_{\id{e}\mid \id{sg}} \frac{\mu(\id{e})}{\rond{N}\id{e}} \chi_{\id{b}_1}\chi_{\id{b}_2}(\id{e}) \frac{M}{\sqrt{\rond{N}\id{b}_1\id{b}_2}} \\
&\hspace{3cm}
\times \Sigma_4\left((1),\le K;\dot{W},\frac{\rond{N}(\id{e}\id{b}_1\id{b}_2)}{M},\chi_{\id{b}_1}\chi_{\id{b}_2}\right)\\
&- \sumstar_{\substack{(\id{b}_1,\id{b}_2)=(1)\\  [\id{b}_1]=[\id{b}_2]=g[\id{g}^{-1}]}} \lambda_{\id{gb}_1} \barre{\lambda}_{\id{gb}_2} \Sigma_4(\id{sg}, \le K;W,M,\chi_{\id{b}_1}\chi_{\id{b}_2}) + \rond{O}_\varepsilon\left(\norme \lambda\norme \right).
\end{split}
\end{equation}
Formula \eqref{eq:B3-B4} is an inexplicit version of Proposition \ref{prop}.
We now turn to the quantity ${\Sigma_4(\id{m},\le K;h,X,\chi)}$, keeping in mind our choice of parameters

\begin{equation*}
\left\{
\begin{aligned}
&\id{m}=(1), &h=\dot{W}, &\ X=\frac{\rond{N}(\id{e} \id{b}_1\id{b}_2)}{M}, &\chi= \chi_{\id{b}_1}\chi_{\id{b}_2} &\qquad (1)\\
&\id{m}=\id{sg}, &h=W, &\ X=M, &\chi=\chi_{\id{b}_1}\chi_{\id{b}_2} &\qquad (2)
\end{aligned}
\right.
\end{equation*}

\subsection{An explicit formula for $\Sigma_4$}\label{subsec:explicitformula}

Let $\chi$ be a quadratic primitive Hecke character of conductor $\id{f}$. Let $\id{m}$ be an integral ideal of $K$. Let $h$ be a smooth function with compact support and write $f(x):= h(x^2)$. Then

\begin{align*}
\Sigma_4(\id{m}, \le K; h,X,\chi)
&= \sum_{\substack{(\id{a},\id{m})=(1) \\ s(\id{a})\le K}} h\left(\frac{\rond{N}\id{a}}{X}\right) \chi(\id{a})\\
&= \sum_{\substack{\id{a} \ec{ sq-free}\\ (\id{a},\id{m})=(1)\\ s(\id{a})\le K}} \chi(\id{a}) \sum_{\substack{(\id{b},\id{m})=(1)\\ (\id{b},\id{f})=(1)}} h\left(\frac{\rond{N}\id{b}^2}{X/\rond{N} \id{a}}\right)\\
&= \sum_{\substack{\id{a} \ec{ sq-free}\\ (\id{a},\id{m})=(1)\\ s(\id{a})\le K}} \chi(\id{a}) \sum_{(\id{b},\id{m}\id{f})=(1)} f\left(\frac{\rond{N}\id{b}}{\sqrt{X/\rond{N} \id{a}}}\right).
\end{align*}
In order to apply Lemma \ref{lemme:Poisson-coprim}, we make a dyadic partition for the $\id{a}$-sum into intervals $(B,2B]$, with $B\le K$. Then, for any $B\le K$, $Y_B\le \sqrt{X/2B}$, $Z_B\ge \sqrt{X/B}$ and $L_B\ge 2Z_B^2B/X$, we have (recall the notations defined in Section \ref{sec:app}) 

\begin{equation}\label{eq:B4}
\begin{split}
\Sigma_4(\id{m}, \le K; h,X,\chi)
&=  \sum_{B\le K} \Bigg\{
\frac{\alpha_k}{A_k} \frac{\varphi(\id{mf})}{\rond{N}(\id{mf})}  \sqrt{X} \sum_{\substack{\id{a} \ec{ sq-free}\\ (\id{a},\id{m})=(1)\\ s(\id{a})\sim B}} \frac{\chi(\id{a})}{\sqrt{\rond{N}\id{a}}} \int_\R h(t^2) \,dt\\
&-\frac{\alpha_k}{A_k} \sqrt{X}\sum_{\substack{\id{a} \ec{ sq-free}\\ (\id{a},\id{m})=(1)\\ s(\id{a})\sim B}} \frac{\chi(\id{a})}{\sqrt{\rond{N} \id{a}}}\sum_{\substack{\id{d}\mid \id{mf}\\\rond{N}\id{d} >Z_B}} \frac{\mu(\id{d})}{\rond{N}\id{d}}\int_\R h(t^2) \,dt\\
&- \delta_{d=2} h(0) \alpha_kA_k \sum_{\substack{\id{a} \ec{ sq-free}\\ (\id{a},\id{m})=(1)\\ s(\id{a})\sim B}}\chi(\id{a}) \sum_{\substack{\id{d}\mid \id{mf}\\\rond{N}\id{d}\le Z_B}} \mu(\id{d})\\
&+ \sqrt{X}  \sum_{\substack{\id{a} \ec{ sq-free}\\ (\id{a},\id{m})=(1)\\ s(\id{a})\sim B}} \frac{\chi(\id{a})}{\sqrt{\rond{N}\id{a}}} \sum_{\substack{\id{d} \mid \id{mf}\\ Y_B<\rond{N} \id{d} \le Z_B}} \frac{\mu(\id{d})}{\rond{N} \id{d}} \sum_{\substack{\id{b}\neq 0\\ \rond{N}\id{b}\le L_B}} \ddot{h}\left(\frac{\rond{N}\id{b}\sqrt{X}}{\rond{N}\id{d}\sqrt{\rond{N}\id{a}}}\right)\\
& + \rond{O}\left(\sqrt{\frac{X}{B}}\left(\frac{\sqrt{X}}{\sqrt{B}Y_B}\right)^{-A}\right) +  \rond{O}\left(\sqrt{\frac{X}{B}}\left(\frac{\sqrt{B}Z_B}{\sqrt{X}}\right)^{-A}\right)\Bigg\},
\end{split}
\end{equation}
for any $A>0$, where $\delta$ is Kronecker's delta and $d=[k:\Q]$.

\subsection{An explicit formula for $\Sigma_3$}\label{subsec:explicitformula-bis}

We now obtain an explicit formula for $\Sigma_3(M,N,K,\id{g},g,\lambda)$, by plugging \eqref{eq:B4} into \eqref{eq:B3-B4}. Recall that the conductor of $\chi_{\id{b}_1}\chi_{\id{b}_2}$ is precisely $\id{b}_1 \id{b}_2$ in this case. According to the cases $(1)$ and $(2)$ described above, our choices for $Y_B$, $Z_B$ and $L_B$ are
\begin{equation*}
\left\{\begin{aligned}
& Y_{B,\id{e}}^{(1)}=\frac{N\sqrt{\rond{N}\id{e}}}{\rond{N}\id{g}\sqrt{2BM}}(MN)^{-\varepsilon_1}
&
& Z_{B,\id{e}}^{(1)}= \frac{N\sqrt{\rond{N}\id{e}}}{\rond{N}\id{g}\sqrt{BM}}(MN)^{\varepsilon_1}
&
& L^{(1)}=2 (MN)^{\varepsilon_1} &\qquad (1)
\\
& Y_B^{(2)}=\frac{\sqrt{M}}{\sqrt{2B}} (MN)^{-\varepsilon_1}
&
& Z_B^{(2)}= \frac{\sqrt{M}}{\sqrt{B}} (MN)^{\varepsilon_1}
&
& L^{(2)}=2 (MN)^{\varepsilon_1} &\qquad (2)
\end{aligned}\right.
\end{equation*}
for some $\varepsilon_1>0$ which can be conveniently chosen. We thus obtain (recall that $W(0)=0$)
\begin{multline}\label{eq:explicit}
\Sigma_3(M,N,K,\id{g},g,\lambda) = 
\sum_{B\le K} \bigg\{
T(B,\id{g},g,\lambda) -T'(B,\id{g},g,\lambda) - E_1(B,\id{g},g,\lambda) \\
- E_2(B,\id{g},g,\lambda) + E_3(B,\id{g},g,\lambda) + E_4(B,\id{g},g,\lambda) - E_5(B,\id{g},g,\lambda)\bigg\},
\end{multline}
where the terms in \eqref{eq:explicit}, all depending on $M$ and $N$, are given by
\begin{multline*}
T(B,\id{g},g,\lambda)=
\frac{\alpha_k}{A_k} \sum_{\id{e}\mid \id{sg}}\frac{\mu(\id{e})}{\sqrt{\rond{N}\id{e}}} \sum_{\substack{\id{a} \ec{ sq-free}\\ s(\id{a})\sim B}} \sqrt{\frac{M}{\rond{N}\id{a}}} \  \sumstar_{\substack{(\id{b}_1,\id{b}_2)=(1)\\  [\id{b}_1]=[\id{b}_2]=g[\id{g}^{-1}]}} \lambda_{\id{gb}_1} \barre{\lambda}_{\id{gb}_2}\frac{\varphi(\id{b}_1\id{b}_2)}{\rond{N} \id{b}_1\id{b}_2} \chi_{\id{b}_1}\chi_{\id{b}_2}(\id{ae}) \int_\R \dot{W}(x^2)\,dx,
\end{multline*}

\begin{multline*}
T'(B,\id{g},g,\lambda)=
\frac{\alpha_k}{A_k} \ \sumstar_{\substack{(\id{a},\id{g})=(1)\\ \id{a}\sim B}} \sqrt{\frac{M}{\rond{N}\id{a}}} \ \sumstar_{\substack{(\id{b}_1,\id{b}_2)=(1)\\  [\id{b}_1]=[\id{b}_2]=g[\id{g}^{-1}]}} \lambda_{\id{gb}_1} \barre{\lambda}_{\id{gb}_2}\frac{\varphi(\id{sg}\id{b}_1\id{b}_2)}{\rond{N}\id{sg}\id{b}_1\id{b}_2} \chi_{\id{b}_1}\chi_{\id{b}_2}(\id{a}) \int_\R W(x^2)\,dx,
\end{multline*}

\begin{multline*}
E_1(B,\id{g},g,\lambda)=
\frac{\alpha_k}{A_k} \sum_{\id{e}\mid \id{sg}}\frac{\mu(\id{e})}{\sqrt{\rond{N}\id{e}}} \sum_{\substack{\id{a} \ec{ sq-free}\\ s(\id{a})\sim B}} \sqrt{\frac{M}{\rond{N}\id{a}}} \sum_{\rond{N}\id{d} > Z_{B,\id{e}}^{(1)}} \frac{\mu(\id{d})}{\rond{N}\id{d}} \sumstar_{\substack{(\id{b}_1,\id{b}_2)=(1)\\ [\id{b}_1]=[\id{b}_2]=g[\id{g}^{-1}]\\ \id{b}_1\id{b}_2 \congru 0\mod{\id{d}}}} \lambda_{\id{gb}_1} \barre{\lambda}_{\id{gb}_2}\chi_{\id{b}_1}\chi_{\id{b}_2}(\id{ae})\\
\times \int_\R \dot{W}(x^2)\,dx,
\end{multline*}

\begin{equation*}
E_2(B,\id{g},g,\lambda)=\\
\delta_{d=2}\alpha_kA_k\dot{W}(0) M \sum_{\id{e}\mid \id{sg}}\frac{\mu(\id{e})}{\rond{N}\id{e}} \sum_{\substack{\id{a} \ec{ sq-free}\\ s(\id{a})\sim B}} \sum_{\rond{N}\id{d} \le Z_{B,\id{e}}^{(1)}} \mu(\id{d}) \sumstar_{\substack{(\id{b}_1,\id{b}_2)=(1)\\ [\id{b}_1]=[\id{b}_2]=g[\id{g}^{-1}]\\\id{b}_1\id{b}_2\congru 0\mod{\id{d}}}} \frac{\lambda_{\id{gb}_1} \barre{\lambda}_{\id{gb}_2}}{\sqrt{\rond{N} \id{b}_1\id{b}_2}}\chi_{\id{b}_1}\chi_{\id{b}_2}(\id{ae}),
\end{equation*}

\begin{multline*}
E_3(B,\id{g},g,\lambda)= \sum_{\id{e}\mid \id{sg}}\frac{\mu(\id{e})}{\sqrt{\rond{N}\id{e}}} \sum_{\substack{\id{a} \ec{ sq-free}\\ s(\id{a})\sim B}} \sqrt{\frac{M}{\rond{N}\id{a}}} \sum_{Y_{B,\id{e}}^{(1)}<\rond{N}\id{d}\le Z_{B,\id{e}}^{(1)}}
\frac{\mu(\id{d})}{\rond{N}\id{d}} \!\!\!\!\!\!\sumstar_{\substack{(\id{b}_1,\id{b}_2)=(1)\\ [\id{b}_1]=[\id{b}_2]=g[\id{g}^{-1}] \\ \id{b}_1\id{b}_2\congru 0\mod{\id{d}}}} \hspace{-0.5cm} \lambda_{\id{gb}_1} \barre{\lambda}_{\id{gb}_2} \chi_{\id{b}_1}\chi_{\id{b}_2}(\id{ae}) \\
\times \sum_{\substack{\id{b}\neq 0\\ \rond{N}\id{b} \le L^{(1)}}} \ddot{\dot{W}}\left(\frac{\rond{N}\id{b}\sqrt{\rond{N}\id{e}\id{b}_1\id{b}_2}}{\rond{N}\id{d}\sqrt{M\rond{N}\id{a}}}\right),
\end{multline*}

\begin{multline*}
E_4(B,\id{g},g,\lambda)=
\frac{\alpha_k}{A_k} \sumstar_{\substack{(\id{a},\id{g})=(1)\\ \id{a} \sim B}} \sqrt{\frac{M}{\rond{N} \id{a}}} \sum_{\rond{N}\id{d}>Z_B^{(2)}} \frac{\mu(\id{d})}{\rond{N}\id{d}} \sumstar_{\substack{(\id{b}_1,\id{b}_2)=(1)\\ [\id{b}_1]=[\id{b}_2]=g[\id{g}^{-1}]\\\id{sg}\id{b}_1\id{b}_2\congru 0\mod{\id{d}}}} \lambda_{\id{gb}_1} \barre{\lambda}_{\id{gb}_2} \chi_{\id{b}_1}\chi_{\id{b}_2}(\id{a}) \int_\R W(x^2)\,dx
\end{multline*}
and

\begin{multline*}
E_5(B,\id{g},g,\lambda)=\sumstar_{\substack{(\id{a},\id{g})=(1)\\ \id{a} \sim B}} \sqrt{\frac{M}{\rond{N}\id{a}}}\sum_{Y_B^{(2)}<\rond{N} \id{d} \le Z_B^{(2)}}
\frac{\mu(\id{d})}{\rond{N} \id{d}} \sumstar_{\substack{(\id{b}_1,\id{b}_2)=(1)\\ [\id{b}_1]=[\id{b}_2]=g[\id{g}^{-1}]\\ \id{sg}\id{b}_1\id{b}_2\congru 0\mod{\id{d}}}} \lambda_{\id{gb}_1} \barre{\lambda}_{\id{gb}_2}  \chi_{\id{b}_1}\chi_{\id{b}_2}(\id{a}) \\
\times \sum_{\substack{\id{b}\neq 0\\ \rond{N}\id{b} \le L^{(2)}}} \ddot{W}\left(\frac{\rond{N}\id{b}\sqrt{M}}{\rond{N}\id{d}\sqrt{\rond{N}\id{a}}}\right).
\end{multline*}

\section{Proof of Proposition \ref{prop}}\label{sec:proof}

In this section, we study in detail the terms appearing in \eqref{eq:explicit}; this is achieved in Lemma \ref{lemme:error} and Lemma \ref{lemme:main}. The proof of Proposition \ref{prop} will then easily follow.

\subsection{The error terms}\label{subsec:erro}

The following useful result is adapted from \cite[Lemma 10]{HB}.

\begin{lemme}\label{lemme:separation}
Let $M,N,D>0$. Let $\lambda$ and $\lambda'$ be two sequences of support $N$.  Let

\begin{equation*}
S = \sum_{\id{d} \sim D} \sumstar_{\id{a} \sim M} \Big\vert \sumstar_{\substack{(\id{b}_1,\id{b}_2)=(1)\\\id{b}_1\id{b}_2 \congru 0 \mod{\id{d}}}} \lambda_{\id{b}_1} \lambda_{\id{b}_2}' \chi_{\id{b}_1}\chi_{\id{b}_2}(\id{a})\Big\vert .
\end{equation*}
Then there exist $D_1$ and $D_2$ satisfying

\begin{equation*}
\frac{1}{\log (2MN)} \ll D_i\ll D \quad \ec{and}\quad \frac{D}{\log^2(2MN)} \ll D_1D_2 \ll  \frac{D}{\log^2(2MN)},
\end{equation*}
such that

\begin{equation*}
S^2 \ll (MN)^\varepsilon D_1D_2 B_1\left(M,\frac{N}{D_1}\right) B_1\left(M,\frac{N}{D_2}\right)  \norme \lambda\norme \norme \lambda'\norme,
\end{equation*}
for any $\varepsilon >0$.
\end{lemme}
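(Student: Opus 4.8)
\textbf{Proof proposal for Lemma \ref{lemme:separation}.}

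The plan is to follow \cite[Lemma 10]{HB} closely, the core idea being to separate the two variables $\id{b}_1$ and $\id{b}_2$ so that the sum over $\id{a}$ can be recognized as a bilinear form governed by $B_1$. First I would handle the constraint $\id{b}_1\id{b}_2 \equiv 0 \mod{\id{d}}$: writing $\id{d} = \id{d}_1 \id{d}_2$ where $\id{d}_1 = (\id{d}, \id{b}_1^\infty)$ collects the primes of $\id{d}$ dividing $\id{b}_1$, and using a divisor-bound argument, one reduces to counting, for each factorization $\id{d}=\id{d}_1\id{d}_2$ with $\id{d}_i \sim D_i$ and $D_1 D_2 \asymp D$, the contribution of those pairs with $\id{d}_1 \mid \id{b}_1$ and $\id{d}_2 \mid \id{b}_2$. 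Since there are only $O((\log 2MN)^2)$ dyadic ranges $(D_1, D_2)$ available and the total sum $S$ is split among them, a pigeonhole step produces a single dominant pair $(D_1, D_2)$ with the stated size constraints $D_i \gg 1/\log(2MN)$ and $D_1 D_2 \asymp D/\log^2(2MN)$, at the cost of an extra $(\log 2MN)^{O(1)} \ll (MN)^\varepsilon$ factor.

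Next I would introduce the reindexed sequences: set $\mu_{\id{b}} = \lambda_{\id{d}_1 \id{b}}$ supported on ideals of norm $\asymp N/D_1$, and $\mu'_{\id{b}} = \lambda'_{\id{d}_2 \id{b}}$ supported on norm $\asymp N/D_2$, absorbing the characters $\chi_{\id{d}_1 \id{b}}$ versus $\chi_{\id{d}_1}\chi_{\id{b}}$ (and similarly for $\id{d}_2$) via the multiplicativity property $\chi_{\id{a}}\chi_{\id{b}} = \chi_{\id{ab}}$ of the Hecke family — here I would need to be a little careful that this multiplicativity holds after passing to primitive characters, which is exactly the content of Property (3) together with the discussion in Section \ref{sec:app2}, and that the extra factor $\chi_{\id{d}_1}\chi_{\id{d}_2}(\id{a})$ is a fixed unimodular twist independent of $\id{b}_1, \id{b}_2$ and so harmless. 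After this reindexing $S$ is bounded by a sum over the fixed pair $(\id{d}_1,\id{d}_2)$, and over $\id{a}\sim M$, of $\bigl|\sum_{\id{b}_1}' \mu_{\id{b}_1}\chi_{\id{b}_1}(\id{a})\bigr|\cdot\bigl|\sum_{\id{b}_2}' \mu'_{\id{b}_2}\chi_{\id{b}_2}(\id{a})\bigr|$; applying Cauchy--Schwarz in $\id{a}$ and then the definition of $B_1$ to each factor yields
\[
S^2 \ll (MN)^\varepsilon\, D_1 D_2\, B_1\!\left(M,\frac{N}{D_1}\right) B_1\!\left(M,\frac{N}{D_2}\right) \norme\lambda\norme^2 \norme\lambda'\norme^2,
\]
where the factor $D_1 D_2$ records that each $\id{d}$-range contributes and the $\ell^2$-norms of $\mu, \mu'$ are bounded by those of $\lambda, \lambda'$ up to divisor-function losses absorbed into $(MN)^\varepsilon$. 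Taking square roots gives the stated bound (with $\norme\lambda\norme\norme\lambda'\norme$ on the right).

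The main obstacle I anticipate is the bookkeeping around the coprimality condition $(\id{b}_1,\id{b}_2)=(1)$ interacting with the divisibility $\id{d}\mid \id{b}_1\id{b}_2$: because $\id{b}_1$ and $\id{b}_2$ are coprime, each prime power of $\id{d}$ must lie entirely in one of them, which is what makes the clean factorization $\id{d}=\id{d}_1\id{d}_2$ with $\id{d}_1\mid\id{b}_1$, $\id{d}_2\mid\id{b}_2$ possible — but one must verify that this assignment is unique given $\id{b}_1, \id{b}_2$ and count the $\id{d}$ with a given $(\id{d}_1,\id{d}_2)$ correctly, and that the squarefree restriction on $\id{b}_1,\id{b}_2$ propagates to $\mu,\mu'$. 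A secondary technical point is ensuring the twist identities for the $\chi_{\id{b}}$ survive the passage to primitive characters; this is handled exactly as in the explicit computations of Section \ref{sec:app2}. None of these steps is deep — the whole argument is a careful transcription of \cite[Lemma 10]{HB} to the ideal-theoretic setting, with the Hecke-family axioms standing in for the elementary properties of the Jacobi symbol.
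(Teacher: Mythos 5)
Your overall architecture is right and matches the paper's stated intent: the paper gives no proof, only the remark that the lemma is ``adapted from Lemma 10 of Heath-Brown,'' and a careful transcription of that argument (unique factorization $\id{d}=\id{d}_1\id{d}_2$ forced by the squarefree/coprime structure, dyadic split with $O(\log^2)$ boxes, pigeonhole, reindexing, Cauchy--Schwarz in $\id{a}$, and two uses of the definition of $B_1$) is exactly what is needed. Two points in your sketch, however, deserve more than a gesture, and one of them as you have stated it is actually incorrect.

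First, the factorization of the character. You appeal to the identity $\chi_{\id{a}}\chi_{\id{b}}=\chi_{\id{ab}}$ and attribute it to ``Property (3) together with the discussion in Section \ref{sec:app2}.'' That identity is \emph{not} an axiom of an $n$-th order Hecke family (Definition~\ref{defi:HeckeFamily} contains no such clause), and Property~(3) says only that $\chi_{\id{a}}\barre{\chi}_{\id{b}}$ is primitive modulo $\id{ab}$ when $[\id{a}]=[\id{b}]$; the multiplicativity $\chi_{\id{a}}\chi_{\id{b}}=\chi_{\id{ab}}$ is established in Section~\ref{sec:app2} only for the particular Fisher--Friedberg construction, and the lemma is stated for an arbitrary quadratic Hecke family. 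The correct tool is Property~(2), the reciprocity law, applied twice together with multiplicativity of each $\chi_{\id{a}}$ in its ideal argument: with $\id{b}_1=\id{d}_1\id{c}_1$ one has
\[
\chi_{\id{d}_1\id{c}_1}(\id{a})
= \chi_{\id{a}}(\id{d}_1)\chi_{\id{a}}(\id{c}_1)\,\barre{C}\big([\id{d}_1\id{c}_1],[\id{a}]\big)
= \chi_{\id{d}_1}(\id{a})\,\chi_{\id{c}_1}(\id{a})\cdot
C\big([\id{a}],[\id{d}_1]\big)\,C\big([\id{a}],[\id{c}_1]\big)\,\barre{C}\big([\id{d}_1][\id{c}_1],[\id{a}]\big),
\]
and the product of $C$-factors depends only on the classes $[\id{a}]$, $[\id{d}_1]$, $[\id{c}_1]$ in the finite group $G$. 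Fixing $[\id{d}_1]$ and pigeonholing over the finitely many pairs of classes $([\id{a}],[\id{c}_1])$ (at a cost of $|G|^{O(1)}$, absorbed into the implicit constant) turns the phase into a fixed unimodular twist of the sequence $\mu_{\id{c}_1}$, exactly as you intend. In short, you have the right picture but cite the wrong justification, and the citation you give would restrict the lemma to the Fisher--Friedberg family.

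Second, the residual coprimality. After extracting $\id{d}_i=(\id{d},\id{b}_i)$ and writing $\id{b}_i=\id{d}_i\id{c}_i$, the constraint $(\id{b}_1,\id{b}_2)=(1)$ still imposes $(\id{c}_1,\id{c}_2)=(1)$ (together with $(\id{c}_1,\id{d}_2)=(\id{c}_2,\id{d}_1)=(1)$, which are one-sided and harmless). The inner sum therefore does \emph{not} yet factor as a product over $\id{c}_1$ and $\id{c}_2$, and your Cauchy--Schwarz step is premature. You need to detect $(\id{c}_1,\id{c}_2)=(1)$ by M\"obius, write $\id{c}_i=\id{e}\id{k}_i$, absorb $\id{e}$ into both sequences, and pigeonhole over the dyadic size of $\rond{N}\id{e}$ as well; this is precisely how Heath-Brown's Lemma~10 proceeds, and it is where the otherwise puzzling normalization $D_1D_2\asymp D/\log^2(2MN)$ in the statement comes from. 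Your ``obstacles'' paragraph mentions only the uniqueness of the splitting $\id{d}=\id{d}_1\id{d}_2$, which is the easy part; the genuine bookkeeping is this extra decoupling variable, and without it the argument as written does not close.
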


\begin{lemme}\label{lemme:error}
Let $B\le K$. Let $\varepsilon>0$.\\

$(i)$There exist $D_1,D_2\gg (MN)^{-\varepsilon}$ satisfying $D_1D_2 \gg N/ \rond{N}\id{g}\sqrt{BM}$ such that

\begin{equation*}
E_1(B,g,\id{g},\lambda)
\ll_{k,\id{c},W} (BN\rond{N}\id{g})^\varepsilon \sqrt{\frac{M}{B D_1D_2}} B_1\left(B,\frac{N}{D_1\rond{N}\id{g}}\right)^{1/2} B_1\left(B,\frac{N}{D_2\rond{N}\id{g}}\right)^{1/2} \norme \lambda\norme^2.
\end{equation*}

$(ii)$There exist $D_1,D_2\gg (MN)^{-\varepsilon}$ satisfying $D_1D_2 \ll (MN)^\varepsilon N/ \sqrt{MB}$ such that

\begin{align*}
\left.\begin{aligned}
& E_2(B,g,\id{g},\lambda)\\
& E_3(B,g,\id{g},\lambda)
\end{aligned}\right\}
&\ll_{k,\id{c},W} (BN\rond{N}\id{g})^\varepsilon \frac{M}{N} \sqrt{D_1D_2} B_1\left(B,\frac{N}{D_1\rond{N}\id{g}}\right)^{1/2} B_1\left(B,\frac{N}{D_2\rond{N}\id{g}}\right)^{1/2} \norme \lambda\norme^2 .
\end{align*}

$(iii)$There exist $D_1,D_2\gg (MN)^{-\varepsilon}$ satisfying $D_1D_2 \gg\sqrt{M/B}$ such that

\begin{equation*}
\left.\begin{aligned}
& E_4(B,g,\id{g},\lambda)\\
& E_5(B,g,\id{g},\lambda)
\end{aligned}\right\}
\ll_{k,\id{c},W} (BN\rond{N}\id{g})^\varepsilon \sqrt{\frac{M}{B D_1D_2}} B_1\left(B,\frac{N}{D_1\rond{N}\id{g}}\right)^{1/2} B_1\left(B,\frac{N}{D_2\rond{N}\id{g}}\right)^{1/2} \norme \lambda\norme^2.
\end{equation*}
\end{lemme}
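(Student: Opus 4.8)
The plan is to estimate each of the five error terms $E_i(B,\id{g},g,\lambda)$ by reducing it, after trivially bounding the outer auxiliary sums (over $\id{e}\mid\id{sg}$, over the squarefree $\id{a}$ with $s(\id{a})\sim B$, over the short $\id{b}$-sum of length $L^{(i)}=2(MN)^{\varepsilon_1}$, and over the divisors $\id{d}$ weighted by $\mu(\id{d})/\rond{N}\id{d}$), to a sum of the shape treated in Lemma \ref{lemme:separation}. The key point is that in each $E_i$ the innermost double sum over $\id{b}_1,\id{b}_2$ with $(\id{b}_1,\id{b}_2)=(1)$, fixed classes $[\id{b}_1]=[\id{b}_2]=g[\id{g}^{-1}]$, and the congruence $\id{b}_1\id{b}_2\congru 0\mod{\id{d}}$ (equivalently $\id{sg}\id{b}_1\id{b}_2\congru 0\mod{\id{d}}$ in cases $(4),(5)$), together with the twist $\chi_{\id{b}_1}\chi_{\id{b}_2}(\id{ae})$, is exactly $\Sigma$ of the type whose square root is controlled by $B_1(M',N/D_1)^{1/2}B_1(M',N/D_2)^{1/2}$ via Cauchy--Schwarz on the $\id{a}$-variable. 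Since all the $\id{a}$'s occurring have norm $\rond{N}\id{a}\sim B$ (because $\id{a}$ is squarefree with $s(\id{a})\sim B$, coprime to $\id{sg}$, so $\rond{N}\id{a}\asymp B$), the relevant $B_1$ is $B_1(B,\cdot)$, which is why $B$ and not $M$ appears in the conclusion.

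Concretely, I would proceed term by term. First, pull the absolute value inside: $|E_i|\le (\text{explicit power of }\rond{N}\id{g},\ \text{and }(MN)^{O(\varepsilon_1)})\cdot\sqrt{M}\cdot\max_{\id{e},\id{b}\text{-shift}}\ \sum_{\id{d}}\frac{1}{\rond{N}\id{d}}\sum_{\id{a}\sim B}\frac{1}{\sqrt{\rond{N}\id{a}}}\big|\sumstar_{\ldots}\lambda_{\id{gb}_1}\barre\lambda_{\id{gb}_2}\chi_{\id{b}_1}\chi_{\id{b}_2}(\id{ae})\big|$, absorbing the transforms $\dot W$, $\ddot{\dot W}$, $\ddot W$, and $\dot W(0)$ into the implied constant (they are bounded, and for $E_2$ one also uses $\dot W(0)\ll 1$). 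For the divisor sum, split into the dyadic ranges imposed by the thresholds $Y^{(j)}_{B,\id{e}}<\rond{N}\id{d}\le Z^{(j)}_{B,\id{e}}$ (for $E_3,E_5$) or $\rond{N}\id{d}>Z^{(j)}_{B,\id{e}}$ (for $E_1,E_4$) or $\rond{N}\id{d}\le Z^{(j)}_{B,\id{e}}$ (for $E_2$), picking a dyadic parameter $D$ in the relevant range; the weight $\rond{N}\id{d}^{-1}$ together with $D\sim\rond{N}\id{d}$ converts $\sum_{\id{d}\sim D}$ into $D^{-1}\sum_{\id{d}\sim D}$. Then apply Lemma \ref{lemme:separation} with $M\to B$, $\lambda\to(\lambda_{\id{gb}})$ restricted to the class $g[\id{g}^{-1}]$ (a sequence of support $N/\rond{N}\id{g}$, of norm $\le\norme\lambda\norme$), and the congruence modulus $\id{d}$: this yields $D_1,D_2$ with $D_1D_2\asymp D/\log^2(2BN)$ and $S\ll (BN)^{\varepsilon}(D_1D_2)^{1/2}B_1(B,\tfrac{N}{D_1\rond{N}\id{g}})^{1/2}B_1(B,\tfrac{N}{D_2\rond{N}\id{g}})^{1/2}\norme\lambda\norme^2$. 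Feeding the appropriate value/range of $D$ — $D\asymp D_1D_2\log^2\asymp \sqrt{X/B}\cdot$(relevant conductor-normalization), which in case $(1)$ gives $D_1D_2\gg N/(\rond{N}\id{g}\sqrt{BM})$, in case $(2)$ gives $D_1D_2\ll(MN)^\varepsilon N/\sqrt{MB}$ via the upper threshold, and in the geometric/short-divisor cases gives $D_1D_2\gg\sqrt{M/B}$ — and carrying the factor $\sqrt{M}$ from the $\sqrt{M/\rond{N}\id{a}}\asymp\sqrt{M/B}$ through, produces precisely the claimed bounds $(i)$, $(ii)$, $(iii)$. For $E_2$ there is no $\sqrt{M/\rond{N}\id{a}}$ but instead a factor $\delta_{d=2}\alpha_k A_k\dot W(0)M$ and a $(\rond{N}\id{b}_1\id{b}_2)^{-1/2}\asymp N^{-1}\rond{N}\id{g}$; tracking these gives the $M/N$ in $(ii)$, matching $E_3$ after one checks the arithmetic of the thresholds.

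The main obstacle, and the step warranting genuine care rather than bookkeeping, is verifying that the dyadic divisor parameter $D$ in each case can indeed be chosen in the prescribed range and that, after applying Lemma \ref{lemme:separation}, the resulting constraint on $D_1D_2$ is the one stated — in particular reconciling the two slightly different normalizations in cases $(1)$ and $(2)$ (the conductor is $\rond{N}\id{b}_1\id{b}_2\asymp (N/\rond{N}\id{g})^2$ in both, but in $(1)$ the argument $X=\rond{N}(\id{e}\id{b}_1\id{b}_2)/M$ brings in extra $M$-dependence that shifts where the "dual" divisor range sits) — and confirming the tail ranges $\rond{N}\id{d}>Z_{B,\id{e}}^{(1)}$ and $\rond{N}\id{d}>Z_B^{(2)}$ genuinely truncate so that only $O_\varepsilon$-many relevant dyadic blocks contribute after using $\mu(\id{d})$ to cut $\id{d}\mid\id{mf}$ down to size. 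One must also check that replacing the congruence $\id{sg}\id{b}_1\id{b}_2\congru0\mod\id{d}$ (cases $(4),(5)$) by $\id{b}_1\id{b}_2\congru0\mod{\id{d}/(\id{d},\id{sg})}$ loses at most a factor $\tau(\id{sg})$ and adjusts $\id{d}$ by a bounded amount, which is harmless since $\id{s},\id{g}$ contribute only a power of $\rond{N}\id{g}$ and a $k,\id{c}$-constant. Once these range checks are in place, each $E_i$ reduces mechanically to Lemma \ref{lemme:separation} and the stated estimates follow.
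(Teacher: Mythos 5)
Your proposal matches the paper's proof: the paper establishes the lemma by treating $E_2$ and $E_4$ as representatives, in each case absorbing the bounded outer sums (over $\id{e}\mid\id{sg}$, the part $\id{a}_1\mid\id{s}$ of $\id{a}$, the transform weights $\dot W$, $\ddot{\dot W}$, $\ddot W$, $\dot W(0)$), defining a twisted sequence of support $N/\rond{N}\id{g}$ — e.g.\ $\lambda^{(g,\id{g},\id{e},\id{a}_1)}_\id{b}=\lambda_{\id{g}\id{b}}(\rond{N}\id{b})^{-1/2}\chi_\id{b}(\id{e}\id{a}_1)$ for $E_2$, $\lambda^{(g,\id{g})}_\id{b}=\lambda_{\id{g}\id{b}}$ for $E_4$ — performing a dyadic split of the $\id{d}$-sum with $D$ in the range dictated by $Y^{(j)}_{B,\id{e}}$, $Z^{(j)}_{B,\id{e}}$, and then invoking Lemma \ref{lemme:separation} exactly as you do, with the $D_1D_2$ constraints coming from the location of the dyadic parameter $D$. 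Your bookkeeping of the $(\rond{N}\id{b}_1\id{b}_2)^{-1/2}\asymp\rond{N}\id{g}/N$ factor for $E_2$ (producing the $M/N$ in part $(ii)$), the absorption of $\id{sg}$ into a divisor count, and the observation that $\rond{N}\id{a}\asymp B$ are precisely the steps in the paper.
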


\begin{proof}
Since we proceed in the same way for each error term, we only give the details for $E_2(B,g,\id{g},\lambda)$ and $E_4(B,g,\id{g},\lambda)$. Let us start with $E_2(B,g,\id{g},\lambda)$. Up to a constant depending on $k$ and $W$, we have

\begin{equation*}
E_2\ll M \sum_{\substack{\id{a}\ec{ sq-free}\\ s(\id{a})\sim B}} \sum_{\id{e}\mid \id{sg}} \sum_{\rond{N} \id{d} \le Z_{B,\id{e}}^{(1)}} \Big\vert \sumstar_{\substack{(\id{b}_1,\id{b}_2)=(1)\\ [\id{b}_1]=[\id{b}_2]=g[\id{g}^{-1}]\\ \id{b}_1\id{b}_2\congru 0\mod{\id{d}}}} \frac{\lambda_{\id{gb}_1} \barre{\lambda}_{\id{gb}_2}}{\sqrt{\rond{N} \id{b}_1\id{b}_2}} \chi_{\id{b}_1}\chi_{\id{b}_2}(\id{ae})\Big\vert
\end{equation*}
Decompose $\id{a}=\id{a}_1\id{a}_2$ with $\id{a}_1\mid \id{s}$ and $\id{a}_2$ coprime to $\id{s}$. The number of such $\id{a}_1$ depends only on $\id{c}$. The number of terms in the $\id{e}$-sum is $\rond{O}(\rond{N} \id{sg})^\varepsilon$. Define a new sequence $\lambda^{(g,\id{g},\id{e},\id{a}_1)}$ of support $N/\rond{N}\id{g}$ by

\begin{equation*}
\lambda^{(g,\id{g},\id{e},\id{a}_1)}_\id{b}=\begin{cases}
\lambda_\id{gb} (\rond{N}\id{b})^{-1/2}\chi_\id{b}(\id{ea}_1)&\ec{if } [\id{b}]=g[\id{g}^{-1}]\\
0&\ec{otherwise.}
\end{cases}
\end{equation*}
We make a dyadic partition of the $\id{d}$-sum into intervals $\id{d}\sim D$, with $D\le Z_{B,\id{e}}^{(1)}$. Then, for some $\id{e}$ and some $\id{a}_1$,

\begin{equation*}
E_2\ll_{k,\id{c},W} (\rond{N} \id{g})^\varepsilon M \sum_{D\le Z_{B,\id{e}}^{(1)}} \sumstar_{\id{a}\sim B} \sum_{\id{d}\sim D} \Big\vert \sumstar_{\substack{(\id{b}_1,\id{b}_2)=(1)\\\id{b}_1\id{b}_2 \congru 0\mod{\id{d}}}} \lambda^{(g,\id{g},\id{e},\id{a}_1)}_{\id{b}_1} \barre{\lambda}^{(g,\id{g},\id{e},\id{a}_1)}_{\id{b}_2} \chi_{\id{b}_1}\chi_{\id{b}_2}(\id{a})\Big\vert.
\end{equation*}
The number of possible $D$'s is $\rond{O}(\log N)$. Thus, for some $D$, Lemma \ref{lemme:separation} implies the existence of $D_1$ and $D_2$ such that

\begin{equation*}
\frac{1}{\log (BN/\rond{N}\id{g})} \le D_i \le N^2 \quad \ec{and} \quad \frac{D}{\log (BN/\rond{N}\id{g})}\ll D_1D_2\ll \frac{D}{\log (BN/\rond{N}\id{g})}
\end{equation*}
and

\begin{equation*}
E_2 \ll (N\rond{N}\id{g})^\varepsilon M \sqrt{D_1D_2} B_1\left(B,\frac{N}{D_1\rond{N}\id{g}}\right)^{1/2} B_1\left(B,\frac{N}{D_2\rond{N}\id{g}}\right)^{1/2}  \norme \lambda^{(g,\id{g},\id{e},\id{a}_1)}\norme^2.
\end{equation*}
We conclude by observing that $\norme \lambda^{(g,\id{g},\id{e},\id{a}_1)}\norme
\le \norme \lambda \norme \rond{N}\id{g}/N$.
Let us now consider $E_4(B,g,\id{g},\lambda)$. Up to a constant depending on $k$, $\id{c}$, and $W$, we have

\begin{align*}
E_4
&\ll \sqrt{\frac{M}{B}} \sumstar_{\substack{(\id{a},\id{g})=(1)\\ \id{a}\sim B}} \sum_{\rond{N} \id{d} > Z_B^{(2)}} \frac{1}{\rond{N}\id{d}} \Big\vert \sumstar_{\substack{(\id{b}_1,\id{b}_2)=(1)\\ [\id{b}_1]=[\id{b}_2]=g[\id{g}^{-1}]\\ \id{sgb}_1\id{b}_2\congru 0\mod{\id{d}}}}\lambda_{\id{gb}_1} \barre{\lambda}_{\id{gb}_2} \chi_{\id{b}_1}\chi_{\id{b}_2} (\id{a})\Big\vert\\
&\le \sqrt{\frac{M}{B}} \sumstar_{\id{a}\sim B} \sum_{\id{d}_1 \mid \id{sg}} \sum_{\rond{N} \id{d}_2 > Z_B^{(2)}/\rond{N}\id{sg}} \frac{1}{\rond{N}\id{d}_1\id{d}_2} \Big\vert \sumstar_{\substack{(\id{b}_1,\id{b}_2)=(1)\\ [\id{b}_1]=[\id{b}_2]=g[\id{g}^{-1}]\\ \id{b}_1\id{b}_2\congru 0\mod{\id{d}_2}}}\lambda_{\id{gb}_1} \barre{\lambda}_{\id{gb}_2} \chi_{\id{b}_1}\chi_{\id{b}_2} (\id{a})\Big\vert.
\end{align*}
The sum over $\id{d}_1$ is $\rond{O}(\log \rond{N}\id{g})$. We partition the $\id{d}_2$-sum into dyadic intervals, with $\id{d}_2\sim D$ for all $D$ (powers of $2$) lying in the interval $Z_B^{(2)}/\rond{N}\id{sg} < D \le N^2$. We define a new sequence of support $N/\rond{N}\id{g}$

\begin{equation*}
\lambda^{(g,\id{g})}_\id{b}= \begin{cases}
\lambda_\id{gb}&\ec{if } [\id{b}]=g[\id{g}^{-1}]\\
0&\ec{otherwise.}
\end{cases}
\end{equation*}
Note that there are at most $\rond{O}(\log N)$ possible $D$'s. Thus, we have, for some $D$, that

\begin{equation*}
E_4 \ll (N\rond{N}\id{g})^\varepsilon \sqrt{\frac{M}{B}} \frac{1}{D} \sumstar_{\id{a}\sim B} \sum_{\rond{N} \id{d} \sim D}  \Big\vert \sumstar_{\substack{(\id{b}_1,\id{b}_2)=(1)\\ \id{b}_1\id{b}_2\congru 0\mod{\id{d}_2}}}\lambda^{(g,\id{g})}_{\id{b}_1} \barre{\lambda}^{(g,\id{g})}_{\id{b}_2} \chi_{\id{b}_1}\chi_{\id{b}_2} (\id{a})\Big\vert.
\end{equation*}
We conclude by using Lemma \ref{lemme:separation} and the fact that $\norme \lambda^{(g,\id{g})} \norme \le \norme \lambda \norme$.
\end{proof}

\subsection{The main contribution}\label{subsec:main}

First of all, note that if we deal with the terms $T(B,\id{g},\lambda)$ and $T'(B,\id{g},\lambda)$ separately, the technique used previously for the error term does not allow us to conclude, since we would obtain for each of the quantities $T(B,\id{g},g,\lambda)$ and $T'(B,\id{g},g,\lambda)$ the upper bound

\begin{equation*}
\max_{B\le K} \sqrt{\frac{M}{B}} B_1(B,N).
\end{equation*}
If $\id{g}=(1)$, one has equality between the two main terms, as shall be seen below.

\begin{lemme}\label{lemme:main}
Let $M$, $N$, $K$, $\id{g}$ and $g$ be as above. Assume that $\rond{N}\id{g}\ll N$ and $K\le M$. Then, for any $\varepsilon>0$,

\begin{equation*}
\sum_{B\le K} T(B,\id{g},g,\lambda) - T'(B,\id{g},g,\lambda) \ll_{k,\id{c},W}  (MN)^\varepsilon \sqrt{\frac{M}{K}} B_1\Big(K(\rond{N}\id{g})^2(MN)^\varepsilon, N(MN)^\varepsilon\Big) \norme \lambda\norme^2.
\end{equation*}
\end{lemme}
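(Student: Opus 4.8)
The plan is to use the Parseval identity \eqref{eq:Parseval} together with the Euler product structure at the primes dividing $\id{c}$ and $\id{g}$ to exhibit an almost perfect cancellation between $T$ and $T'$, and then to control the small surviving remainder directly by the definition \eqref{eq:defB1} of $B_1$.

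First I would record two preliminaries. Since $[\id{b}_1]=[\id{b}_2]$, Property (3) of Definition~\ref{defi:HeckeFamily} shows that $\chi_{\id{b}_1}\chi_{\id{b}_2}$ is the \emph{primitive} quadratic character of conductor $\id{b}_1\id{b}_2$; write $\psi=\psi_{\id{b}_1,\id{b}_2}$ for it. Being quadratic, $\psi=\barre{\psi}$, $\psi^2$ is principal modulo $\id{b}_1\id{b}_2$, and $\psi$ is completely multiplicative on $I(\id{b}_1\id{b}_2)$; in particular $\psi(\id{p})^2=1$ for every prime $\id{p}\mid\id{c}\id{g}$ (such $\id{p}$ being coprime to $\id{b}_1\id{b}_2$), and $\psi(\id{a})=\chi_{\id{b}_1}(\id{a})\barre{\chi_{\id{b}_2}(\id{a})}$ for every $\id{a}$ coprime to $\id{c}$. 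Second, \eqref{eq:Parseval} gives $\int_\R\dot{W}(x^2)\,dx=\int_\R W(x^2)\,dx$, so $T(B,\id{g},g,\lambda)$ and $T'(B,\id{g},g,\lambda)$ carry the same constant prefactor $c=\tfrac{\alpha_k}{A_k}\int_\R W(x^2)\,dx$.

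Next I would sum over the dyadic parameters $B\le K$. This collapses the restrictions $s(\id{a})\sim B$ (resp.\ $\id{a}\sim B$) into $s(\id{a})\le K'$ (resp.\ $\rond{N}\id{a}\le K'$) for some $K'\asymp K$, and, letting $\id{s}$ denote the radical of $\id{c}$ and using complete multiplicativity of $\psi$ to factor the $\id{e}$-sum and the $\id{a}$-sum over the primes of $\id{c}$, one obtains closed expressions for $\sum_{B\le K}T$ and $\sum_{B\le K}T'$. The heart of the argument is then that the $\id{e}$-sum equals $\prod_{\id{p}\mid\id{sg}}\bigl(1-\psi(\id{p})/\sqrt{\rond{N}\id{p}}\bigr)$, that the $\id{a}$-sum in $T$ factors through the primes of $\id{s}$, and that, because $\psi(\id{p})^2=1$ for $\id{p}\mid\id{sg}$, every local factor at a prime of $\id{sg}$ exactly matches the ratio $\varphi(\id{sg}\id{b}_1\id{b}_2)/\varphi(\id{b}_1\id{b}_2)$ occurring in $T'$. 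Writing $\id{g}_0$ for the radical of the part of $\id{g}$ coprime to $\id{c}$, this identifies $\sum_{B\le K}\bigl(T-T'\bigr)$ with
\[
c\sqrt{M}\sumstar_{\substack{(\id{b}_1,\id{b}_2)=(1)\\ [\id{b}_1]=[\id{b}_2]=g[\id{g}^{-1}]}}\lambda_{\id{gb}_1}\barre{\lambda}_{\id{gb}_2}\frac{\varphi(\id{s})\varphi(\id{b}_1\id{b}_2)}{\rond{N}\id{s}\,\rond{N}\id{b}_1\id{b}_2}\ \sum_{\id{e},\id{d}\mid\id{g}_0}\frac{\mu(\id{e})\psi(\id{ed})}{\sqrt{\rond{N}\id{e}\,\rond{N}\id{d}}}\sum_{\substack{(\id{a},\id{sg})=(1)\\ K'/\rond{N}\id{d}<\rond{N}\id{a}\le K'}}\frac{\psi(\id{a})}{\sqrt{\rond{N}\id{a}}}.
\]
The decisive point — and the one place the quadratic hypothesis is genuinely used — is that for $\id{g}=\id{c}=(1)$ this difference vanishes identically, while in general the surviving $\id{a}$-sum runs only over $\rond{N}\id{a}>K'/\rond{N}\id{d}\ge K'/\rond{N}\id{g}$, i.e.\ over $\id{a}$ with $\rond{N}\id{a}\asymp K$.

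Finally I would estimate this remainder. On its support $\psi(\id{a})/\sqrt{\rond{N}\id{a}}\ll\sqrt{\rond{N}\id{g}/K}$; since $\psi(\id{ed})=\chi_{\id{b}_1}(\id{ed})\chi_{\id{b}_2}(\id{ed})$ and $\varphi(\id{b}_1\id{b}_2)/\rond{N}\id{b}_1\id{b}_2$ both split over $\id{b}_1,\id{b}_2$, all $\id{b}_i$-local factors may be absorbed into a modified sequence $\mu^{(\id{ed})}$ of support $N/\rond{N}\id{g}\le N$ with $\norme{\mu^{(\id{ed})}}\norme\le\norme{\lambda}\norme$. Writing $\psi(\id{a})=\chi_{\id{b}_1}(\id{a})\barre{\chi_{\id{b}_2}(\id{a})}$ and detecting $(\id{b}_1,\id{b}_2)=(1)$ by M\"obius, the inner double sum becomes $\sum_{\id{n}}\mu(\id{n})\sum_{\rond{N}\id{a}\le K'}\bigl|\sumstar_{\id{b}\congru 0\,(\id{n})}\mu^{(\id{ed})}_{\id{gb}}\chi_{\id{b}}(\id{a})\bigr|^2$; a dyadic splitting of the $\id{a}$-sum, the definition \eqref{eq:defB1} of $B_1$ applied to each block (with the sequence $\id{b}\mapsto\mu^{(\id{ed})}_{\id{gb}}\mathbf 1_{\id{n}\mid\id{b}}$), Lemma~\ref{lemme:increasing} to recombine the blocks, the monotonicity of $B_1$ in its second argument, and $\sum_{\id{n}}|\mu(\id{n})|\norme{\mu^{(\id{ed},\id{n})}}\norme^2=\sum_{\id{b}}\tau(\id{b})|\lambda_{\id{gb}}|^2\ll N^\varepsilon\norme{\lambda}\norme^2$ together bound it by $(MN)^\varepsilon B_1\bigl(K(MN)^\varepsilon,N\bigr)\norme{\lambda}\norme^2$. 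Summing over the $\ll\tau(\id{g}_0)^2\ll N^\varepsilon$ pairs $(\id{e},\id{d})$ (here $\rond{N}\id{g}\le N$), and noting that the factors $\rond{N}\id{d}^{-1/2}$ and $(\rond{N}\id{d}/K)^{1/2}$ multiply to $K^{-1/2}$, yields the saving $\sqrt{M/K}$ and hence the claimed bound, once $B_1(K(MN)^\varepsilon,N)$ is enlarged to $B_1\bigl(K(\rond{N}\id{g})^2(MN)^\varepsilon,N(MN)^\varepsilon\bigr)$. I expect the main obstacle to be the bookkeeping in the cancellation step: handling the auxiliary variables $\id{e}$, the $\id{s}$-part of $\id{a}$, and the dyadic endpoints at once, and checking that every Euler factor at a prime of $\id{sg}$ really cancels; after the difference has been put in the displayed form supported on $\rond{N}\id{a}\asymp K$, the rest is a routine large sieve estimate.
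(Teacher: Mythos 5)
Your argument is essentially the paper's: you match the $T$ and $T'$ prefactors via \eqref{eq:Parseval}, exhibit cancellation of the Euler factors at primes of $\id{s}\id{g}$ so that only the range $\rond{N}\id{a}\gtrsim K/\rond{N}\id{g}$ survives (the paper packages this bookkeeping in the auxiliary sum $\Sigma_5$), and then bound the surviving remainder — of size $\sqrt{M/K}$ times a bilinear character sum over $\rond{N}\id{a}\asymp K$ — by $B_1$. The only cosmetic difference is in the final estimate: you use the quadratic self-duality $\chi_{\id{b}_2}=\barre\chi_{\id{b}_2}$ together with M\"obius to form a perfect square and invoke the definition of $B_1$ directly, whereas the paper cites Lemma~\ref{lemme:separation} (which packages that same Cauchy--Schwarz/M\"obius step and is reused for the error terms $E_1,\dots,E_5$), so the two are interchangeable here.
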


\begin{proof}
Recall that since $\lambda$ is supported by squarefree ideals, one has $(\id{g},\id{b}_1\id{b}_2)=(1)$ in the definition of $T$ and $T'$. By definition of $s(\id{a})$ (given directly below (\ref{eq:defB2})),
$T(B,\id{g},g,\lambda)$ can be written as

\begin{equation*}
\frac{\alpha_k}{A_k} \sumstar_{\rond{N}\id{a}\sim B} \sqrt{\frac{M}{\rond{N}\id{a}}} \ \sumstar_{\substack{(\id{b}_1,\id{b}_2)=(1)\\ [\id{b}_1]=[\id{b}_2]=g[\id{g}^{-1}]}} \lambda_{\id{gb}_1} \barre{\lambda}_{\id{gb}_2}\frac{\varphi(\id{b}_1\id{b}_2)}{\rond{N}\id{b}_1\id{b}_2} \chi_{\id{b}_1}\chi_{\id{b}_2}(\id{a}) \Sigma_5(\chi_{\id{b}_1}\chi_{\id{b}_2},\id{s},\id{g}) \int_0^\infty \dot{W}(x^2)\,dx,
\end{equation*}
where, for a primitive Hecke character $\chi\mod{\id{f}}$ and two ideals $\id{a}$ and $\id{b}$, we set

\begin{equation*}
\Sigma_5(\chi,\id{a},\id{b}) =\sum_{\id{d} \mid \id{a}} \sum_{\id{e}\mid \id{ab}} \frac{\mu(\id{e})}{\sqrt{\rond{N}(\id{de})}} \chi(\id{de}).
\end{equation*}
One easily checks that for $\id{a}$ squarefree, one has

\begin{equation}\label{poisson:eq:main1}
\Sigma_5(\chi,\id{a},(1))=\frac{\varphi(\id{a}_0)}{\rond{N}\id{a}_0},
\end{equation}
where $\id{a}=\id{a}_0\id{a}_\id{f}$, with $(\id{a}_0,\id{f})=(1)$ and $\id{a}_\id{f}\mid \id{f}$. With these notations, it follows that, if $(\id{a},\id{b})=(1)$,

\begin{equation*}
\Sigma_5(\chi,\id{a},\id{b})
= \frac{\varphi(\id{a}_0)}{\rond{N}\id{a}_0} \sum_{\id{e}\mid \id{b}} \frac{\mu(\id{e})}{\sqrt{\rond{N}(\id{e})}} \chi(\id{e}).
\end{equation*}
In particular, we obtain

\begin{equation}\label{poisson:eq:main2}
\Sigma_5(\chi_{\id{b}_1}\chi_{\id{b}_2},\id{s},\id{g})
= \frac{\varphi(\id{s})}{\rond{N}\id{s}} \sum_{\id{e}\mid \id{g}} \frac{\mu(\id{e})}{\sqrt{\rond{N}(\id{e})}} \chi_{\id{b}_1}\chi_{\id{b}_2}(\id{e}).
\end{equation}
From \eqref{poisson:eq:main2} and \eqref{eq:Parseval}, it now follows that $\sum_{B\le K}T(B,\id{g},\lambda)-T'(B,\id{g},\lambda)$ is given by

\begin{align*}
&\frac{\alpha_k}{A_k} \sqrt{M} \sumstar_{\substack{(\id{b}_1,\id{b}_2)=(1)\\ [\id{b}_1]=[\id{b}_2]=g[\id{g}^{-1}]}} \lambda_{\id{gb}_1} \barre{\lambda}_{\id{gb}_2}\frac{\varphi(\id{sb}_1\id{b}_2)}{\rond{N}\id{sb}_1\id{b}_2} \int_0^\infty W(x^2)\,dx\\
&\times \Bigg\{ \ \sumstar_{\rond{N}\id{a}\le K} \frac{\chi_{\id{b}_1,\id{b}_2}(\id{a})}{\sqrt{\rond{N}\id{a}}}\sum_{\id{e}\mid \id{g}} \frac{\mu(\id{e})}{\sqrt{\rond{N}(\id{e})}} \chi_{\id{b}_1,\id{b}_2}(\id{e})
- \sumstar_{\substack{\rond{N}\id{a}\le K\\(\id{a},\id{g})=(1)}} \frac{\chi_{\id{b}_1,\id{b}_2}(\id{a})}{\sqrt{\rond{N}\id{a}}} \frac{\varphi(\id{g})}{\rond{N}\id{g}} \Bigg\}.
\end{align*}
Using \eqref{poisson:eq:main1}, we can write the expression in the brackets as

\begin{align*}
\sum_{\substack{\id{b}\in I(\id{s})\\\rond{N} \id{b} \le K\rond{N}\id{g}}} \frac{\chi_{\id{b}_1,\id{b}_2}(\id{b})}{\sqrt{\rond{N}\id{b}}} \sum_{\substack{\id{b}=\id{hde}\\\id{h} \ec{ sq.-free}\\ (\id{h},\id{g})=(1)\\\id{d},\id{e} \mid \id{g}\\\rond{N}(\id{hd})\le K}}\mu(\id{e})
- \sum_{\substack{\id{b}\in I(\id{s})\\\rond{N} \id{b} \le K\rond{N}(\id{g})^2}} \frac{\chi_{\id{b}_1,\id{b}_2}(\id{b})}{\sqrt{\rond{N}\id{b}}} \sum_{\substack{\id{b}=\id{hde}\\\id{h} \ec{ sq.-free}\\ (\id{h},\id{g})=(1)\\\id{d},\id{e} \mid \id{g}\\\rond{N}(\id{h})\le K}}\mu(\id{e}).
\end{align*}
One sees that the contribution coming from each ideal $\id{b}$ with $\rond{N} \id{b} \le K$ is clearly $0$; this shows, as an aside, that the whole bracket is zero if $\rond{N} \id{g}=1$. One also sees that a non-trivial contribution of $\id{b}$ to one of the two inner sums occurs only for $\id{b}$ of the form $\id{b}= \id{a}\id{d}^2$, with $\id{a}$ squarefree and $\id{d} \mid \id{g}$, and that in this case, the contribution is bounded using the divisor function by $\tau(\id{b})$. Therefore,

\begin{align*}
&\sum_{B\le K} T(B,\id{g},g,\lambda)-T'(B,\id{g},g,\lambda) \\
&\ll_{k,\id{c},W} (K\rond{N}\id{g})^\varepsilon \sqrt{\frac{M}{K}} \sumstar_{\id{d}\mid \id{g}} \sumstar_{K\rond{N}(\id{g})^{-2} \le \rond{N} \id{a} \le K\rond{N}(\id{g})^2} \Big\vert \sumstar_{\substack{(\id{b}_1,\id{b}_2)=(1)\\ [\id{b}_1]=[\id{b}_2]=g[\id{g}^{-1}]}} \lambda_{\id{gb}_1} \barre{\lambda}_{\id{gb}_2}\frac{\varphi(\id{b}_1\id{b}_2)}{\rond{N}\id{b}_1\id{b}_2}\chi_{\id{b}_1}\chi_{\id{b}_2}(\id{ad^2})\Big\vert,
\end{align*}
for any $\varepsilon>0$. The conclusion follows by making a dyadic partition of the $\id{a}$-sum,  applying Lemma \ref{lemme:separation} and using Lemma \ref{lemme:increasing}.
\end{proof}

Combining  \eqref{eq:explicit}, Lemma \ref{lemme:error} and Lemma \ref{lemme:main}, we deduce Proposition \ref{prop}.



\end{document}